\documentclass[11pt,a4paper]{article}
\usepackage{graphics}
\usepackage{amsthm}
\usepackage{enumerate}
\usepackage{fullpage}
\usepackage[colorlinks,linkcolor=blue,citecolor=blue]{hyperref}

\usepackage{amsmath}
\usepackage{amsfonts}
\usepackage[linesnumbered,lined,boxed,ruled,vlined,algo2e]{algorithm2e}
\usepackage{subfig}
\usepackage[heightadjust=all,valign=c]{floatrow}

\newtheorem{thm}{Theorem}
\newtheorem{lem}[thm]{Lemma}
\newtheorem{cor}[thm]{Corollary}

\theoremstyle{definition}
\newtheorem{rmk}[thm]{Remark}

\newcommand\trans{^{\mathit t}}
\newcommand\BII[1]{\mathit{II}_{#1}}
\newcommand\BIII[1]{\mathit{III}_{#1}}
\newcommand\MI[1]{M_{\mathit{I}}(#1)}
\newcommand\MII[1]{M_{\mathit{II}}(#1)}
\newcommand\MIII[1]{M_{\mathit{III}}(#1)}
\newcommand\MIV{M_{\mathit{IV}}}
\newcommand\MV{M_{\mathit{V}}}
\newcommand\MIast[1]{M_{\mathit{I}}^*(#1)}
\newcommand\MIIast[1]{M_{\mathit{II}}^*(#1)}
\newcommand\MIIIast[1]{M_{\mathit{III}}^*(#1)}
\newcommand\MIVast{M_{\mathit{IV}}^*}
\newcommand\MVast{M_{\mathit{V}}^*}
\newcommand\MT{M_{\mathit{T}}}
\newcommand\MTast{M_{\mathit{T}}^*}
\newcommand\ForbRow{\mathcal F_{\textup{circR}}}
\newcommand\ForbRowCol{\mathcal F_{\textup{circRC}}}
\newcommand\ARowCol{A_{\textup{circRC}}}
\newcommand\size{\mathop{\textrm{size}}}
\newcommand\miop{\mathbin{\oplus}}
\newcommand\id{\mathop{\textup{id}}}

\title{Characterization and linear-time detection of minimal obstructions to concave-round graphs and the\linebreak circular-ones property}

\author{Mart\'{\i}n D.~Safe\thanks{Departamento de Matem\'atica, Universidad Nacional del Sur, Bah\'ia Blanca, Argentina. E-mail address: \texttt{msafe@uns.edu.ar}}}

\begin{document}

\maketitle

\abstract{A graph is \emph{concave-round} if its vertices can be circularly enumerated so that the closed neighbourhood of each vertex is an interval in the enumeration. In this work, we give a minimal forbidden induced subgraph characterization for the class of concave-round graphs, solving a problem posed by Bang-Jensen, Huang, and Yeo [SIAM J Discrete Math, 13:179--193, 2000]. In addition, we show that it is possible to find one such forbidden induced subgraph in linear time in any given graph that is not concave-round. As part of the analysis, we obtain characterizations by minimal forbidden submatrices for the circular-ones property for rows and for the circular-ones property for rows and columns and show that, also for both variants of the property, one of the corresponding forbidden submatrices can be found (if present) in any given matrix in linear time. We make some final remarks regarding connections to some classes of circular-arc graphs.}

\section{Introduction}

A graph is \emph{concave-round}~\cite{MR1760336} if its vertices can be circularly enumerated $v_1,v_2,\ldots,v_n$ so that for each vertex $v_i$ there are nonnegative integers $\ell$ and $r$ (dependent on $i$) such that the closed neighborhood of $v_i$ is $\{v_{i-\ell},v_{i-\ell+1},\ldots,v_{i+r}\}$, where subindices are modulo $n$. Tucker~\cite{MR0276129,MR0309810,MR0379298} was the first to study concave-round graphs and proved that these graphs form a subclass of the class of circular-arc graphs and a superclass of the class of proper circular-arc graphs\footnote{Precise definitions of these graph classes and some basic definitions are deferred to Section~\ref{sec:defs}.}. For this reason, concave-round graphs are also known as \emph{Tucker circular-arc graphs}~\cite{MR1316450,MR1425737} or \emph{$\Gamma$ circular-arc graphs}~\cite{MR1202741,MR1776376,MR3040544}. 
There are polynomial-time algorithms for graph coloring and graph isomorphism for concave-round graphs~\cite{MR1760336,MR1776376,MR3040544}, whereas no efficient algorithms for solving these problems for circular-arc graphs are known~\cite{MR578325,MR3040544}. An algorithm for constructing canonical circular-arc models for concave-round graphs was recently given in~\cite{kobler2016solving}. A graph is \emph{convex-round}~\cite{MR1760336} if its complement is concave-round. 
Concave-round and convex-round graphs were also studied in connection to cir\-cu\-lar-per\-fect\-ness~\cite{MR1905134,MR2290743}.

Although a characterization of the class of circular-arc graphs by forbidden substructures was recently obtained in~\cite{MR3451138}, the problem of finding a characterization by forbidden induced subgraphs is still open. The problem of characterizing different subclasses of circular-arc graphs (including classes of interval graphs) by forbidden induced subgraphs has received significant attention~\cite{MR1271882,MR2536884,paperNHCA,MR3191588,MR2071482,MR2765574,MR0139159,MR2428582,MR3030589,MR0252267,MR0450140,MR0276129,MR0379298,Wegner}. In this work, we solve a problem posed by Bang-Jensen, Huang, and Yeo~\cite{MR1760336} which asks for a forbidden induced subgraph characterization for concave-round graphs (and thus also for convex-round graphs).

A binary matrix has the \emph{circular-ones property for rows}~\cite{MR0309810} (resp.~\emph{con\-sec\-u\-tive-ones property for rows}~\cite{MR0190028}) if its columns can be circularly (resp.~linearly) arranged in such a way that all the $1$'s in each row occur consecutively in the arrangement. The \emph{circular-ones property for columns} (resp.~\emph{consecutive-ones property for columns}) is defined analogously by reversing the roles of rows and columns. If no mention is made to rows or columns, we mean the corresponding property for the rows. An \emph{augmented adjacency matrix} of a graph~\cite{MR0309810} is a matrix obtained from an adjacency matrix of the graph by adding $1$'s all along the main diagonal. Clearly, a graph is concave-round if and only if its augmented adjacency matrix has the circular-ones property. The class of those graphs whose augmented adjacency matrices have the consecutive-ones property is the class of proper interval graphs~\cite{MR0252267}.

Tucker~\cite{MR0295938} gave characterizations of the consecutive-ones property for rows and for the consecutive-ones property for rows and columns by minimal forbidden submatrices. The forbidden submatrices for the consecutive-ones property for rows are known as \emph{Tucker matrices}. As already observed in~\cite{DBLP:journals/jcss/DomGN10}, no analogous characterization for the circular-ones property is available in the literature. In this work, we give such an analogous characterization. More precisely, we characterize the circular-ones property by a minimal set of minimal forbidden submatrices. Moreover, we obtain an analogous characterization also for the circular-ones property for rows and columns.

Booth and Lueker~\cite{MR0433962} devised linear-time recognition algorithms for the consecutive-ones property and the circular-ones property. McConnell~\cite{MR2290966} gave a characterization of the consecutive-ones property by the absence of odd cycles in an  associated compatibility graph and devised a linear-time algorithm for detecting one such odd cycle; these odd cycles serve as certificates of the input matrix not having the consecutive-ones property and are in a format which is especially convenient for authentication~\cite{MR3447121}. A polynomial-time algorithm for finding a Tucker submatrix (if present) in any given matrix was proposed in~\cite{TamayoMScThesis}; the first linear-time algorithm for the same task was devised in~\cite{MR3447121}. Polynomial-time algorithms for the related optimization problem of finding a Tucker submatrix of minimum size in any given matrix were proposed in~\cite{MR2930997,DBLP:journals/jcss/DomGN10}. In this work, we show, by building upon the algorithm of~\cite{MR3447121}, that one of the minimal forbidden submatrices in our characterization of the circular-ones property for rows can be found (if present) in any given matrix in linear time. Moreover, we show that an analogous result holds also for the circular-ones property for rows and columns.

As observed in~\cite{MR1760336}, the fact that the circular-ones property can be recognized in linear time implies that concave-round graphs can be recognized in linear time. The algorithm in~\cite{MR2290966} mentioned in the preceding paragraph can be used to obtain a certificate that a given graph is not concave-round~\cite{MR2554791}; such a certificate consists of an odd cycle in an associated graph. For different subclasses of circular-arc graphs for which forbidden induced subgraph characterizations are known (including classes of interval graphs), the problem of finding one of these forbidden induced subgraphs in any given graph has also been studied~\cite{paperNHCA,MR2134416,MR2765574,MR3030589,MR2399252,MR3447121,MR3318808}. For instance, in~\cite{MR3318808}, such an algorithm for the class of proper circular-arc graphs was devised.  In this work, by combining our findings about concave-round graphs and the circular-ones property together with the algorithm in~\cite{MR3318808}, we show that one of the minimal forbidden induced subgraphs for the class of concave-round graphs can be found in linear time in any graph which is not concave-round.

This work is organized as follows. In Section~\ref{sec:defs}, we give some basic definitions. In Section~\ref{sec:circ1p}, we give a characterization of the circular-ones property by a minimal set of minimal forbidden submatrices and a linear-time algorithm for finding one of these forbidden submatrices in any given matrix not having the property; we also derive analogous results for the circular-ones property for rows and columns. In Section~\ref{sec:concave-round}, we give a minimal forbidden induced subgraph characterization of concave-round graphs and show that it is possible to find one of these forbidden induced subgraphs in linear time in any given graph that is not concave-round. In Section~\ref{sec:final}, we make some final remarks regarding connections to other circular-arc graphs.

\section{Basic definitions}\label{sec:defs}

For each positive integer $k$, we denote by $[k]$ the set $\{1,\ldots,k\}$ and by $\id_k$ the identity function on $[k]$. If $S$ is a set, we denote by $\vert S\vert$ its cardinality.

\subsection*{Graphs}

All graphs in this work are simple; i.e., finite, undirected, with no loops and no multiple edges. For any basic graph-theoretic notion not defined here, the reader is referred to \cite{MR1367739}. 

Let $G$ be a graph. We denote by $V(G)$ the vertex set and by $E(G)$ the edge set of $G$. Let $v\in V(G)$. The \emph{neighborhood of $v$ in $G$}, denoted by $N_G(v)$, is the set of vertices which are adjacent to $v$ in $G$. The \emph{closed neighborhood of $v$ in $G$}, denoted by $N_G[v]$, is the set $N_G(v)\cup\{v\}$. The \emph{complement of $G$}, denoted by $\overline G$, has the same vertex set as $G$ and two different vertices are adjacent in $\overline G$ if and only if they are nonadjacent in $G$. If $X\subseteq V(G)$, the subgraph of $G$ \emph{induced by} $X$ is the graph having $X$ as vertex set and whose edges are the edges of $G$ having both endpoints in $X$. A graph class is \emph{hereditary} if it is closed under taking induced subgraphs. If $X\subseteq V(G)$, we denote by $G-X$ the graph $G[V(G)-X]$.  If $H$ is a graph, we say that $G$ \emph{contains an induced $H$} or \emph{contains $H$ as an induced subgraph} if $H$ is isomorphic to some induced subgraph of $G$. If $\mathcal H$ is a set of graphs, we say that $G$ is \emph{$\mathcal H$-free} if $G$ contains no induced $H$ for any $H\in\mathcal H$. An \emph{independent set} (resp.\ \emph{clique}) \emph{of $G$} is a set of vertices of $G$ which are pairwise nonadjacent (resp.\ adjacent). We say that $G$ is bipartite if its vertex set can be partitioned into two (possibly empty) independent sets. A \emph{co-bipartite graph} is the complement of a bipartite graph.  A \emph{walk of length $k$ in $G$} is a sequence of vertices $W=v_0,v_1,\ldots,v_k$ such that $v_{i-1}$ is adjacent to $v_i$ in $G$ for each $i\in[k]$. If so, $k$ is the \emph{length of $W$}, vertices $v_0$ and $v_k$ are the \emph{endpoints of $W$}, and $v_{i-1}$ and $v_i$, for each $i\in[k]$, are the pairs of \emph{consecutive vertices of $W$}. A walk is \emph{odd} if its length is odd, and \emph{even} otherwise. A walk is \emph{closed} it its two endpoints coincide. A \emph{path} is a walk having no repeated vertices. A \emph{cycle} is a closed walk whose only pair of repeated vertices are its endpoints. A path (resp.\ cycle) in $G$ is \emph{chordless} if there is no edge in $G$ joining two nonconsecutive vertices. The graph $P_k$ (resp.\ $C_k$) is the subgraph induced by the vertices of a chordless path (resp.\ chordless cycle) on $k$ vertices.

The \emph{intersection graph} of a family of sets $\mathcal F$ is a graph having one vertex for each member of $\mathcal F$ and having an edge joining two different vertices if and only if the corresponding members of $\mathcal F$ intersect. A \emph{circular-arc graph}~\cite{MR0309810} is the intersection graph of a set of arcs on a circle; the set of arcs is called a \emph{circular-arc model} of the graph. A circular-arc model is \emph{proper} if no two arcs of the model are one a proper subset of the other. A \emph{proper circular-arc graph}~\cite{MR0309810} is a graph admitting a proper circular-arc model.

\subsection*{Matrices and configurations}

All matrices in this work are binary matrices; i.e., having only $0$ and $1$ entries. As usual, we assume that the rows and columns of a $k\times\ell$ matrix are labeled from $1$ to $k$ and from $1$ to $\ell$, respectively. By \emph{complementing row $i$ of $M$} we mean replacing, in row $i$, all $0$ entries by $1$'s and all $1$ entries by $0$'s. The \emph{complement of $M$}, denoted $\overline M$, is the matrix arising from $M$ by replacing all $0$ entries by $1$'s and all $1$ entries by $0$'s. We denote the \emph{transpose} of a matrix $M$ by $M\trans$.

The \emph{bipartite graph associated with a matrix $M$} has one vertex for each row and one vertex for each column of $M$ and its only edges are those joining the vertex corresponding to row $i$ and the vertex corresponding to column $j$ for each $(i,j)$-entry of $M$ equal to $1$. A matrix $M$ is \emph{connected} if the bipartite graph associated with $M$ is connected and the \emph{components of $M$} are the maximal connected submatrices of $M$.

The \emph{configuration}~\cite{MR0295938} of a matrix is the set of matrices that arise from it by permutations of rows and of columns. Let $M$ and $M'$ be matrices. We say that \emph{$M$ contains $M'$ as a configuration} if some submatrix of $M$ equals $M'$ up to permutations of rows and of columns. We say that $M$ and $M'$ \emph{represent the same configuration} if $M$ and $M'$ are equal up to permutations of rows and of columns; otherwise, we say that $M$ and $M'$ \emph{represent different configurations}.

\subsection*{Algorithms}

In time and space bounds, we denote by $n$ and $m$ the number of vertices and edges, respectively, of the input graph. We say that an algorithm taking a graph as input is \emph{linear-time} if it runs in $O(n+m)$ time. If $M$ is a matrix, we denote by $\size(M)$ the sum of the number of rows, the number of columns, and the number of ones of $M$. We say that an algorithm taking a matrix $M$ as input is \emph{linear-time} if it runs in $O(\size(M))$ time. We assume that input graphs are represented by adjacency lists and input matrices are represented by lists of rows, where each row is represented by a list of the columns having a $1$ in the row. This way, graphs and matrices are represented in $O(n+m)$ and $O(\size(M))$ space, respectively.

\section{Circular-ones property and forbidden submatrices}\label{sec:circ1p}

In Subsection~\ref{ssec:circR}, we give a characterization of the circular-ones property by a minimal set of minimal forbidden submatrices and we also show that one of the minimal forbidden submatrices can be found in linear time in any given matrix not having the property. Although the number of these forbidden submatrices having $k$ rows and representing different configurations grows exponentially with $k$ (see Remark~\ref{rmk:bracelets} at the end of Subsection~\ref{ssec:circR}), we give a concise description of them; this allows us to derive, in Subsection~\ref{ssec:circRC}, a minimal set of minimal forbidden submatrices for the circular-ones property for rows and columns and show that these forbidden submatrices can also be found in linear time, if present, in any given matrix. This latter set of forbidden submatrices consists of two infinite families plus ten sporadic matrices, up to permutations of rows, of columns, and transpositions. We profit from these findings in the design of Algorithm~\ref{algo:3} in Section~\ref{sec:concave-round}.

\subsection{Forbidden submatrices for the circular-ones property}\label{ssec:circR}

Below, we state Tucker's characterization of the consecutive-ones property by minimal forbidden submatrices. The forbidden submatrices known as \emph{Tucker matrices} are displayed in Figure~\ref{fig:TuckerMatrices}, where $k$ denotes the number of rows and omitted entries are zeros. Recall the definitions regarding matrices and configurations given in Section~\ref{sec:defs}.

\begin{figure}[t!]
\ffigbox[\textwidth]{%
\begin{subfloatrow}
\subfloat[$\MI k$ for each $k\geq 3$]{%
$\MI k=\left(\begin{array}{ccccc}
             1 & 1 \\
             & 1 & 1 \\
             &   & \ddots & \ddots \\
             &   &        &     1 & 1\\
           1 & 0 & \cdots &     0 & 1
         \end{array}\right)$
}\qquad
\subfloat[t][$\MII k$ for each $k\geq 4$]{%
\begin{math}
\MII k=\left(\begin{array}{cccccc}
           1  & 1  &        &        &   & 0\\
              & 1  & 1      &        &   & 0\\
              &    & \ddots & \ddots &   & \vdots\\
              &    &        &      1 & 1 & 0\\
           1  & 1  & \cdots &      1 & 0 & 1\\
           0  & 1  & \cdots &      1 & 1 & 1
         \end{array}\right)
 \end{math}
}\end{subfloatrow}

\begin{subfloatrow}
\subfloat[$\MIII k$ for each $k\geq 3$]{%
\begin{math}
\MIII k=\left(\begin{array}{cccccc}
            1  & 1  &        &        &   & 0\\
               & 1  & 1      &        &   & 0\\
               &    & \ddots & \ddots &   & \vdots\\
               &    &        &      1 & 1 & 0\\
            0  & 1  & \cdots &      1 & 0 & 1\\
         \end{array}\right)
\end{math}
}\qquad
\subfloat[$\MIV$]{%
\begin{math}
\MIV = \left(\begin{array}{cccccc}
            1 & 1 & 0 & 0 & 0 & 0\\
            0 & 0 & 1 & 1 & 0 & 0\\
            0 & 0 & 0 & 0 & 1 & 1\\
            0 & 1 & 0 & 1 & 0 & 1
          \end{array}\right)
\end{math}
}
\end{subfloatrow}

\subfloat[$\MV$]{%
\begin{math}
\MV = \left(\begin{array}{cccccc}
             1 & 1 & 0 & 0 & 0\\
             1 & 1 & 1 & 1 & 0\\
             0 & 0 & 1 & 1 & 0\\
             1 & 0 & 0 & 1 & 1
          \end{array}\right)
\end{math}
}
}{\caption{Tucker matrices, where $k$ denotes the number of rows and omitted entries are $0$'s}\label{fig:TuckerMatrices}}
\end{figure}

\begin{thm}[\cite{MR0295938}]\label{thm:Tucker-matrices} A matrix has the consecutive-ones property if and only if it contains no Tucker matrix as a configuration.\end{thm}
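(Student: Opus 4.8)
The plan is to prove the two directions separately, the forward one being essentially a verification and the converse carrying all the weight.

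\emph{Necessity.} Since having the consecutive-ones property is preserved under deleting rows or columns and under permuting rows or columns, it suffices to check that none of $\MI k$, $\MII k$, $\MIII k$, $\MIV$, $\MV$ has the property. For $\MI k$ this is immediate because its rows are exactly the edge--vertex incidences of a cycle $C_k$: in any linear order of the columns making every row consecutive the columns would have to close up into a cycle, which is impossible for $k\ge 3$. For $\MII k$ and $\MIII k$ the first $k-2$ rows force the first $k-1$ columns into a fixed order (up to reversal), and then the last two rows, together with the last column, impose a cyclic constraint that cannot be met; $\MIV$ and $\MV$ are of fixed size and are checked directly.

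\emph{Sufficiency.} Suppose $M$ contains no Tucker matrix as a configuration. Since the property holds for $M$ if and only if it holds for each component of $M$, and a Tucker configuration inside a component is one inside $M$, we may assume $M$ is connected, and we argue by induction on $\size(M)$. If the claim failed, choose a submatrix $B$ of $M$ of minimum $\size(B)$ not having the property; then every proper submatrix of $B$ has it, $B$ is connected and Tucker-free, and one checks at once that $B$ has no repeated rows or columns and no zero or all-ones row or column. It now suffices to prove that any such $B$ is, up to permutations of rows and of columns, one of the Tucker matrices: then $M$ contains a Tucker configuration, a contradiction.

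To identify $B$, fix a row $\rho$ and consider $B-\rho$, which has the consecutive-ones property; moreover the family of all column orders witnessing this for $B-\rho$ admits the usual tree-like description (e.g.\ via its PQ-tree), and the analogous statement holds for $B$ minus any single column. Because $B$ itself fails the property, the ones of $\rho$ cannot be made consecutive in any of those orders; analyzing how the ones and zeros of $\rho$ are distributed across the blocks of that tree --- combined with the symmetric analysis for a deleted column --- pins down the overlap pattern among a small set of rows and columns of $B$, and each surviving pattern turns out to be exactly one of the five Tucker families. The bulk of the work, and the main obstacle, is precisely this case analysis: one must enumerate without omission the ways $\rho$ can straddle the structure of $B-\rho$, discard every configuration that would violate connectedness, absence of duplicates, or absence of trivial lines in $B$, and verify that what remains is exactly $\MI k$, $\MII k$, $\MIII k$, $\MIV$, or $\MV$. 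Keeping this enumeration organized --- ideally by first reducing to the case in which $B$ has few rows or few columns and then bootstrapping --- is where essentially all the effort lies.
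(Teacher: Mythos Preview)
The paper does not prove this statement at all: Theorem~\ref{thm:Tucker-matrices} is quoted from Tucker's 1972 paper~\cite{MR0295938} and used as a black box throughout. There is therefore nothing in the paper to compare your argument against, only Tucker's original proof.

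As for the proposal itself, it is not a proof but a plan. The necessity direction is fine. For sufficiency, your reduction to a minimal, connected, Tucker-free matrix $B$ without the property is the right starting move and mirrors Tucker's own strategy. However, everything after that is a promissory note: you say that analyzing how the deleted row $\rho$ sits with respect to the PQ-tree of $B-\rho$ ``pins down the overlap pattern'' and that ``each surviving pattern turns out to be exactly one of the five Tucker families,'' and you explicitly concede that ``essentially all the effort lies'' in a case analysis you do not perform. That case analysis \emph{is} the theorem; Tucker's paper spends most of its length carrying it out via a careful study of how the supports of rows overlap (his ``overlap classes''), and it is neither short nor routine. Invoking the PQ-tree is anachronistic but not illegitimate; still, you would have to actually exploit its structure (P-nodes versus Q-nodes, how $\rho$'s ones can be split across siblings, etc.) and show that every minimal obstruction produced is one of $\MI k$, $\MII k$, $\MIII k$, $\MIV$, $\MV$. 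Until that enumeration is written out, what you have is an outline of a correct approach, not a proof.

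One small cautionary remark on your preliminary reductions: while zero rows, zero columns, all-ones rows, and duplicate rows can indeed be removed without affecting the consecutive-ones property, the claim that a minimal counterexample has no all-ones \emph{column} is not as immediate as you suggest (inserting such a column into a C1P ordering of the rest is not always possible at an endpoint); if you rely on this, justify it.
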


Tucker~\cite{MR0309810} showed that it was possible to reduce the problem of deciding whether a matrix has the circular-ones property to that of deciding whether some matrix obtained by complementing some of its rows has the consecutive-ones property.

\begin{thm}[\cite{MR0309810}]\label{thm:cons-circ} Let $M$ be a matrix and let $M'$ be any matrix that arises from $M$ by complementing rows in such a way that some column of $M'$ consists entirely of zeros. Thus, $M$ has the circular-ones property if and only if $M'$ has the consecutive-ones property.\end{thm}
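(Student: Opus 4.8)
The plan is to prove both directions of the equivalence by exploiting the cyclic structure that a zero column provides. First I would set up notation: say $M$ has columns $c_1,\ldots,c_\ell$ and $M'$ arises from $M$ by complementing the rows in some set $S\subseteq[k]$, with the property that some column, say column $j_0$, is all zeros in $M'$. By permuting columns (which affects neither property) I may assume $j_0 = \ell$, so that every row $i\in S$ has a $1$ in position $\ell$ in $M$ and every row $i\notin S$ has a $0$ there, and column $\ell$ of $M'$ is identically zero.

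For the forward direction, suppose $M$ has the circular-ones property, witnessed by a circular arrangement $\sigma$ of the columns in which the $1$'s of every row form a circular interval (an arc). Since column $\ell$ is all zeros in $M'$, I would ``cut'' the circle at column $\ell$: that is, take the linear order on the columns obtained by starting the circular order $\sigma$ immediately after the position of column $\ell$ and reading around until we return to column $\ell$, which ends up last. For a row $i\notin S$ (not complemented), its $1$-set in $M'$ equals its $1$-set in $M$, which is a circular arc not containing column $\ell$ (since that entry is $0$), hence an honest interval in the cut linear order. For a row $i\in S$, its $1$-set in $M$ is a circular arc that does contain column $\ell$; complementing the row replaces this arc by its complement, which is the circular arc on the remaining columns and therefore again avoids column $\ell$ and is an interval in the cut order. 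Thus $M'$ has the consecutive-ones property. The reverse direction is essentially the same argument run backwards: given a linear arrangement of the columns making $M'$ satisfy the consecutive-ones property, close it up into a circular arrangement; a row $i\notin S$ keeps an interval (hence an arc) of $1$'s, and a row $i\in S$, whose $M$-entries are the complement of its $M'$-entries, has as its $1$-set in $M$ the complement of an interval of the linear order, which becomes a circular arc once the order is closed into a circle — here one uses that complementing within a linear order and then wrapping around yields a circular interval.

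The one point that needs a little care — and the only place I expect any friction — is the bookkeeping about complementation commuting with the two operations ``form the $1$-set of a row'' and ``cut/close the cyclic order,'' together with the boundary behavior at column $\ell$: one must check that a row in $S$ really does have its $\ell$-entry equal to $1$ in $M$ (forced by the hypothesis that the $\ell$-entry is $0$ in $M'$), so that after cutting at column $\ell$ the complemented arc lands entirely in the linear part. Once that is pinned down, the equivalence is immediate. I would present this cleanly by proving the single lemma that, for a fixed column $\ell$ which is zero in $M'$, a set $T\subseteq[\ell]$ is a circular interval of $\sigma$ containing $\ell$ if and only if $[\ell]\setminus T$ is a subinterval of the linear order obtained by cutting $\sigma$ at $\ell$; the theorem then follows by applying this row by row and invoking the definition of the two properties.
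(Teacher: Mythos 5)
Your proposal is correct. Note, though, that the paper does not prove this statement at all: it is imported verbatim as Tucker's theorem (the citation \cite{MR0309810}), so there is no in-paper proof to compare against. Your cut-the-circle-at-the-zero-column argument is the standard proof of Tucker's reduction, and the one delicate point you identify (a row $i\in S$ is forced to have a $1$ in the zero column of $M'$, hence its arc in $M$ contains that column and its complement lands inside the linear part) is exactly the right thing to pin down. The only imprecision is in your closing lemma as stated: the direction ``$[\ell]\setminus T$ is an interval of the cut order $\Rightarrow$ $T$ is a circular interval containing $\ell$'' fails when $[\ell]\setminus T$ itself contains the last position (column $\ell$); you should either add the hypothesis that the interval avoids column $\ell$, or observe that in the application this is automatic because column $\ell$ of $M'$ is identically zero, so the $1$-set of every row of $M'$ avoids it.
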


If $M$ and $M'$ satisfy the first sentence of Theorem~\ref{thm:cons-circ}, we will say that $M'$ is a \emph{Tucker reduction} of $M$. Hence, the above theorem states that a matrix $M$ has the circular-ones property if and only if any Tucker reduction of it has the consecutive-ones property. 

Booth and Lueker~\cite{MR0433962} proved that the circular-ones property can be recognized in linear time by showing that: (i) the consecutive-ones property for rows can be recognized in linear time and (ii) a Tucker reduction of a matrix can be computed also in linear time. (A different linear-time recognition algorithm for the circular-ones property not depending upon Tucker reductions was given in~\cite{MR1965521}.)

\begin{thm}[\cite{MR0433962}]\label{thm:booth-and-lueker} Both the consecutive-ones property and the circular-ones property of a matrix $M$ can be decided in $O(\size(M))$ time. A Tucker reduction $M'$ of a matrix $M$ such that $\size(M')\in O(\size(M))$ can be computed in $O(\size(M))$ time.\end{thm}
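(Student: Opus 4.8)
The plan is to prove the three assertions of the theorem in the order in which they depend on one another: the consecutive-ones property for rows first, then the computation of a small Tucker reduction, and finally the circular-ones property, which will follow from the first two together with Theorem~\ref{thm:cons-circ}.

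For the \textbf{consecutive-ones property for rows} I would reconstruct the PQ-tree algorithm of Booth and Lueker. Recall that a \emph{PQ-tree} over the set of columns of $M$ is an ordered rooted tree whose leaves are the columns and whose internal nodes are labelled either \emph{P}, meaning their children may be arbitrarily permuted, or \emph{Q}, meaning the order of their children may only be reversed; such a tree encodes the set of column permutations --- its \emph{admissible orders} --- obtainable from the current leaf order by these two operations. Starting from the trivial PQ-tree consisting of a single P-node with all columns as leaves (whose admissible orders are all permutations), I would process the rows $1,2,\dots$ one at a time, and to insert row $i$ apply the \textsc{Reduce} operation, which transforms the tree into the one whose admissible orders are exactly those admissible orders of the previous tree in which the columns having a $1$ in row $i$ form a contiguous block. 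Then $M$ has the consecutive-ones property for rows if and only if no call to \textsc{Reduce} returns the null tree, and if none does, any admissible order of the final tree is a column permutation witnessing the property. The running-time bound is the delicate point: a single \textsc{Reduce} may rearrange a large \emph{pertinent} subtree, so one cannot charge its cost to that step alone; instead, following Booth and Lueker, both the bottom-up ``bubbling'' pass and the constant-size template replacements are charged to the $1$'s of the current row and to structure that is permanently destroyed (P-nodes being split, children being absorbed into Q-nodes), which yields an amortized total of $O(\size(M))$ over the at most $\size(M)$ insertions. I expect setting up this amortized accounting --- exhibiting the potential and verifying the charging for each of the finitely many templates --- to be the main obstacle, exactly as in the original proof; one could alternatively invoke a partition-refinement-based recognition procedure, but the PQ-tree argument is the one native to the cited result.

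For the \textbf{Tucker reduction}, let $M$ have $k$ rows, $\ell$ columns and a total of $z$ ones; we may assume $\ell\geq1$ and $z\geq1$, since otherwise $M$ already has both properties and the empty reduction works. Let $d_j$ be the number of $1$'s in column $j$, pick a column $c$ with $d_c$ minimum, and let $M'$ be obtained from $M$ by complementing every row that has a $1$ in column $c$. Then column $c$ of $M'$ is entirely $0$, so $M'$ is a Tucker reduction of $M$. Identifying $c$ and the set of rows to complement takes $O(\size(M))$ time, and each complemented row is rewritten, using a length-$\ell$ scratch array, as the list of columns in which it has a $0$, at cost $O(\ell)$ per such row. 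Since $\ell d_c\leq\sum_{j=1}^{\ell}d_j=z\leq\size(M)$, only $O(\size(M))$ new $1$'s are created, the complementation runs in $O(\size(M))$ time, and $\size(M')\leq k+\ell+2z=O(\size(M))$. Finally, the \textbf{circular-ones property} of $M$ is decided by computing this Tucker reduction $M'$ and testing whether $M'$ has the consecutive-ones property for rows; this is correct by Theorem~\ref{thm:cons-circ} and, by the two previous parts, runs in $O(\size(M'))=O(\size(M))$ time.
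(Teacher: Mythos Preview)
The paper does not prove this theorem at all: it is quoted as a result from \cite{MR0433962}, and the surrounding text merely summarizes the approach (``Booth and Lueker proved that the circular-ones property can be recognized in linear time by showing that: (i) the consecutive-ones property for rows can be recognized in linear time and (ii) a Tucker reduction of a matrix can be computed also in linear time''). Your proposal is a faithful reconstruction of exactly that strategy --- PQ-trees for the consecutive-ones part, the minimum-column trick for a size-bounded Tucker reduction, and Theorem~\ref{thm:cons-circ} to reduce the circular-ones test to the consecutive-ones test --- and the key size estimate $\ell d_c\le\sum_j d_j=z$ is correct, so your $\size(M')\le k+\ell+2z$ bound and the overall $O(\size(M))$ time bound are valid.
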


Recently, Lindzey and McConnell~\cite{MR3447121} gave a linear-time algorithm which finds a Tucker matrix in any given matrix not having the consecutive-ones property.

\begin{thm}[\cite{MR3447121}]\label{thm:Lindzey-McConnell} If a matrix $M$ does not have the consecutives-ones property, then a Tucker matrix contained in $M$ as a configuration can be found in $O(\size(M))$ time.\end{thm}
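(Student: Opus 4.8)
The plan is to reduce the task to that of finding a \emph{minimal obstruction}: a submatrix of $M$ that fails to have the consecutive-ones property (C1P) but that acquires it upon deletion of any single row or any single column. By Theorem~\ref{thm:Tucker-matrices}, every minimal obstruction equals one of the Tucker matrices up to permutations of rows and of columns --- it contains a Tucker matrix as a configuration by Theorem~\ref{thm:Tucker-matrices}, and were that configuration a proper submatrix it would already be non-C1P, contradicting minimality --- so it suffices to output a minimal obstruction in $O(\size(M))$ time. One may also assume $M$ is connected, since $M$ has the C1P if and only if each of its components does, the components are computable in $O(\size(M))$ time, and a non-C1P component is itself a submatrix of $M$ of no larger size.

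First I would pin down a concretely described point of failure. Running the Booth--Lueker algorithm (Theorem~\ref{thm:booth-and-lueker}) incrementally, insert the rows $r_1,r_2,\dots$ of $M$ one at a time, maintaining the PQ-tree $T_i$ of all column orderings compatible with $\{r_1,\dots,r_i\}$ together with a bounded amount of bookkeeping on each node recording which already-inserted rows ``witness'' the current shape of that node (why a Q-node has its particular child order, why two children of a P-node are grouped together, and so on). Since each template application creates $O(1)$ new nodes and is triggered by the row currently being inserted, this bookkeeping costs no more asymptotically than the recognition itself. Let $p$ be the least index at which insertion fails; it exists because $M$ is not C1P, and $\{r_1,\dots,r_{p-1}\}$ is C1P. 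Template matching then fails at a single node $x$ of $T_{p-1}$, and the finite case analysis of the PQ-tree templates shows the failure is caused by a configuration of \emph{bounded description}: a constant number of children of $x$ whose full/empty/partial status with respect to the $1$-set of $r_p$ cannot be reconciled, joined --- in the cases producing the unbounded families $\MI k$, $\MII k$, $\MIII k$ --- by a chain of consecutive children of a Q-node whose forced relative order is certified by a sequence of previously inserted rows.

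Next I would harvest a minimal obstruction directly from this local picture. For each implicated child I keep $O(1)$ of its leaves (columns) certifying its status, I keep $O(1)$ of its witnessing rows certifying the relevant portion of $T_{p-1}$, and I always keep $r_p$; the number of rows and columns kept is proportional to the length of the implicated chain --- hence $O(\size(M))$ --- and the harvesting visits only that chain, so it runs in time proportional to its own output. The submatrix $N$ obtained this way is non-C1P by construction, and with parsimonious harvesting it is already minimal in the unbounded cases (or becomes so after $O(1)$ deletions at each end of the chain), while in the remaining cases $N$ has $O(1)$ rows and columns and can be made minimal by brute force; either way a minimal obstruction $N'$ results in $O(\size(N))$ time. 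Finally, $N'$ equals one of $\MI k$, $\MII k$, $\MIII k$, $\MIV$, $\MV$ up to permutations; which matrix (and the value of $k$) is read off from the dimensions and row sums of $N'$ in $O(\size(N'))$ time, and $N'$ is returned.

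The technical heart --- and the main obstacle --- is the passage from ``the algorithm fails at $x$'' to ``a minimal obstruction can be harvested in output-linear time.'' One must (i) enumerate the finitely many ways template matching can fail and, in each, pin down exactly which columns and which earlier rows are needed; (ii) prove that the $O(1)$-per-node bookkeeping really does single out rows that force the implicated portion of $T_{p-1}$, so the harvested $N$ is non-C1P and (essentially) minimal; and (iii) perform the harvesting and any residual trimming without ever running a full consecutive-ones test per deleted row or column --- a genuine concern, since the true obstruction can have size $\Theta(\size(M))$ (for instance when $M$ equals $\MI k$), so a test-per-deletion minimalization would cost $\Theta(\size(M)^2)$. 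For the unbounded families, an alternative to (i)--(iii) is to recast $\MI k$, $\MII k$, $\MIII k$ as shortest-cycle and shortest-path problems in an auxiliary graph built from the overlap structure of the rows and solve them by breadth-first search in $O(\size(M))$ time, leaving only the bounded families to the local analysis above.
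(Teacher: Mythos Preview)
The paper does not prove this theorem at all: it is quoted from \cite{MR3447121} and used as a black box (see the line immediately following the statement, where the theorem is attributed to Lindzey and McConnell). There is therefore no ``paper's own proof'' to compare against.

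As to the proposal itself: what you have written is a plausible high-level outline of the Lindzey--McConnell strategy (incremental PQ-tree construction, localize the first failure, harvest witnesses), but it is explicitly a plan rather than a proof. You yourself flag the three hard parts (i)--(iii) --- enumerating all failure modes, proving the bookkeeping actually pins down a minimal obstruction, and avoiding a quadratic minimalization pass --- without resolving any of them. Each of these is substantial; in particular, the claim that ``with parsimonious harvesting it is already minimal in the unbounded cases (or becomes so after $O(1)$ deletions at each end of the chain)'' is doing a great deal of unproven work, and the actual paper \cite{MR3447121} devotes many pages to precisely this. So the proposal is not wrong in spirit, but it is not a proof either: it is a roadmap that correctly identifies where the difficulties lie and then stops.
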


Our analysis of the circular-ones property relies on Tucker reductions. Clearly, by combining Theorems~\ref{thm:Tucker-matrices} and \ref{thm:cons-circ}, one obtains a characterization of the circular-ones property by forbidden submatrices of any Tucker reduction of it. However, our interest is on characterizing the circular-ones property by forbidden submatrices of the original matrix. In fact, the main result of this subsection (Theorem~\ref{thm:circR}) gives a minimal set of such minimal forbidden submatrices. In this way, we obtain an analogue of Theorem~\ref{thm:Tucker-matrices} for the circular-ones property; that such an analogue is not available in the literature was observed by Dom, Guo, and Niedermeier in~\cite{DBLP:journals/jcss/DomGN10}. In that work, they gave a condition in terms of a finite set of forbidden Tucker submatrices, which is sufficient for the circular-ones property to hold for every component of any matrix having a bounded number of ones per row. Nevertheless, their condition is neither necessary nor sufficient for the whole matrix to have the circular-ones property.

In order to state our characterization of the circular-ones property, we need some definitions. Let $a=a_1a_2\ldots a_k$ be a binary sequence of length $k$. We call the \emph{shift of $a$} to the sequence $a_2a_3\ldots a_ka_1$ and the \emph{reversal of $a$} to the sequence $a_ka_{k-1}\ldots a_1$. A \emph{binary bracelet}~\cite{MR1857399} is a lexicographically smallest element in an equivalence class of binary sequences under shifts and reversals. For each $k\geq 4$, let $A_k$ be the set of binary bracelets of length $k$. Let $A_3=\{000,111\}$. The elements of $A_3$ are binary bracelets but the two other binary bracelets of length $3$ ($001$ and $011$) do not belong to $A_3$. If $\phi:[k']\to[k]$ is an injective function, where $k'$ is a positive integer, we denote by $a_\phi$ the sequence $a_{\phi(1)},\ldots,a_{\phi(k')}$. For instance, the shift of $a$ is the sequence $a_\pi$, where $\pi$ is the \emph{shift permutation of $[k]$} defined as the function $\pi:[k]\to[k]$ such that $\pi(1)=2$, $\pi(2)=3$, \ldots, $\pi(k-1)=k$, and $\pi(k)=1$. Similarly, the reversal of $a$ is $a_\pi$, where $\pi$ is the \emph{reversal permutation of $[k]$} defined as the function $\pi:[k]\to[k]$ such that $\pi(1)=k$, $\pi(2)=k-1$, \ldots, and $\pi(k)=1$. Booth~\cite{MR585391} gave the first linear-time algorithm for computing the lexicographically smallest sequence that arises by repeatedly applying shifts to a given sequence. It immediately implies the following.

\begin{thm}[\cite{MR585391}]\label{thm:Booth} Given a binary sequence $a$ of length $k$, a permutation $\pi$ of $[k]$ such that $a_\pi$ is a bracelet and $\pi$ is a composition of shift and reversal permutations can be found in $O(k)$ time.\end{thm}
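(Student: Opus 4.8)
The plan is to obtain the statement as an immediate consequence of Booth's least-rotation algorithm~\cite{MR585391}, applied twice: once to $a$ and once to its reversal.

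First I would record the elementary group-theoretic observation underlying the argument. Write $\sigma$ and $\rho$ for the shift and the reversal permutations of $[k]$, respectively. Since $(a_\phi)_\psi=a_{\phi\circ\psi}$ for all permutations $\phi,\psi$ of $[k]$, the sequences obtainable from $a$ by repeatedly applying shifts and reversals are exactly those of the form $a_\tau$ with $\tau$ in the group $\langle\sigma,\rho\rangle$ generated by $\sigma$ and $\rho$. A direct check gives $\rho\circ\sigma\circ\rho=\sigma^{-1}$, so $\langle\sigma,\rho\rangle$ is the dihedral group of order $2k$, and every one of its elements is either a power $\sigma^i$ of the shift permutation or of the form $\rho\circ\sigma^j$; in either case it is a composition of shift and reversal permutations. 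Hence the equivalence class of $a$ under shifts and reversals equals $O_+\cup O_-$, where $O_+=\{a_{\sigma^i}:0\le i<k\}$ is the set of rotations of $a$ and $O_-=\{a_{\rho\circ\sigma^j}:0\le j<k\}=\{(a_\rho)_{\sigma^j}:0\le j<k\}$ is the set of rotations of the reversal $a_\rho$. Consequently the bracelet in this class is the lexicographic minimum of $O_+\cup O_-$, and a witnessing permutation may be chosen of the form $\sigma^i$ or $\rho\circ\sigma^j$.

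Next I would invoke Booth's algorithm~\cite{MR585391}: given a binary sequence of length $k$, it computes in $O(k)$ time an index $i$ (say, the smallest one) for which the rotation by $i$ is lexicographically least among all rotations. Running it on $a$ yields $\min O_+=a_{\sigma^i}$ together with the power $\sigma^i$. Forming the reversal $a_\rho$ takes $O(k)$ time; running Booth's algorithm on $a_\rho$ yields $\min O_-=(a_\rho)_{\sigma^j}=a_{\rho\circ\sigma^j}$ together with $j$, hence with the permutation $\rho\circ\sigma^j$. Comparing the two length-$k$ candidates $a_{\sigma^i}$ and $a_{\rho\circ\sigma^j}$ lexicographically takes a further $O(k)$ time; returning the smaller of the two together with its associated permutation gives a permutation $\pi\in\{\sigma^i,\rho\circ\sigma^j\}$, a composition of shift and reversal permutations, with $a_\pi$ a bracelet. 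The total running time is $O(k)$.

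I expect no substantial obstacle here; the only points requiring care are purely a matter of bookkeeping, namely that Booth's algorithm is naturally formulated so as to return the index of the least rotation (and not merely the rotated sequence), so that the permutation $\pi$ is recovered for free, and that the composition conventions for the $a_\phi$ notation are applied consistently, so that the reported $\pi$, evaluated on $a$, reproduces the chosen candidate. With those conventions fixed, the statement follows at once.
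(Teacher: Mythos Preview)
Your proposal is correct and matches the paper's approach: the paper does not give a detailed proof but merely remarks that Booth's linear-time least-rotation algorithm ``immediately implies'' the theorem, and your argument is precisely the natural way to spell out that implication (run Booth on $a$ and on its reversal, then compare).
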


Let $M$ be a $k\times\ell$ matrix. If $a=a_1a_2\ldots a_k$ is a binary sequence of length $k$, we denote by $a\miop M$ the matrix that arises from $M$ by complementing those rows $i\in\{1,\ldots k\}$ such that $a_i=1$. We denote by $M^*$ the $k\times(\ell+1)$ matrix that arises from $M$ by adding one last column consisting entirely of $0$'s. 
We denote $(\MI k)^*$ by $\MIast k$; analogous conventions apply for the remaining Tucker matrices. 
We say a matrix $M$ is a \emph{minimal forbidden submatrix for the circular-ones property} if $M$ does not have the circular-ones property but every submatrix of $M$ different from $M$ has the circular-ones property. Below, we state our characterization of the circular-ones property in terms of the following set of minimal forbidden submatrices:
\[ \ForbRow=\{a\miop\MIast k:k\geq 3\mbox{ and }a\in A_k\}\cup\{\MIV,\overline{\MIV},\MVast,\overline{\MVast}\}. \]

\begin{thm}\label{thm:circR} A matrix $M$ has the circular-ones property if and only if $M$ contains no matrix in the set $\ForbRow$ as a configuration. Moreover, $\ForbRow$ is a minimal set having this property and the matrices in the set $\ForbRow$ are minimal forbidden submatrices for the circular-ones property.\end{thm}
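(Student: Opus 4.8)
The plan is to prove that, up to permutations of rows and of columns, $\ForbRow$ is exactly the set of all minimal forbidden submatrices for the circular-ones property; all three assertions of Theorem~\ref{thm:circR} then follow at once. First I would record two elementary facts: the circular-ones property is inherited by submatrices (restrict the circular column order to the retained columns; a cyclic arc stays a cyclic arc after deletions), and it is invariant under complementing rows (the complement of an arc of a circle is again an arc). Using these together with Theorems~\ref{thm:Tucker-matrices} and~\ref{thm:cons-circ}, every matrix of $\ForbRow$ lacks the circular-ones property: $\MIast k$ has an all-zero column, hence equals one of its own Tucker reductions, and it lacks the consecutive-ones property because it contains the Tucker matrix $\MI k$, so each $a\miop\MIast k$ also lacks the circular-ones property by invariance under row complementation; the same reasoning applies to $\MVast$ and $\MV$; that $\MIV$ lacks the property is a finite check; and $\overline{\MIV}$, $\overline{\MVast}$ follow since complementing a matrix amounts to complementing all of its rows. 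Together with heredity, this already yields one implication of the characterization: a matrix containing a configuration of $\ForbRow$ cannot have the circular-ones property.

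For the converse, let $M$ lack the circular-ones property and let $M'$ be a Tucker reduction of $M$ (one always exists: complement every row having a $1$ in some fixed column). By Theorem~\ref{thm:cons-circ}, $M'$ lacks the consecutive-ones property, so by Theorem~\ref{thm:Tucker-matrices} it contains a Tucker matrix $T$ as a configuration, on some rows $R$ and columns $C$. Since $M'$ has an all-zero column $c_0$ while no Tucker matrix does, $c_0\notin C$, and therefore the submatrix of $M'$ on rows $R$ and columns $C\cup\{c_0\}$ represents the configuration $T^*$, one of $\MIast k,\MIIast k,\MIIIast k,\MIVast,\MVast$. Writing $M'=p\miop M$ for the complementing pattern $p$ of the reduction and letting $b$ be the restriction of $p$ to $R$, the corresponding submatrix of $M$ represents $b\miop T^*$. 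Hence $M$ contains $b\miop T^*$ as a configuration for some binary vector $b$ and some Tucker matrix $T$, and the crux becomes the claim: \emph{for every binary vector $b$ and every Tucker matrix $T$, the matrix $b\miop T^*$ contains a configuration of $\ForbRow$.}

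I would prove this Tucker matrix by Tucker matrix. For $T=\MI k$ with $k\geq4$: the dihedral automorphisms of the cycle $\MI k$, extended to fix the all-zero column, realise all shifts and reversals of $b$ acting on the rows, so $b\miop\MIast k$ represents $a\miop\MIast k$ where $a\in A_k$ is the bracelet of $b$ (located via Theorem~\ref{thm:Booth}). For $T=\MII k$: I expect that complementing the two non-staircase rows of $\MIIast k$ turns the staircase path, together with those two rows, into a $k$-cycle through the former all-zero column while one former column becomes all-zero, exhibiting $\MIIast k$ as the configuration $\beta\miop\MIast k$ for $\beta$ the indicator of two cyclically consecutive rows; likewise, complementing the single non-staircase row of $\MIII k$ closes its staircase path into a $k$-cycle and frees a column, so $\MIII k$ itself represents $(0\cdots01)\miop\MIast k$, and $\MIIIast k$ contains it. In those two cases $b\miop T^*$ then represents $c\miop\MIast k$ for a suitable $c$, which one replaces by its bracelet in $A_k$ as before. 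The cases $T=\MIV$ and $T=\MV$ are finite ($16$ patterns $b$ each): one checks that $b\miop\MIVast$, respectively $b\miop\MVast$, contains $\MIV$, $\overline{\MIV}$, $\MVast$, $\overline{\MVast}$, or a small matrix of the $\MIast k$ family; for instance $0\cdots0\miop\MIVast$ contains $\MIV$, $1\cdots1\miop\MIVast$ contains $\overline{\MIV}$, while $0\cdots0\miop\MVast=\MVast$ and $1\cdots1\miop\MVast=\overline{\MVast}$. Finally $k=3$ is finite as well: $b\miop\MIast 3$ and $b\miop\MIIIast 3$ are checked directly to contain $\MIast 3$ or $\overline{\MIast 3}$ — this is precisely why $A_3$ is taken to be $\{000,111\}$ rather than all four bracelets of length $3$, the other two producing no further configuration.

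It remains to prove that each member of $\ForbRow$ is a minimal forbidden submatrix and that the members represent pairwise distinct configurations; these together give both the minimality of each member and the minimality of the set (dropping a member $F$ leaves $F$ itself as a matrix without the circular-ones property that contains no remaining member, since a minimal forbidden submatrix cannot properly contain another). For $a\miop\MIast k$, invariance under row complementation reduces minimality to checking that every proper submatrix of $\MIast k$ has the circular-ones property: deleting the all-zero column yields $\MI k$ (circular-ones with the natural cyclic order), while deleting any row or any other column breaks the cycle into a path and yields even the consecutive-ones property; the four sporadic matrices are finite checks. For distinctness: different $k$ give different numbers of rows; the $\MIast k$ family is disjoint from $\{\MIV,\overline{\MIV},\MVast,\overline{\MVast}\}$ by counting rows and columns; the four sporadic matrices have pairwise different numbers of ones; and for fixed $k$, distinct $a,a'\in A_k$ give distinct configurations — for $k=3$ because $\MIast 3$ and $\overline{\MIast 3}$ are distinguished by having an all-zero column, and for $k\geq4$ because an isomorphism $a\miop\MIast k\to a'\miop\MIast k$ would pull back to a permutation turning $\MIast k$ into $d\miop\MIast k$ with $d=\rho(a)\oplus a'$, and since row-permutations preserve the number of ones ($2k$) and $\MIast k$ has a unique all-zero column, one forces $d=0$, so $a'$ is a shift or reversal of $a$ and hence $a'=a$. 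I expect the main obstacles to be the uniform treatment of $\MIIast k$ and $\MIIIast k$ in the claim — getting the index arithmetic of the ``close the path into a cycle'' step right and identifying the resulting bracelet class — and, for distinctness when $k\geq4$, pinning down exactly how much symmetry $a\miop\MIast k$ retains.
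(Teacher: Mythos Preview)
Your proposal is correct and follows essentially the same approach as the paper's proof: both argue via a Tucker reduction, pull back a Tucker submatrix together with an all-zero column to obtain $b\miop T^*$ inside $M$, and then reduce each such matrix to a member of $\ForbRow$ by the row-complementation identities for $\MII$ and $\MIII$ (the paper's Lemma~\ref{lem:DGN}), dihedral normalisation to a bracelet (Lemma~\ref{lem:a=a'-mod-rot&perm}), and a finite classification of $a\miop\MVast$ (Lemma~\ref{lem:MVast}); the minimality and pairwise-distinctness arguments you sketch are likewise those of Lemmas~\ref{lem:lemmaA}, \ref{lem:MIast3}, and~\ref{lem:MIastgeq4}. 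The only minor divergence is that the paper never needs to examine $b\miop\MIVast$ case by case: it observes once that $\MIV$ is itself a column-permutation of $0100\miop\MVast$, so the $\MIV$ case immediately reduces to the $\MVast$ classification, yielding one of the four sporadic matrices directly rather than via a sixteen-case check.
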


The proof is given near the end of this subsection after some results. We need to introduce some more terminology. Let $a=a_1\ldots a_k$ and $a'=a'_1\ldots a'_k$ be two binary sequences of the same length $k$. We denote by $a+a'$ the binary sequence $(a_1+a'_1)\ldots(a_k+a'_k)$ of length $k$ where sums are taken modulo $2$. Clearly, $a\miop(a'\miop M)=(a+a')\miop M$ for each matrix $M$ having $k$ rows.

Let $M$ be a $k\times\ell$ matrix. A \emph{row map of $M$} is an injective function $\rho:[k']\to[k]$ for some positive integer $k'$. A \emph{column map of $M$} is an injective function $\sigma:[\ell']\to[\ell]$ for some positive integer $\ell'$. If $\rho:[k']\to[k]$ is a row map of $M$ and $\sigma:[\ell']\to[\ell]$ is a column map of $M$, we denote by $M_{\rho,\sigma}$ the $k'\times\ell'$ binary matrix such that, for each $(i,j)\in [k']\times[\ell']$, its $(i,j)$-entry is the $(\rho(i),\sigma(j))$-entry of $M$. Notice that $M$ contains $M'$ as a configuration if and only if there is a row map $\rho$ and column map $\sigma$ of $M$ such that $M_{\rho,\sigma}=M'$. If, in addition, $a$ is a binary sequence whose length equals the number of rows of $M$, then $(a\miop M)_{\rho,\sigma}=a_\rho\miop M_{\rho,\sigma}$. It is also clear that, if $\rho$ and $\sigma$ are a row map and a column map of $M$ and $\rho'$ and $\sigma'$ are a row map and a column map of $M_{\rho,\sigma}$, then $\rho'\circ\rho$ and $\sigma'\circ\sigma$ are a row map and a column map of $M$, and $M_{\rho'\circ\rho,\sigma'\circ\sigma}=((M_{\rho,\sigma})_{\rho',\sigma'}$. If $s$ is a positive integer and $n_1,n_2,\ldots,n_s$ are pairwise different positive integers, we denote by $\langle n_1,n_2,\ldots,n_s\rangle$ the injective function with domain $[s]$ that transforms $i$ into $n_i$ for each $i\in[s]$. If $\pi$ is a permutation of $[k]$, we denote by $\pi^\ast$ the permutation of $[k+1]$ that coincides with $\pi$ in each element of $[k]$ 
(and thus leaves $k+1$ fixed).

In the proof of the main structural result in~\cite{DBLP:journals/jcss/DomGN10}, it was observed if some of the rows of $\MVast$ are complemented, the resulting matrix contains some Tucker matrix as a configuration. For our purposes, we need a complete classification of such resulting matrices up to permutations of rows and of columns, which we will obtain in Lemma~\ref{lem:MVast}. The aforesaid proof in~\cite{DBLP:journals/jcss/DomGN10} also contains two claims which we state in the lemma below. We will profit from these connections among Tucker matrices in the proof of Lemma~\ref{lem:lemmaB}.

\begin{lem}[\cite{DBLP:journals/jcss/DomGN10}]\label{lem:DGN} For each $k\geq 4$, $00\ldots011\miop\MIIast k_{\id_k,\langle 1,\ldots,k-1,k+1,k\rangle}=\MIast k$ and, for each $k\geq 3$, $00\ldots 01\miop\MIII k=\MIast k$.\end{lem}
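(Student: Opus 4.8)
The statement asserts two explicit identities between matrices obtained from Tucker matrices by complementing rows and permuting columns. The plan is to verify each identity by direct inspection of the entries, using the structural descriptions of $\MI k$, $\MII k$, and $\MIII k$ given in Figure~\ref{fig:TuckerMatrices} together with the definitions of the operators $a\miop(\cdot)$, $(\cdot)^\ast$, and $M_{\rho,\sigma}$.

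For the second identity, I would start with $\MIII k$, which is a $k\times(k+1)$ matrix: its first $k-1$ rows each contain two consecutive $1$'s (in columns $i,i+1$ for row $i$), its $k$-th row is $0\,1\,1\,\cdots\,1\,0\,1$, and its last column has a single $1$ in the last row. Complementing only the last row (the sequence $00\ldots01$ of length $k$) turns that last row into $1\,0\,0\,\cdots\,0\,1\,0$. I would then observe that the resulting matrix, now with rows $(1,2),(2,3),\ldots,(k-1,k)$ for the first $k-1$ rows and $(1,k)$ for the last row, and with the all-zero last column $k+1$ removed effectively because column $k+1$ now reads $0\,0\,\cdots\,0\,1$ — wait, one must be careful here: complementing the last row flips its entry in column $k+1$ from $1$ to $0$, so column $k+1$ becomes all zeros. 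Hence $00\ldots01\miop\MIII k$ has its last column identically zero and its first $k$ columns forming exactly $\MI k$ (rows $(i,i+1)$ for $i<k$ and $(1,k)$ for $i=k$). Since $\MIast k = (\MI k)^\ast$ is precisely $\MI k$ with an appended all-zero column, the identity $00\ldots01\miop\MIII k=\MIast k$ follows. The only subtlety to check is that the column positions line up so that the appended zero column of $\MIast k$ is literally column $k+1$; this is immediate from the definition of $(\cdot)^\ast$.

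For the first identity, I would perform the analogous but slightly more involved bookkeeping. Here $\MII k$ is $(k+1)\times(k+1)$: rows $1,\ldots,k-1$ have consecutive $1$'s in columns $i,i+1$; row $k$ is $1\,1\,\cdots\,1\,0\,1$; row $k+1$ is $0\,1\,\cdots\,1\,1\,1$; and again column $k+1$ is supported only on the last two rows. First I would apply the column map $\langle 1,\ldots,k-1,k+1,k\rangle$, which swaps columns $k$ and $k+1$, and keep all $k+1$ rows (the row map being $\id_k$ in the statement — note this is $\id$ on the row index set, which for $\MIIast k$ has $k$ rows... here I should double-check the dimension conventions in the source, since $\MII k$ as drawn has $k+1$ rows, but the index in $\id_k$ and the claimed target $\MIast k$ with $k$ rows suggests a reindexing; I would align this with how the paper counts rows of $\MII k$, treating "$k$ rows" as the parameter rather than the literal count). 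After swapping columns $k$ and $k+1$ and then complementing the last two rows (the sequence $00\ldots011$), row $k$ becomes $1\,1\,\cdots\,1\,1\,0$ in the old column order, i.e. after the swap its column-$k+1$ slot (originally the $0$ before the trailing $1$) and its column-$k$ slot get rearranged so that the row reads as an all-ones-prefix row, and row $k+1$ becomes $1\,0\,\cdots\,0\,0\,0$. The net effect should again collapse the extra rows/columns down to $\MI k$ with an appended zero column, yielding $\MIast k$.

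The main obstacle is not conceptual but notational: keeping the three simultaneous operations (row complementation, the specific column transposition $\langle 1,\ldots,k-1,k+1,k\rangle$, and the $(\cdot)^\ast$ padding) consistent with the paper's indexing of $\MII k$ and $\MIII k$, and making sure the "lexicographically smallest / appended column" conventions place the all-zero column in the right spot. Since the paper cites this as a known fact from~\cite{DBLP:journals/jcss/DomGN10}, I expect the intended proof is exactly this kind of case-by-case entry check, and I would present it by exhibiting the two matrices side by side (before and after the operations) rather than by a long symbolic manipulation; a reader can then confirm equality by inspection.
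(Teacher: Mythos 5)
The paper does not actually prove this lemma --- it is imported verbatim from the cited reference --- so a direct entry-by-entry verification is exactly the right thing to supply, and your treatment of the second identity is correct and complete: complementing the last row of $\MIII{k}$ turns $0\,1\cdots1\,0\,1$ into $1\,0\cdots0\,1\,0$, the last column becomes all zeros, and what remains is $\MI{k}$ with an appended zero column, i.e.\ $\MIast{k}$.

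Your verification of the first identity, however, does not go through as written. The figure's caption fixes the convention that $k$ is the number of rows, so $\MII{k}$ is $k\times k$ (staircase rows $1,\dots,k-2$ with $1$'s in columns $i$ and $i{+}1$; row $k-1$ equal to $1\cdots1\,0\,1$; row $k$ equal to $0\,1\cdots1\,1$), not $(k{+}1)\times(k{+}1)$ as you assume; consequently the sequence $00\ldots011$ complements rows $k-1$ and $k$, and the column map swaps column $k$ of $\MIIast{k}$ (the column carrying the two bottom-right $1$'s) with the appended zero column $k+1$. More importantly, your description of the outcome is wrong: after appending the zero column, swapping those two columns, and complementing, row $k-1$ becomes $0\cdots0\,1\,1\,0$ (ones in columns $k-1$ and $k$) and row $k$ becomes $1\,0\cdots0\,1\,0$ (ones in columns $1$ and $k$). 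A row of the form $1\,1\cdots1\,1\,0$, as you claim for one of them, cannot occur, since every row of the target $\MIast{k}$ has exactly two ones. The correct bookkeeping does yield $\MIast{k}$, so the identity is true and your overall strategy is sound, but the first half of the verification needs to be redone with the right dimensions before it counts as a proof.
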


The two lemmas below imply that all the matrices in the set $\ForbRow$ are minimal forbidden submatrices for the circular-ones property.

\begin{lem}\label{lem:lemmaA} Let $M=\MIast k$ for some $k\geq 3$ or $M=\MVast$. If $a$ is any binary sequence of length equal to the number of rows of $M$, then $a\miop M$ is a minimal forbidden submatrix for the circular-ones property.\end{lem}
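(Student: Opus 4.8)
The plan is to prove two things for $M=\MIast k$ (with $k\geq 3$) or $M=\MVast$: first, that $a\miop M$ does not have the circular-ones property; second, that every proper submatrix of $a\miop M$ does. For the first part, I would use Theorem~\ref{thm:cons-circ}. Observe that $M$ has a column of all zeros (the last column, by construction of $N^*$), so $M$ is its own Tucker reduction, and therefore $M$ has the circular-ones property if and only if $M$ has the consecutive-ones property; since $\MI k$ and $\MV$ are Tucker matrices, neither $\MIast k$ nor $\MVast$ has the consecutive-ones property, hence neither has the circular-ones property. For $a\miop M$, the key point is that complementing rows does not change whether a matrix has the circular-ones property: indeed, if $b\miop N$ has the circular-ones property witnessed by a circular arrangement of columns, then $N$ has the circular-ones property with the same arrangement (complementing a row turns one circular interval of $1$'s into its complementary circular interval). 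Applying this with $b=a$ and $N=a\miop M$, and using $a\miop(a\miop M)=M$, we conclude that if $a\miop M$ had the circular-ones property then so would $M$, a contradiction. Hence $a\miop M$ does not have the circular-ones property.

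For minimality, let $N=a\miop M$ and let $N'$ be any proper submatrix of $N$, say $N'=N_{\rho,\sigma}$ for a row map $\rho$ and column map $\sigma$ that are not both the identity; I must show $N'$ has the circular-ones property. By the same row-complementation invariance just used (formally, $(a\miop M)_{\rho,\sigma}=a_\rho\miop M_{\rho,\sigma}$, so it suffices to show $M_{\rho,\sigma}$ has the circular-ones property), it is enough to prove that every proper submatrix of $M$ itself has the circular-ones property. Here I split into the two cases. For $M=\MVast$: if the omitted index is a deleted row or the deleted column is the all-zero column, the resulting matrix is a submatrix of $\MV$ or equals $\MV$ up to the harmless all-zero column, and I would use the well-known fact that $\MV$ is a \emph{minimal} forbidden configuration for the consecutive-ones property (part of Tucker's analysis underlying Theorem~\ref{thm:Tucker-matrices}) — every proper submatrix of $\MV$ has the consecutive-ones property, hence the circular-ones property; if instead a non-final column of $\MVast$ is deleted, then the resulting matrix still has an all-zero column, so it equals its own Tucker reduction, and the corresponding submatrix of $\MV$ has the consecutive-ones property by minimality of $\MV$, so $M_{\rho,\sigma}$ has the circular-ones property. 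For $M=\MIast k$: deleting a row of $\MIast k$ destroys the cyclic structure and yields a matrix which, after the obvious permutation, is a (linearly) consecutive-ones matrix — in fact $\MI k$ minus any one row is, up to column permutation, a "staircase" matrix with consecutive ones — so adjoining the all-zero column keeps the consecutive-ones property and a fortiori the circular-ones property; deleting the all-zero last column of $\MIast k$ leaves $\MI k$, which is the augmented-adjacency / staircase matrix of a cycle and does have the circular-ones property (its columns can be cyclically arranged $1,2,\ldots,k$); deleting a non-final column $j\leq k$ of $\MIast k$ again leaves a matrix with an all-zero column, equal to its own Tucker reduction, and the induced submatrix of $\MI k$ obtained by deleting column $j$ is consecutive-ones (again a staircase), so $M_{\rho,\sigma}$ has the circular-ones property.

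I would streamline the two cases by first recording the single clean statement: \emph{$\MI k$ has the circular-ones property for every $k\geq 3$, and every proper submatrix of $\MIast k$ and of $\MVast$ has the consecutive-ones property}; both halves follow from the minimality of $\MI k$ and $\MV$ as forbidden configurations for the consecutive-ones property (Tucker) together with the trivial remark that appending or deleting an all-zero column preserves the consecutive-ones property, and the observation that a matrix with an all-zero column has the circular-ones property iff it has the consecutive-ones property. Then the lemma follows by combining this with the row-complementation invariance $(a\miop M)_{\rho,\sigma}=a_\rho\miop M_{\rho,\sigma}$ and the fact that the circular-ones property is invariant under complementing rows. The main obstacle I anticipate is the bookkeeping in the case where a non-final column is deleted: one must be careful that the deleted column is never forced to be the (unique) all-zero column, and that the remaining all-zero column legitimately makes the deleted-column submatrix its own Tucker reduction so that Theorem~\ref{thm:cons-circ} applies; once that is handled, everything reduces to Tucker's minimality, which is standard.
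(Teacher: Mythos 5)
Your overall strategy is the same as the paper's: reduce everything to $M$ itself via the row-complementation invariance of the circular-ones property together with the identity $(a\miop M)_{\rho,\sigma}=a_\rho\miop M_{\rho,\sigma}$, and then handle $M\in\{\MIast{k},\MVast\}$ by combining Theorems~\ref{thm:Tucker-matrices} and~\ref{thm:cons-circ} (the paper compresses this second part into a one-line citation, so you are filling in details it omits). Your mechanism of ``the submatrix still has an all-zero column, hence is its own Tucker reduction'' is exactly right, and the $\MIast{k}$ case is handled correctly --- including the one maximal submatrix, namely $\MI{k}$ obtained by deleting the zero column, for which Tucker minimality for the consecutive-ones property cannot be invoked and a direct circular arrangement must be exhibited.

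The analogous subcase for $\MVast$, however, is mishandled. When the all-zero column of $\MVast$ is deleted, the resulting matrix is $\MV$ itself, which is not a proper submatrix of $\MV$ and does not have the consecutive-ones property; so the appeal to ``every proper submatrix of $\MV$ has the consecutive-ones property'' proves nothing for this case, and your streamlined summary claim that every proper submatrix of $\MVast$ has the consecutive-ones property is false (it fails precisely for $\MV$). What is needed is the separate verification that $\MV$ has the circular-ones property, exactly parallel to your verification for $\MI{k}$: for instance, the circular column order $2,1,5,4,3$ works, or, equivalently, complementing row $4$ of $\MV$ zeroes out column $5$ and yields a matrix with the consecutive-ones property, so Theorem~\ref{thm:cons-circ} applies. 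With that one check added, your argument is complete and correct.
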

\begin{proof} Since $a\miop M$ has the circular-ones property if and only if $M$ has the circular-ones property, it suffices to prove that $M$ is a minimal forbidden submatrix for the circular-ones property. The latter follows, for instance, from Theorems~\ref{thm:Tucker-matrices} and~\ref{thm:cons-circ}.\end{proof}

\begin{lem}\label{lem:MVast} If $a$ is any binary sequence of length $4$, then $a\miop\MVast$ represents the same configuration as one of the matrices $\MIV$, $\overline{\MIV}$, $\MVast$, and $\overline{\MVast}$. Conversely, each of the matrices $\MIV$, $\overline{\MIV}$, $\MVast$, and $\overline{\MVast}$ represents the same configuration as $a\miop\MVast$ for some binary sequence $a$ of length $4$. Moreover, the four matrices $\MIV$, $\overline{\MIV}$, $\MVast$, and $\overline{\MVast}$ represent pairwise different configurations.\end{lem}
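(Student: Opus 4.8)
The plan is to establish the first assertion by a short case analysis over the $2^4=16$ binary sequences $a$ of length~$4$, trimming the work with two symmetries and using the number of ones as a bookkeeping device, and to settle the distinctness claim separately by a counting invariant. I would start with the distinctness claim: the number of ones of a matrix is invariant under permutations of rows and of columns, and $\MIV$, $\overline{\MIV}$, $\MVast$, and $\overline{\MVast}$ have $9$, $15$, $11$, and $13$ ones, respectively; since these four numbers are pairwise distinct, the four matrices represent pairwise different configurations. The same invariant also organizes the first assertion: complementing row~$i$ of $\MVast$ changes its number of ones by $6-2w_i$, where $(w_1,w_2,w_3,w_4)=(2,4,2,3)$ are the row weights of $\MVast$, so $a\miop\MVast$ has exactly $11+2a_1-2a_2+2a_3$ ones, independently of $a_4$. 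Hence for each $a$ the count of ones already singles out which of the four matrices $a\miop\MVast$ can possibly equal up to permutations; the remaining content is only to confirm that the two matrices actually coincide up to relabeling rows and columns, not merely that they share a number of ones.

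To reduce the number of cases I would exploit two symmetries. First, $\overline{a\miop\MVast}=\overline{a}\miop\MVast$, where $\overline{a}=a+1111$ is the bitwise complement of $a$ (a one-line check, since complementing every entry is complementing every row); as $\overline{\MVast}$ and $\overline{\MIV}$ are the complements of $\MVast$ and $\MIV$, it therefore suffices to verify the first assertion for one sequence out of each pair $\{a,\overline{a}\}$. Second, the row permutation $\rho_0=\langle 3,2,1,4\rangle$ and the column permutation $\sigma_0=\langle 4,3,2,1,5,6\rangle$ satisfy $\MVast_{\rho_0,\sigma_0}=\MVast$ (a direct check), so $(a\miop\MVast)_{\rho_0,\sigma_0}=a_{\rho_0}\miop\MVast_{\rho_0,\sigma_0}=a_{\rho_0}\miop\MVast$, which means $a\miop\MVast$ and $a_{\rho_0}\miop\MVast$ represent the same configuration; here $a_{\rho_0}$ is $a$ with its first and third entries interchanged. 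Combining the two symmetries partitions the $16$ sequences into $6$ classes, with representatives, say, $0000$, $1000$, $0100$, $0001$, $1100$, and $1010$.

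It then remains to treat these six representatives: for each, complement the indicated rows of the $4\times 6$ matrix $\MVast$ and exhibit an explicit row map and column map carrying the result to the target prescribed by the ones count above, namely to $\MVast$ for $0000$, $0001$, $1100$, to $\overline{\MVast}$ for $1000$, to $\MIV$ for $0100$, and to $\overline{\MIV}$ for $1010$. For instance, complementing row~$2$ of $\MVast$ and then applying the row permutation $\langle 1,3,2,4\rangle$ and the column permutation $\langle 2,1,3,4,6,5\rangle$ yields $\MIV$. Each of the six is a routine $4\times 6$ verification. Together with the two reductions this proves the first assertion for all $16$ sequences, and the converse assertion is then immediate from the same computations: $a=0000$, $a=1111$, $a=0100$, and $a=1011$ exhibit $\MVast$, $\overline{\MVast}$, $\MIV$, and $\overline{\MIV}$, respectively, as $a\miop\MVast$ up to permutations of rows and of columns.

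The main obstacle is purely clerical: carrying out the six matrix manipulations without index errors. There is no conceptual difficulty, and the two symmetries together with the ones-count invariant are precisely what keep the bookkeeping short. The one point that looks nontrivial is verifying that $\rho_0,\sigma_0$ is a symmetry of $\MVast$, and it holds because rows~$2$ and~$4$ of $\MVast$ are the unique rows of weights $4$ and $3$, while rows~$1$ and~$3$, both of weight~$2$ with supports $\{1,2\}$ and $\{3,4\}$, are interchanged by reversing columns $1,2,3,4$ and leaving columns $5,6$ fixed.
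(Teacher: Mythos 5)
Your proposal is correct and follows essentially the same strategy as the paper: a finite verification over the $16$ sequences, cut down by the complementation symmetry $\overline{a}\miop\MVast=\overline{a\miop\MVast}$ plus one further symmetry of $\MVast$ (you use the row-$1$/row-$3$ swap with columns $1$--$4$ reversed, reducing to six base cases; the paper instead absorbs a complementation of row $4$ into the column permutation $\langle 2,1,4,3,6,5\rangle$, reducing to four), with pairwise distinctness settled by counting ones exactly as in the paper. Your explicit witness for $0100\miop\MVast\mapsto\MIV$ and your symmetry $\MVast_{\langle 3,2,1,4\rangle,\langle 4,3,2,1,5,6\rangle}=\MVast$ both check out, and your six orbit representatives together with the predicted targets agree with the paper's table~\eqref{eq:MVast}.
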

\begin{proof} For each of the $16$ possible sequences $a$, $(a\miop\MVast)$ represents the same configuration as some matrix in the set $\{\MIV,\overline{\MIV},\MVast,\overline{\MVast}\}$; namely:
\begin{gather}\label{eq:MVast}
\begin{aligned}
 \MIV  &=(0100\miop\MVast)_{\id_4,\langle 2,1,6,5,3,4\rangle}
        =(0101\miop\MVast)_{\id_4,\langle 1,2,5,6,4,3\rangle},\\
 \overline{\MIV}
       &=(1010\miop\MVast)_{\id_4,\langle 1,2,5,6,4,3\rangle}
        =(1011\miop\MVast)_{\id_4,\langle 2,1,6,5,3,4\rangle},\\
 \MVast&=(0000\miop\MVast)_{\id_4,\id_6}
        =(0001\miop\MVast)_{\id_4,\langle 2,1,4,3,6,5\rangle}\\
       &=(0110\miop\MVast)_{\langle 1,3,2,4\rangle,\langle 1,2,6,5,4,3\rangle}
        =(0111\miop\MVast)_{\langle 1,3,2,4\rangle,\langle 2,1,5,6,3,4\rangle}\\
       &=(1100\miop\MVast)_{\langle 2,1,3,4\rangle,\langle 5,6,3,4,1,2\rangle}
        =(1101\miop\MVast)_{\langle 2,1,3,4\rangle,\langle 6,5,4,3,2,1\rangle},\\
 \overline\MVast
       &=(0010\miop\MVast)_{\langle 2,1,3,4\rangle,\langle 6,5,4,3,2,1\rangle}
        =(0011\miop\MVast)_{\langle 2,1,3,4\rangle,\langle 5,6,3,4,1,2\rangle}\\
       &=(1000\miop\MVast)_{\langle 1,3,2,4\rangle,\langle 2,1,5,6,3,4\rangle}
        =(1001\miop\MVast)_{\langle 1,3,2,4\rangle,\langle 1,2,6,5,4,3\rangle}\\
       &=(1110\miop\MVast)_{\id_4,\langle 2,1,4,3,6,5\rangle}
        =(1111\miop\MVast)_{\id_4,\id_6}.
\end{aligned}
\end{gather}
In fact, the equalities for $\overline\MIV$ and $\overline\MVast$ follow from the corresponding equalities for $\MIV$ and $\MVast$ because, if we denote by $\overline a$ the sequence that arises from $a$ by replacing $0$'s by $1$'s and vice versa, then $(\overline a\miop\MVast)_{\rho,\sigma}=\overline{(a\miop\MVast)_{\rho,\sigma}}$ for any permutations $\rho$ and $\sigma$ of $[4]$ and $[6]$, respectively. Moreover, in the equalities corresponding to $\MIV$ and $\MVast$ in~\eqref{eq:MVast}, the leftmost equality in each line implies the rightmost one as follows. Let $\sigma'=\langle 2,1,4,3,6,5\rangle$ and $M\in\{\MIV,\MVast\}$. It can be verified by inspection that, in either case, $0001\miop M_{\id_4,\sigma'}=M$. Hence, if $\rho$ and $\sigma$ are maps such that $(a\miop\MVast)_{\rho,\sigma}=M$ and $\rho(4)=4$, then
$((a+0001)\miop\MVast)_{\rho,\sigma'\circ\sigma}=0001_\rho\miop(a\miop\MVast)_{\rho,\sigma'\circ\sigma}=0001\miop((a\miop\MVast)_{\rho,\sigma})_{\id_4,\sigma'}=0001\miop M_{\id_4,\sigma'}=M$. Therefore, in order to verify the validity of \eqref{eq:MVast}, it suffices to check the leftmost equality in those lines involving $a\miop\MVast$ for $a\in\{0100,0000,0110,1100\}$, which can be done by inspection.

Matrices $\MIV$, $\overline\MIV$, $\MVast$, and $\overline\MVast$ represent pairwise different configurations because they have pairwise different number of ones.\end{proof}

Our lemma below gives explicit rules for, given a Tucker matrix having $k'$ rows contained as a configuration in a Tucker reduction of a matrix $M$, finding a matrix having also $k'$ rows which is contained in $M$ as a configuration and which, by virtue of our Lemmas~\ref{lem:lemmaA} and~\ref{lem:MVast}, is a minimal forbidden submatrix for the circular-ones property.

\begin{lem}\label{lem:lemmaB} Let $M=(m_{ij})$ be a matrix, let $k$ be the number of rows of $M$, let $M'$ be a Tucker reduction of $M$, let $z$ be the label of any column of $M'$ consisting entirely of zeros, and let $a'$ be the binary sequence $m_{1z}m_{2z}\ldots m_{kz}$. Suppose that there are a row map $\rho'$ and a column map $\sigma'$ of $M'$ such that $M'_{\rho',\sigma'}$ is a Tucker matrix. If $k'$ is the number of rows of $M_{\rho',\sigma'}$, then $M_{\rho,\sigma}$ equals $a\miop\MIast{k'}$ or $a\miop\MVast$, for some binary sequence $a$ of length $k'$, $\rho=\rho'$, and some column map $\sigma$ of $M$. More precisely:
\begin{enumerate}[(i)]
\item If $M'_{\rho',\sigma'}=\MI{k'}$, then $M_{\rho,\sigma}=a\miop\MIast{k'}$, where $a=a'_\rho$, $\rho=\rho'$, and $\sigma=\langle\sigma'(1),\ldots,\sigma'(k'),z\rangle$;

\item If $M'_{\rho',\sigma'}=\MII{k'}$, then $M_{\rho,\sigma}=a\miop\MIast{k'}$, where $a=a'_\rho+00\ldots 011$, $\rho=\rho'$, and $\sigma=\langle\sigma'(1),\ldots,\sigma'(k'-1),z,\sigma'(k')\rangle$;

\item If $M'_{\rho',\sigma'}=\MIII{k'}$, then $M_{\rho,\sigma}=a\miop\MIast{k'}$, where $a=a'_\rho+00\ldots 01$, $\rho=\rho'$, and $\sigma=\langle\sigma'(1),\ldots,\sigma'(k'+1)\rangle$;

\item If $M'_{\rho',\sigma'}=\MIV$, then $M_{\rho,\sigma}=a\miop\MVast$, where $a=a'_\rho+0100$, $\rho=\rho'$, and $\sigma=\langle\sigma'(2),\sigma'(1),\sigma'(5),\sigma'(6),\sigma'(4),\sigma'(3)\rangle$;

\item If $M'_{\rho',\sigma'}=\MV$, then $M_{\rho,\sigma}=a\miop\MVast$, where $a=a'_\rho$, $\rho=\rho'$, and $\sigma=\langle\sigma'(1),\ldots,\sigma'(5),\linebreak z\rangle$.
\end{enumerate}\end{lem}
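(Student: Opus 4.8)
The plan is to reduce everything to a single observation about the Tucker reduction together with Lemmas~\ref{lem:DGN} and~\ref{lem:MVast}. Since $M'$ arises from $M$ by complementing rows in such a way that column $z$ of $M'$ consists entirely of zeros, a row $i$ of $M$ is complemented exactly when $m_{iz}=1$; that is, the complemented rows are precisely those $i$ with $a'_i=1$, so $M'=a'\miop M$ and, because $\miop$ is an involution, $M=a'\miop M'$. Applying the identity $(a\miop M)_{\rho,\sigma}=a_\rho\miop M_{\rho,\sigma}$ with the maps $\rho'$ and $\sigma'$ then gives $M_{\rho',\sigma'}=a'_{\rho'}\miop M'_{\rho',\sigma'}$, so $M_{\rho',\sigma'}$ is obtained from the Tucker matrix $M'_{\rho',\sigma'}$ by complementing rows according to $a'_{\rho'}$. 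I would also record at the outset that none of the five Tucker matrices has an all-zero column, so that $z\notin\{\sigma'(1),\sigma'(2),\ldots\}$ (otherwise some column of the Tucker matrix $M'_{\rho',\sigma'}$ would be all zeros) and hence each $\sigma$ prescribed in the statement is a genuine injective column map of $M$; and that the column of $M$ labelled $z$, restricted to the rows selected by $\rho=\rho'$, is exactly the sequence $a'_{\rho'}$.

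From here I would treat the five cases. For $\MI{k'}$ and $\MV$ the argument is immediate: the prescribed $\sigma$ produces $M_{\rho',\sigma'}$ with the column of $M$ labelled $z$ appended as a last column, i.e.\ with $a'_{\rho'}$ appended; since $\MIast{k'}$ (resp.\ $\MVast$) is $\MI{k'}$ (resp.\ $\MV$) with an all-zero last column, complementing its rows by $a'_{\rho'}$ yields $a'_{\rho'}\miop\MI{k'}$ (resp.\ $a'_{\rho'}\miop\MV$) with $a'_{\rho'}$ appended, so $M_{\rho,\sigma}=a'_{\rho'}\miop\MIast{k'}$ (resp.\ $a'_{\rho'}\miop\MVast$). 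For $\MIII{k'}$ the prescribed $\sigma$ is just $\sigma'$, so $M_{\rho,\sigma}=a'_{\rho'}\miop\MIII{k'}$; by Lemma~\ref{lem:DGN}, $\MIII{k'}=00\ldots01\miop\MIast{k'}$, and composing the two complementations gives $M_{\rho,\sigma}=(a'_{\rho'}+00\ldots01)\miop\MIast{k'}$. For $\MII{k'}$ the prescribed $\sigma$ reorders the columns so that $M_{\rho,\sigma}$ is, entry by entry, exactly $\bigl(a'_{\rho'}\miop\MIIast{k'}\bigr)_{\id_{k'},\langle1,\ldots,k'-1,k'+1,k'\rangle}$, the inserted column labelled $z$ playing the role of the appended all-zero column of $\MIIast{k'}$ after the last two columns are transposed; pulling the complementation outside (the row map is the identity) and using $00\ldots011\miop\MIIast{k'}_{\id_{k'},\langle1,\ldots,k'-1,k'+1,k'\rangle}=\MIast{k'}$ from Lemma~\ref{lem:DGN} gives $M_{\rho,\sigma}=(a'_{\rho'}+00\ldots011)\miop\MIast{k'}$. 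Finally, for $\MIV$ the prescribed $\sigma$ equals $\sigma'$ composed with the inverse of the column permutation $\tau=\langle2,1,6,5,3,4\rangle$ occurring in the identity $\MIV=(0100\miop\MVast)_{\id_4,\tau}$ of Lemma~\ref{lem:MVast}, so $M_{\rho,\sigma}=(a'_{\rho'}\miop\MIV)_{\id_4,\tau^{-1}}=a'_{\rho'}\miop\bigl((0100\miop\MVast)_{\id_4,\tau}\bigr)_{\id_4,\tau^{-1}}=a'_{\rho'}\miop(0100\miop\MVast)=(a'_{\rho'}+0100)\miop\MVast$. In every case $\rho=\rho'$, $a=a'_\rho+\text{(correction)}$ has length $k'$, and $M_{\rho,\sigma}$ is a minimal forbidden submatrix for the circular-ones property by Lemmas~\ref{lem:lemmaA} and~\ref{lem:MVast}; collecting the five cases yields the general assertion, since a Tucker matrix is one of $\MI{k'},\MII{k'},\MIII{k'},\MIV,\MV$.

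The computations are routine once the setup is in place; the one place that needs care, and which I expect to be the main obstacle, is the bookkeeping of the column maps in cases~(ii) and~(iv): checking that composing the column maps in the correct order, inserting the column labelled $z$ in the correct slot, and tracking which column of the Tucker matrix becomes which column of the appended or reordered matrix all match exactly the maps written in the statement, and in particular that the correction sequences $00\ldots011$ and $0100$ are produced with the right support. This is purely a matter of comparing entries against Lemmas~\ref{lem:DGN} and~\ref{lem:MVast}; no idea beyond the involutive identity $a\miop(a'\miop M)=(a+a')\miop M$ and the composition rule $M_{\rho'\circ\rho,\sigma'\circ\sigma}=(M_{\rho,\sigma})_{\rho',\sigma'}$ is needed.
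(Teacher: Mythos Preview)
Your proposal is correct and follows essentially the same route as the paper. Both proofs start from $M=a'\miop M'$, observe that $z$ lies outside the image of $\sigma'$, and then reduce each of the five Tucker cases to $\MIast{k'}$ or $\MVast$ via Lemmas~\ref{lem:DGN} and~\ref{lem:MVast}; the only cosmetic difference is that the paper first introduces the single auxiliary map $\sigma''=\langle\sigma'(1),\ldots,\sigma'(\ell'),z\rangle$ and writes every $\sigma$ as $\phi\circ\sigma''$ for a suitable $\phi$, whereas you construct each $\sigma$ directly (and in case~(iv) you permute the columns of $\MIV$ via $\tau^{-1}$ instead of passing through $\MIVast$ and dropping the appended column).
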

\begin{proof} By hypothesis, $M=a'\miop M'$. Thus, $M_{\rho,\sigma}=a'_\rho\miop M'_{\rho,\sigma}$ for each row map $\rho$ and each column map $\sigma$ of $M$. Let $\MT=M'_{\rho',\sigma'}$ and let $k'$ and $\ell'$ be the number of rows and columns of $\MT$. Since $\MT$ is a Tucker matrix, $\MT$ has no column consisting entirely of zeros and, consequently, $z$ does not belong to the image of $\sigma'$. Thus, it makes sense to consider the column map $\sigma''$ of $M$ given by $\sigma''=\langle\sigma'(1),\ldots,\sigma'(\ell'),z\rangle$. By construction, $M'_{\rho',\sigma''}=\MTast$. Hence, if $\rho'=\rho$, $\phi$ is any column map of $\MTast$, and $\sigma=\phi\circ\sigma''$, then
\[ M_{\rho,\sigma}=(a'\miop M')_{\rho,\sigma}
                 =a'_\rho\miop M'_{\rho,\sigma}
                 =a'_\rho\miop(M_{\rho',\phi\circ\sigma''})
                 =a'_\rho\miop(M'_{\rho',\sigma''})_{\id_{k'},\phi}
                 =a'_\rho\miop(\MTast)_{\id_{k'},\phi}. \]
The lemma follows by considering a column map $\phi$ of $\MTast$ chosen as follows.
\begin{enumerate}[(i)]
\item If $\MT=\MI{k'}$, we choose $\phi=\id_{k'+1}$ and, consequently,
\[ M_{\rho,\sigma}=a'_\rho\miop\MIast{k'}_{\id_{k'},\phi}
                  =a'_\rho\miop\MIast{k'}=a\miop\MIast{k'}, \]
where $a=a'_\rho$, $\rho=\rho'$, and $\sigma=\phi\circ\sigma''=\langle\sigma'(1),\ldots,\sigma'(k'),z\rangle$.

\item If $\MT=\MII{k'}$, we choose $\phi=\langle 1,\ldots,k'-1,k'+1,k'\rangle$ and, consequently, by Lemma~\ref{lem:DGN},
\[ M_{\rho,\sigma}=a'_\rho\miop\MIIast{k'}_{\id_{k'},\phi}
                  =a'_\rho\miop(00\ldots 011\miop\MIast{k'})
                  =a\miop\MIast{k'}, \]
where $a=a'_\rho+00\ldots 011$, $\rho=\rho'$, and $\sigma=\phi\circ\sigma''=\langle\sigma'(1),\ldots,\sigma'(k'-1),z,\sigma'(k')\rangle$.

\item If $\MT=\MIII{k'}$, we choose $\phi=\id_{k'+1}$ and, consequently, by Lemma~\ref{lem:DGN},
\[ M_{\rho,\sigma}=a'_\rho\miop\MIIIast{k'}_{\id_{k'},\phi}
                  =a'_\rho\miop(00\ldots 01\miop\MIast{k'})
                  =a\miop\MIast{k'}, \]
where $a=a'_\rho+00\ldots 01$, $\rho=\rho'$, and $\sigma=\phi\circ\sigma''=\langle\sigma'(1),\ldots,\sigma'(k'+1)\rangle$.

\item If $\MT=\MIV$, we choose $\phi=\langle 2,1,5,6,4,3\rangle$ and, consequently,
\[ M_{\rho,\sigma}=a'_\rho\miop(\MIVast)_{\id_{4},\phi}
                  =a'_\rho\miop(0100\miop\MVast)
                  =a\miop\MVast, \]
where $a=a'_\rho+0100$, $\rho=\rho'$, and $\sigma=\phi\circ\sigma''=\langle\sigma'(2),\sigma'(1),\sigma'(5),\sigma'(6),\sigma'(4),\sigma'(3)\rangle$.

\item If $\MT=\MV$, we choose $\phi=\id_{6}$ and, consequently,
\[ M_{\rho,\sigma}=a'_\rho\miop(\MVast)_{\id_{4},\phi}
                  =a'_\rho\miop\MVast=a\miop\MVast, \]
where $a=a'_\rho$, $\rho=\rho'$, and $\sigma=\phi\circ\sigma''=\langle\sigma'(1),\sigma'(2),\ldots,\sigma'(5),z\rangle$.
\end{enumerate}
This completes the proof of the lemma.\end{proof}

Our next three lemmas point at determining which of the minimal forbidden submatrices of the form $a\miop\MIast k$ in the above lemma represent the same or different configurations. Recall that if $\pi$ is a permutation of $[k]$, we denote by $\pi^\ast$ the permutation of $[k+1]$ that coincides with $\pi$ in each element of $[k]$.

\begin{lem}\label{lem:a=a'-mod-rot&perm} Let $k\geq 3$ and let $a$ and $a'$ be two binary sequences of length $k$. If $a$ equals $a'$ up to shifts and reversals, then $a\miop\MIast k$ and $a'\miop\MIast k$ represent the same configuration. More precisely, if $\pi$ is a composition of shift and reversal permutations of $[k]$, then $a_\pi\miop\MIast k=(a\miop\MIast k)_{\pi,\pi^*}$.\end{lem}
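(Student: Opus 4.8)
The plan is to reduce the statement to the two basic cases where $\pi$ is a single shift permutation or a single reversal permutation, settle those by a direct computation with the explicit form of $\MI k$, and then recombine the cases.

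The preliminary observation is that, by the identity $(a\miop M)_{\rho,\sigma}=a_\rho\miop M_{\rho,\sigma}$ recalled above (applied with $M=\MIast k$, $\rho=\pi$, $\sigma=\pi^*$), one has $(a\miop\MIast k)_{\pi,\pi^*}=a_\pi\miop(\MIast k)_{\pi,\pi^*}$; and since $\MIast k=(\MI k)^*$ while $\pi^*$ fixes the last, all-zero, column, $(\MIast k)_{\pi,\pi^*}=((\MI k)_{\pi,\pi})^*$. So everything is governed by the behaviour of $\MI k$ under the simultaneous row-and-column conjugation $(\MI k)_{\pi,\pi}$. Recall that $\MI k$ has a $1$ exactly in the positions $(i,i)$ and $(i,i+1)$, indices read modulo $k$. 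When $\pi=\pi_s$ is the shift permutation, a one-line check using that $\pi_s$ is a bijection of $[k]$ respecting the cyclic order gives $(\MI k)_{\pi_s,\pi_s}=\MI k$; hence $(\MIast k)_{\pi_s,\pi_s^*}=\MIast k$ and so $(a\miop\MIast k)_{\pi_s,\pi_s^*}=a_{\pi_s}\miop\MIast k$. Since shift permutations commute, iterating this identity --- using $(\pi_1\circ\pi_2)^*=\pi_1^*\circ\pi_2^*$, the composition rule $M_{\rho'\circ\rho,\sigma'\circ\sigma}=(M_{\rho,\sigma})_{\rho',\sigma'}$, and $a_{\pi_1\circ\pi_2}=(a_{\pi_1})_{\pi_2}$ --- yields the asserted equality for every $\pi$ that is a composition of shift permutations.

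For $\pi=\pi_r$ the reversal permutation, the same kind of computation shows that the $1$'s of $(\MI k)_{\pi_r,\pi_r}$ lie in the positions $(i,i)$ and $(i,i-1)$, i.e.\ $(\MI k)_{\pi_r,\pi_r}=(\MI k)\trans$. This is not $\MI k$ itself, but it is still a row-and-column permutation of it: one checks that $(\MI k)_{\pi_r,\tau}=\MI k$ for the permutation $\tau=\pi_r\circ\pi_s^{-1}$ of $[k]$ (the reflection fixing $1$ and reversing $2,\dots,k$), so that $(\MIast k)_{\pi_r,\tau^*}=\MIast k$ and therefore $(a\miop\MIast k)_{\pi_r,\tau^*}=a_{\pi_r}\miop\MIast k$; as $\tau^*$ is a permutation of $[k+1]$, this exhibits $a\miop\MIast k$ and $a_{\pi_r}\miop\MIast k$ as representing the same configuration. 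To conclude in general: ``representing the same configuration'' is transitive, and if for every binary sequence $a$ the matrices $a\miop\MIast k$ and $a_{\pi_1}\miop\MIast k$ represent the same configuration and likewise for $\pi_2$, then applying the $\pi_2$-fact to the sequence $a_{\pi_1}$ and using $a_{\pi_1\circ\pi_2}=(a_{\pi_1})_{\pi_2}$ shows the same for $\pi_1\circ\pi_2$; since $\pi_s$ and $\pi_r$ generate all compositions of shift and reversal permutations, the conclusion holds for every such $\pi$.

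The step I expect to be the main obstacle is the reversal case: conjugating $\MIast k$ simultaneously on rows and columns by $\pi_r$ returns the transpose of $\MI k$ rather than $\MI k$, so a literal equality is only recovered after correcting the column permutation (for instance to $\tau^*$, or equivalently after rerouting through a shift, since $(\MI k)\trans=(\MI k)_{\pi_s^{-1},\id}$); the remaining work is the careful bookkeeping of how the complementing sequence $a$ is carried along by each row and column map.
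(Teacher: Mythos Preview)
Your instinct about the reversal case is exactly right, and in fact you have caught a slip in the paper. The paper's own two-line proof reduces to $\pi$ a single shift or reversal and then asserts that $\MIast k_{\pi,\pi^*}=\MIast k$ in either case. For the shift this is true and your argument reproduces it. For the reversal it is false: your computation $(\MI k)_{\pi_r,\pi_r}=(\MI k)\trans$ is correct (check $k=3$ or $k=4$ by hand), so $(\MIast k)_{\pi_r,\pi_r^*}=((\MI k)\trans)^*\neq\MIast k$ for $k\geq 3$. Consequently the ``more precisely'' clause of the lemma, as written, is simply not true whenever $\pi$ involves a reversal; it cannot be proved.

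Your workaround---replacing the column map $\pi_r^*$ by $\tau^*$ with $\tau=\pi_r\circ\pi_s^{-1}$, so that $(\MI k)_{\pi_r,\tau}=\MI k$---is the correct repair, and your transitivity argument then establishes the first sentence of the lemma, which is what is genuinely needed. So your route is not merely different from the paper's; it is the corrected version of it. One downstream remark worth noting: Algorithm~\ref{algo:1} (line~\ref{algo1:step4z}) and the proof of Corollary~\ref{cor:circR} invoke the explicit column map $\pi^*$, so strictly speaking the same fix must be carried there as well (use the appropriate $\tau^*$ whenever the bracelet-normalising permutation $\pi$ from Theorem~\ref{thm:Booth} contains a reversal). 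This costs nothing in running time and does not affect the correctness of the output, since the first sentence of the lemma still guarantees that the two matrices represent the same configuration.
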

\begin{proof} It suffices to consider the case where $\pi$ is the shift or the reversal permutation of $[k]$ (as the general case will then follow by induction). In either case, it is easy to verify that $\MIast k_{\pi,\pi^*}=\MIast k$. Therefore, $(a\miop\MIast k)_{\pi,\pi^*}=a_\pi\miop\MIast k_{\pi,\pi^*}=a_\pi\miop\MIast k$.\end{proof}

\begin{lem}\label{lem:MIast3} If $a$ is a binary sequence of length $3$, then $a\miop\MIast 3$ represent the same configuration as $a'\miop\MIast 3$ for some $a'\in A_3$. Moreover, $000\miop\MIast 3$ and $111\miop\MIast 3$ represent different configurations.\end{lem}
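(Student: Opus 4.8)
The plan is to combine Lemma~\ref{lem:a=a'-mod-rot&perm} with a short finite check. By that lemma, $a\miop\MIast{3}$ represents the same configuration as $b\miop\MIast{3}$ whenever $b$ arises from $a$ by shifts and reversals; in particular we may take $b$ to be the bracelet of the equivalence class of $a$. The length-$3$ binary sequences split into exactly four classes under shifts and reversals, with bracelets $000$, $001$, $011$, and $111$, of which only $000$ and $111$ lie in $A_3$ (the sequences $001$ and $011$ are bracelets but are deliberately excluded from $A_3$). Hence the first assertion reduces to checking that $001\miop\MIast{3}$ and $011\miop\MIast{3}$ each represent the same configuration as one of $000\miop\MIast{3}$ and $111\miop\MIast{3}$.

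To carry this out I would write the four matrices explicitly, using that $\MIast{3}$ is the circulant matrix $\MI{3}$ with an all-zero column appended. One observes that $000\miop\MIast{3}=\MIast{3}$ and $011\miop\MIast{3}$ each consist of an all-zero column together with three columns that, up to permutation of rows and of columns, form the $3\times 3$ matrix with $0$'s on the diagonal and $1$'s off it; concretely, $011\miop\MIast{3}=(000\miop\MIast{3})_{\rho,\sigma}$ with $\rho=\id_{3}$ and $\sigma=\langle 2,1,4,3\rangle$. Likewise $001\miop\MIast{3}$ and $111\miop\MIast{3}$ each consist of an all-ones column together with three columns forming a $3\times 3$ permutation matrix, and $001\miop\MIast{3}=(111\miop\MIast{3})_{\rho,\sigma}$ with $\rho=\id_{3}$ and $\sigma=\langle 3,4,1,2\rangle$. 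Exhibiting these two pairs of maps (and verifying the claimed equalities entrywise) establishes the first assertion.

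For the second assertion, the plan is to use that a permutation of rows and of columns preserves the multiset of column sums of a matrix. The column sums of $000\miop\MIast{3}=\MIast{3}$ form the multiset $\{0,2,2,2\}$, while those of $111\miop\MIast{3}$ form $\{1,1,1,3\}$; equivalently, $\MIast{3}$ has an all-zero column and $111\miop\MIast{3}$ has none. Therefore these two matrices represent different configurations.

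I do not expect a genuine obstacle: the whole argument is a bounded case analysis. The single point that needs care --- and the reason Lemma~\ref{lem:a=a'-mod-rot&perm} alone does not suffice --- is that two of the four length-$3$ bracelets are excluded from $A_3$, so one must separately verify that row-complementing according to $001$ and to $011$ yields, up to permutations, nothing beyond the $000$ and $111$ cases.
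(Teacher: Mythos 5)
Your proposal is correct and follows essentially the same route as the paper: reduce to bracelets via Lemma~\ref{lem:a=a'-mod-rot&perm}, dispose of $001$ and $011$ with exactly the column permutations $\langle 3,4,1,2\rangle$ and $\langle 2,1,4,3\rangle$ that the paper uses, and separate $000\miop\MIast 3$ from $111\miop\MIast 3$ by a permutation-invariant (the paper checks for an all-zero column, you use the multiset of column sums, which is equivalent here). No gaps.
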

\begin{proof} Let $a'$ be the lexicographically smallest binary sequence of length $3$ such that $a\miop\MIast 3$ and $a'\miop\MIast{3}$ represent the same configuration. By Lemma~\ref{lem:a=a'-mod-rot&perm}, $a'$ is a bracelet; i.e., $a'\in\{000,001,011,111\}$. Since $(011\miop\MIast 3)_{\id_3,\langle 2,1,4,3\rangle}=000\miop\MIast 3$ and $(001\miop\MIast 3)_{\id_3,\langle 3,4,1,2\rangle}=111\miop\MIast 3$, the first assertion follows. The second assertion follows, for instance, from the fact that among $000\miop\MIast 3$ and $111\miop\MIast 3$ only the former has a column consisting entirely of zeros.\end{proof}

\begin{lem}\label{lem:MIastgeq4} Let $k\geq 4$ and let $a$ and $a'$ be binary sequences of length $k$. If $a\miop\MIast k$ and $a'\miop\MIast k$ represent the same configuration, then $a'$ equals $a$ up to shifts and reversals.\end{lem}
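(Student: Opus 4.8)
The plan is to reduce the statement, by a count of the number of ones per row, to understanding the permutations of the rows of $\MIast k$ that preserve it up to a permutation of its columns, and then to identify those permutations with the dihedral symmetries of a cycle.

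Concretely, suppose $a\miop\MIast k$ and $a'\miop\MIast k$ represent the same configuration, say $(a\miop\MIast k)_{\rho,\sigma}=a'\miop\MIast k$ for some permutation $\rho$ of $[k]$ and some permutation $\sigma$ of $[k+1]$. Using the identity $(a\miop M)_{\rho,\sigma}=a_\rho\miop M_{\rho,\sigma}$ together with $b\miop(b'\miop N)=(b+b')\miop N$, I would first rewrite this as $(\MIast k)_{\rho,\sigma}=(a_\rho+a')\miop\MIast k$. Now count ones: every row of $\MIast k$ has exactly two ones, hence so does every row of $(\MIast k)_{\rho,\sigma}$, whereas complementing a row of $\MIast k$ turns its two ones into $k-1$ ones, so row $i$ of $(a_\rho+a')\miop\MIast k$ has two or $k-1$ ones according as $(a_\rho+a')_i$ equals $0$ or $1$. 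Since $k\geq 4$ we have $k-1\neq 2$, so $a_\rho+a'=00\ldots 0$; that is, $a'=a_\rho$ and $(\MIast k)_{\rho,\sigma}=\MIast k$. (This is the only step that uses $k\geq 4$, and it is precisely what fails when $k=3$.)

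It now remains to show that $\rho$ is a composition of shift and reversal permutations of $[k]$, since then $a'=a_\rho$ yields that $a'$ equals $a$ up to shifts and reversals, as in Lemma~\ref{lem:a=a'-mod-rot&perm}. For this I would consider the graph $H$ on vertex set $[k]$ in which $i$ and $i'$ are adjacent precisely when rows $i$ and $i'$ of $\MIast k$ have a $1$ in a common column. Because $(\MIast k)_{\rho,\sigma}=\MIast k$ and $\sigma$ merely permutes the columns, rows $i,i'$ of $\MIast k$ are adjacent in $H$ if and only if rows $\rho(i),\rho(i')$ are; thus $\rho$ is an automorphism of $H$. A glance at $\MIast k$ shows that, for $k\geq 4$, row $i$ has a common $1$-column only with rows $i-1$ and $i+1$ (indices modulo $k$), so $H\cong C_k$; and the automorphism group of $C_k$, regarded as a group of permutations of $[k]$, is the dihedral group generated by the shift permutation and the reversal permutation. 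Hence $\rho$ is a composition of shift and reversal permutations, which completes the proof.

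The argument is short, and the only mildly delicate point is the last one: one must check that $\MIast k$ has no ``unexpected'' row symmetries --- which reduces to the easy fact that $H\cong C_k$ for $k\geq 4$ --- and recall the standard description of the automorphism group of $C_k$ as the group generated by the shift and reversal permutations; neither of these is a real obstacle.
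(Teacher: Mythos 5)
Your proposal is correct and follows essentially the same route as the paper: the same reduction to $(\MIast k)_{\rho,\sigma}=(a_\rho+a')\miop\MIast k$, the same count of ones per row (using $k\geq 4$) to force $a'=a_\rho$ and $(\MIast k)_{\rho,\sigma}=\MIast k$, and then the observation that $\rho$ must respect the cyclic adjacency structure of the rows. The paper carries out this last step by chasing entries to get $\pi(i)=\pi(i-1)\pm 1\pmod k$, while you package it as $\mathrm{Aut}(C_k)$ being the dihedral group generated by the shift and reversal permutations; this is the same argument in different clothing.
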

\begin{proof} Let $a=a_1\ldots a_k$ and $a'=a'_1\ldots a'_k$ and suppose that $a\miop\MIast k$ and $a'\miop\MIast k$ represent the same configuration. Thus, there are permutations $\pi$ and $\sigma$ of $[k]$ and $[k+1]$, respectively, such that $(a\miop\MIast k)_{\pi,\sigma}=a'\miop\MIast k$.  Hence, 
\[ \MIast k_{\pi,\sigma}=(a\miop(a\miop\MIast k))_{\pi,\sigma}=a_\pi\miop(a\miop\MIast k)_{\pi,\sigma}=a_\pi\miop(a'\miop\MIast k)=a''\miop\MIast k, \]
where $a''=a_\pi+a'$. Since each of $\MIast k_{\pi,\sigma}$ and $\MIast k$ has at least five columns and exactly two ones per row, necessarily $a''$ consists entirely of zeros. Therefore, $a'=a_\pi$ and $\MIast k_{\pi,\sigma}=\MIast k$. Thus, for each $i\in[k]$, the entries $(\pi(i),\sigma(i))$ and $(\pi(i-1),\sigma(i))$ of $\MIast k$ are ones, where $\pi(0)$ stands for $\pi(k)$. Hence, $\pi(i)=\pi(i-1)\pm 1\pmod k$, for each $i\in[k]$. As $k\geq 4$ and $\pi$ is a permutation, either $\pi(i)=\pi(k)+i\pmod k$ for each $i\in[k]$ or $\pi(i)=\pi(k)-i\pmod k$ for each $i\in[k]$. In both cases, $\pi$ is a composition of shift and reversal permutations. Since $a'=a_\pi$, the proof of the lemma is complete.
\end{proof}

By combining the preceding lemmas with Theorems~\ref{thm:cons-circ}, \ref{thm:booth-and-lueker}, and \ref{thm:Lindzey-McConnell}, we now show that it is possible to find in linear time a matrix in the set $\ForbRow$ contained as a configuration in any matrix not having the circular-ones property.

\begin{algorithm2e}[t!]\DontPrintSemicolon
\SetAlgoVlined
 \KwIn{A $k\times\ell$ matrix $M=(m_{ij})$ not having the circular-ones property}
 \KwOut{Maps $\rho_F$ and $\sigma_F$ such that $M_{\rho_F,\sigma_F}\in\ForbRow$ and the sequence $c$ of the entries in the last column of $M_{\rho_F,\sigma_F}$}

 Let $M'$ be a Tucker reduction of $M$ such that $\size(M')\in O(\size(M))$, $z$ be the label of a column of $M'$ full of $0$'s, and $a':=m_{1z}m_{2z}\ldots m_{kz}$\;\label{algo1:line1} 
 Find maps $\rho'$ and $\sigma'$ such that $M'_{\rho',\sigma'}$ is a Tucker matrix and let $k'$ be the number of rows of $M'_{\rho',\sigma'}$\;\label{algo1:line2}
 Let $\rho:=\rho'$ and find a map $\sigma$ and a sequence $a$ such that $M_{\rho,\sigma}$ equals $a\miop\MIast{k'}$ or $a\miop\MVast$\;\label{algo1:line3}
 \If{$M_{\rho,\sigma}=a\miop\MIast{k'}$}{\label{algo1:step4cond}
   Find a composition $\pi$ of shift and reversal permutations of $[k']$ such that $a_\pi$ is a bracelet\;\label{algo1:step4}
   \If{$a_\pi=001$}{\label{algo1:step4a}
      \Return $\rho_F:=\pi\circ\rho$, $\sigma_F:=\langle 3,4,1,2\rangle\circ\pi^*\circ\sigma$, and $c:=111$\;\label{algo1:step4c}}
   \ElseIf{$a_\pi=011$}{
      \Return $\rho_F:=\pi\circ\rho$, $\sigma_F:=\langle 2,1,4,3\rangle\circ\pi^*\circ\sigma$, and $c:=000$\;\label{algo1:step4e}}
   \Else{\Return $\rho_F:=\pi\circ\rho$, $\sigma_F:=\pi^*\circ\sigma$, and $c:=a_\pi$\;\label{algo1:step4z}}}
 \Else{
   Let $\rho'$ and $\sigma'$ be maps so that $(a\miop\MVast)_{\rho',\sigma'}\in\{\MIV,\overline\MIV,\MVast,\overline\MVast\}$\;\label{algo1:step5a}
   Let $c' $ be the sequence of entries in the last column of $(a\miop\MVast)_{\rho',\sigma'}$\;\label{algo1:step5b}
   \Return $\rho_F:=\rho'\circ\rho$, $\sigma_F:=\sigma'\circ\sigma$, and $c:=c'$\;\label{algo1:step5}}
 \caption{Finds some matrix in the set $\ForbRow$ contained in $M$ as a configuration}\label{algo:1}
\end{algorithm2e}

\begin{cor}\label{cor:circR} Given any matrix $M$ not having the circular-ones property, Algorithm~\ref{algo:1} finds a matrix in the set $\ForbRow$ contained in $M$ as a configuration. Moreover, Algorithm~\ref{algo:1} can be implemented to run in $O(\size(M))$ time.\end{cor}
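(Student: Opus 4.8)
The plan is to verify correctness branch by branch, invoking the lemmas already proved, and then to bound the running time line by line; there is no new mathematical content, so the argument is essentially bookkeeping.

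\emph{Correctness.} Since $M$ lacks the circular-ones property, Theorem~\ref{thm:cons-circ} gives that any Tucker reduction $M'$ of $M$ lacks the consecutive-ones property, so by Theorems~\ref{thm:Tucker-matrices} and~\ref{thm:Lindzey-McConnell} the maps $\rho'$, $\sigma'$ sought in line~\ref{algo1:line2} exist and can be found (after possibly composing them with permutations so that $M'_{\rho',\sigma'}$ is one of the matrices of Figure~\ref{fig:TuckerMatrices}). With $a'$ the $z$-th column of $M$ as in line~\ref{algo1:line1} we have $M=a'\miop M'$, and Lemma~\ref{lem:lemmaB} both tells the algorithm whether $M_{\rho,\sigma}$ has the form $a\miop\MIast{k'}$ (when $M'_{\rho',\sigma'}$ is $\MI{k'}$, $\MII{k'}$, or $\MIII{k'}$) or $a\miop\MVast$ (when it is $\MIV$ or $\MV$), and supplies closed formulas for the required $\rho=\rho'$, $\sigma$, and $a$, which line~\ref{algo1:line3} outputs. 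In the first case, Theorem~\ref{thm:Booth} furnishes the required $\pi$, a composition of shift and reversal permutations, so Lemma~\ref{lem:a=a'-mod-rot&perm} yields $(M_{\rho,\sigma})_{\pi,\pi^*}=a_\pi\miop\MIast{k'}$ with $a_\pi$ a bracelet, and I would split on $k'$. If $k'\geq 4$, then $a_\pi$ is a bracelet of length $k'\neq 3$, hence $a_\pi\in A_{k'}$ and $a_\pi\miop\MIast{k'}\in\ForbRow$; the algorithm reaches its last branch, and the returned $c=a_\pi$ is the last column of $a_\pi\miop\MIast{k'}$ because $\MIast{k'}$ has an all-zero last column. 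If $k'=3$, then $a_\pi$ is one of the four length-$3$ bracelets $000,001,011,111$; for $a_\pi=001$ the identity $(001\miop\MIast 3)_{\id_3,\langle 3,4,1,2\rangle}=111\miop\MIast 3$ from the proof of Lemma~\ref{lem:MIast3} shows the output is $111\miop\MIast 3\in\ForbRow$ with last column $111$, the case $a_\pi=011$ is symmetric via $(011\miop\MIast 3)_{\id_3,\langle 2,1,4,3\rangle}=000\miop\MIast 3$, and otherwise $a_\pi\in\{000,111\}=A_3$, so the last branch is again correct. In the second case, Lemma~\ref{lem:MVast} provides permutations $\rho'$, $\sigma'$ of $[4]$ and $[6]$, readable off~\eqref{eq:MVast}, with $(a\miop\MVast)_{\rho',\sigma'}\in\{\MIV,\overline{\MIV},\MVast,\overline{\MVast}\}\subseteq\ForbRow$, whose last column is $c'$. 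In every branch, composing the returned maps with $\rho$ and $\sigma$ gives, by the composition rules for row and column maps recalled in Subsection~\ref{ssec:circR}, a row map $\rho_F$ and a column map $\sigma_F$ of $M$ with $M_{\rho_F,\sigma_F}\in\ForbRow$ and last column $c$, as required.

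\emph{Running time.} Line~\ref{algo1:line1} costs $O(\size(M))$ and delivers $M'$ with $\size(M')\in O(\size(M))$ by Theorem~\ref{thm:booth-and-lueker}; extracting column $z$ of $M$ into an array $a'$ also costs $O(\size(M))$. Line~\ref{algo1:line2} costs $O(\size(M'))=O(\size(M))$ by Theorem~\ref{thm:Lindzey-McConnell}, and since $M'_{\rho',\sigma'}$ has $O(k')$ rows, columns, and ones, deciding which Tucker matrix it is, normalizing it, and reading off $k'\leq k\leq\size(M)$ cost $O(k')$. The formulas of Lemma~\ref{lem:lemmaB} used in line~\ref{algo1:line3} merely rearrange $\rho'$ and $\sigma'$, append $z$, and add a fixed tail to the length-$k'$ sequence $a'_\rho$ (itself $k'$ lookups into $a'$), so line~\ref{algo1:line3} runs in $O(k')$ time. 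In the first branch $\pi$ is found in $O(k')$ time by Theorem~\ref{thm:Booth}, and testing $a_\pi\in\{001,011\}$ and forming $\pi\circ\rho$, $\pi^*\circ\sigma$, and the constant-permutation compositions cost $O(k')$ since these maps have domain of size at most $k'+1$; in the second branch all work is $O(1)$ because $a\miop\MVast$ and the table~\eqref{eq:MVast} have fixed size. Summing over all lines gives $O(\size(M))$.

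\emph{Main obstacle.} I expect the only delicate points, all minor, to be: matching the constant permutations hard-coded in lines~\ref{algo1:step4c}, \ref{algo1:step4e}, and~\ref{algo1:step5a} with the identities in Lemmas~\ref{lem:MIast3} and~\ref{lem:MVast}; noting that $\langle 3,4,1,2\rangle$ is applied to $[4]$ precisely because the branch $a_\pi=001$ forces $k'=3$, so that $\pi^*\circ\sigma$ indeed has domain $[4]$; and checking that every per-line cost is bounded by $\size(M)$ via $\size(M')\in O(\size(M))$.
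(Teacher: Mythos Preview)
Your proof is correct and follows essentially the same approach as the paper: you verify correctness branch by branch (invoking Lemma~\ref{lem:lemmaB}, then Lemma~\ref{lem:a=a'-mod-rot&perm} and the identities from Lemma~\ref{lem:MIast3} in the $\MIast{k'}$ case, and Lemma~\ref{lem:MVast} in the $\MVast$ case) and bound the running time line by line using Theorems~\ref{thm:booth-and-lueker}, \ref{thm:Lindzey-McConnell}, and~\ref{thm:Booth}, exactly as the paper does. The only cosmetic difference is that you split the bracelet analysis explicitly on whether $k'\geq 4$ or $k'=3$, whereas the paper phrases this as ``$a_\pi\in A_{k'}$ unless $a_\pi\in\{001,011\}$''; the content is identical.
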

\begin{proof} We first prove the correctness. First suppose that the condition of line~\ref{algo1:step4cond} holds and let $\pi$ be as specified in line~\ref{algo1:step4}. Thus, by Lemma~\ref{lem:a=a'-mod-rot&perm},
\[ M_{\pi\circ\rho,\pi^*\circ\sigma}=(M_{\rho,\sigma})_{\pi,\pi^*}=(a\miop\MIast{k'})_{\pi,\pi^*}=a_\pi\miop\MIast{k'}_{\pi,\pi^*}=a_\pi\miop\MIast{k'}, \]
which, by definition, has $a_\pi$ as the sequence of entries in its last column. 
Moreover, as $a_\pi$ is a bracelet, $a_\pi\in A_{k'}$ unless $a_\pi\in\{001,011\}$.
This proves that the output given in line~\ref{algo1:step4z} is correct. If $a_\pi=001$, then $M_{\pi\circ\rho,\langle 3,4,1,2\rangle\circ\pi^*\circ\sigma}
=(M_{\pi\circ\rho,\pi^*\circ\sigma})_{\id_3,\langle 3,4,1,2\rangle}
=(a_\pi\miop\MIast{k'})_{\id_3,\langle 3,4,1,2\rangle}=(001\miop\MIast 3)_{\id_3,\langle 3,4,1,2\rangle}=111\miop\MIast 3$, which proves that the output given in line~\ref{algo1:step4c} is correct. Similarly, if $a_\pi=011$, then $M_{\pi\circ\rho,\langle 2,1,4,3\rangle\circ\pi^*\circ\sigma}=(011\miop\MIast 3)_{\id_3,\langle 2,1,4,3\rangle}=000\miop\MIast 3$, which proves that the output given in line~\ref{algo1:step4e} is also correct. It only remains to consider the case where $M_{\rho,\sigma}=a\miop\MVast$. In this case, permutations $\rho'$ and $\sigma'$ as specified in Step 5 exist by virtue of Lemma~\ref{lem:MVast} and satisfy $M_{\rho'\circ\rho,\sigma'\circ\sigma}=(M_{\rho,\sigma})_{\rho',\sigma'}=(a\miop\MVast)_{\rho',\sigma'}\in\{\MIV,\overline{\MIV},\MVast,\overline{\MVast}\}$, which proves that the output given in line~\ref{algo1:step5} is also correct. This completes the proof of the correctness.

We now prove that the algorithm can be implemented to run in linear time. Theorems~\ref{thm:booth-and-lueker} shows that line~\ref{algo1:line1} can be performed in $O(\size(M))$ time. Since $M$ does not have the circular-ones property, Theorems~\ref{thm:cons-circ} implies that $M'$ does not have the consecutive-ones property and, consequently, the algorithm of Theorem~\ref{thm:Lindzey-McConnell} can be used to find maps $\rho'$ and $\sigma'$ satisfying the condition of line~\ref{algo1:line2} in $O(\size(M))$ time. Lemma~\ref{lem:lemmaB} shows that line~\ref{algo1:line3} can be performed in additional $O(k')$ time. By virtue of Theorem~\ref{thm:Booth}, line~\ref{algo1:step4} can be performed in additional $O(k')$ time. Clearly, lines~\ref{algo1:step4a} to~\ref{algo1:step4z} can also be performed in $O(k')$ time. Finally, lines~\ref{algo1:step5a} to~\ref{algo1:step5} can be performed in $O(1)$ time (e.g., by precomputing the a correspondence $a\mapsto(\rho',\sigma',c')$ for each of the $16$ possible sequences $a$; see~\eqref{eq:MVast} in the proof of Lemma~\ref{lem:MVast}). This proves the $O(\size(M))$ time bound for Algorithm~\ref{algo:1}.\end{proof}

We are now ready to give the proof of Theorem~\ref{thm:circR}.

\begin{proof}[Proof of Theorem~\ref{thm:circR}] Lemmas~\ref{lem:lemmaA} and \ref{lem:MVast} imply that  no matrix in the set $\ForbRow$ has the circular-ones property. Thus, if $M$ contains a matrix in the set $\ForbRow$ as a configuration, then $M$ does not have the circular-ones property. Conversely, if $M$ does not have the circular-ones property, Algorithm~\ref{algo:1} can be used to find a matrix in the set $\ForbRow$ contained in $M$ as a configuration. This proves the first assertion of the theorem.

That all the matrices in the set $\ForbRow$ are minimal forbidden submatrices for the circular-ones property follows from Lemmas~\ref{lem:lemmaA} and~\ref{lem:MVast}. Hence, in order to prove that $\ForbRow$ is a minimal set of matrices satisfying the first assertion of the theorem, it suffices to prove that no two different matrices in the set $\ForbRow$ represent the same configuration, which follows from Lemmas~\ref{lem:MIast3}, \ref{lem:MIastgeq4}, and~\ref{lem:MVast}.
\end{proof}

\begin{rmk}\label{rmk:bracelets} Notice that, if $k\geq 3$, then the number of matrices in $\ForbRow$ having $k$ rows is $\vert A_k\vert$ if $k\neq 4$ and $\vert A_k\vert+4$ if $k=4$. Hence, for $k=3,4,5,6,7,8,\ldots$ this number is $2,10,8,13,18,30,\ldots$ and coincides, for every $k\geq 5$, with the number of binary bracelets of length $k$, which is known to be:
\begin{equation} \label{eq:bracelets}
   \frac{1}{2k}\sum_{d\mid k}\varphi(d)2^{k/d}+
     \left\{\begin{array}{ll}
       \frac 34\cdot 2^{k/2}          &\mbox{if $k$ is even,}\smallskip\\
       \frac 12\cdot 2^{(k-1)/2}      &\mbox{if $k$ is odd,}
     \end{array}\right.
\end{equation}
where $d\mid k$ stands for `$d$ is a positive divisor of $k$' and $\varphi$ is Euler's totient function. The interested reader is referred to~\cite{MR0098037} for a derivation of \eqref{eq:bracelets} and the definition of $\varphi$.\end{rmk}

\subsection{Forbidden submatrices for the circular-ones property for rows and columns}\label{ssec:circRC}

We say that a matrix $M$ is a \emph{minimal forbidden submatrix for the circular-ones property for rows and columns} if $M$ does not have the circular-ones property for rows and columns but every submatrix of $M$ different from $M$ has the circular-ones property for rows and columns. Let
\[ \ARowCol=\{0001,0011,0111,00001,00011,00111,01111,000111\}, \]
and let
\[ \ForbRowCol=\bigcup_{k=3}^\infty\{\MIast k,\overline{\MIast k}\}\cup\{a\miop\MIast{\vert a\vert}\colon\,a\in\ARowCol\}\cup\{\MVast,\overline{\MVast}\}, \]
where by $\vert a\vert$ we denote the length of sequence $a$. Hence, $\ForbRowCol$ consists of two infinite families of matrices ($\MIast 3,\MIast 4,\MIast 5,\ldots$ and $\overline{\MIast 3},\overline{\MIast 4},\overline{\MIast 5},\ldots$) plus ten sporadic matrices. We denote by $\ForbRowCol\trans$ the set of transposes of the matrices in the set $\ForbRowCol$. 

The result below is the main result of this subsection and characterizes the circular-ones property for rows and columns in terms of the set $\ForbRowCol\cup\ForbRowCol\trans$ of minimal forbidden submatrices.

\begin{thm}\label{thm:circRC} A matrix $M$ has the circular-ones property for rows and columns if and only if $M$ contains no matrix in the set $\ForbRowCol\cup\ForbRowCol\trans$ as a configuration. Moreover, $\ForbRowCol\cup\ForbRowCol\trans$ is a minimal set having this property and the matrices in the set $\ForbRowCol\cup\ForbRowCol\trans$ are minimal forbidden submatrices for the circular-ones property for rows and columns.\end{thm}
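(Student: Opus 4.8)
The plan is to reduce the rows-and-columns version to the rows-only version already established in Theorem~\ref{thm:circR}, exploiting the symmetry between rows and columns. First I would observe that $M$ has the circular-ones property for rows and columns if and only if both $M$ and $M\trans$ have the circular-ones property for rows; consequently, by Theorem~\ref{thm:circR}, $M$ fails the property for rows and columns precisely when $M$ contains some matrix of $\ForbRow$ as a configuration \emph{or} $M\trans$ does, i.e. when $M$ contains a matrix of $\ForbRow\cup\ForbRow\trans$ as a configuration. So the first task is purely combinatorial: show that the \emph{minimal} members of $\ForbRow\cup\ForbRow\trans$ under the containment-as-configuration order are exactly the matrices in $\ForbRowCol\cup\ForbRowCol\trans$. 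Recall $\ForbRow=\{a\miop\MIast k:k\geq 3,\ a\in A_k\}\cup\{\MIV,\overline{\MIV},\MVast,\overline{\MVast}\}$; by Lemma~\ref{lem:MVast} the four sporadic matrices $\MIV,\overline{\MIV},\MVast,\overline{\MVast}$ are mutual row-complementations of one another, so $\{\MIV,\overline{\MIV},\MVast,\overline{\MVast}\}\trans$ contributes, after discarding non-minimal elements, only $\MVast$ and $\overline{\MVast}$ (since $\MIV=(\MVast)\trans$ up to configuration and $\MVast$ already appears — this is exactly the kind of incidence to check by inspection). The real work is with the family $a\miop\MIast k$: I must decide which of the matrices $(a\miop\MIast k)\trans$ contain, as a proper configuration, some member of $\ForbRow$, and conversely which members of $\ForbRow$ contain some $(a\miop\MIast k)\trans$ properly. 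The matrices that \emph{survive} as minimal ones are precisely $\MIast k$ and $\overline{\MIast k}$ for all $k\geq 3$ (these are the transposes of themselves, essentially, since $\MIast k$ is a square-ish matrix whose transpose is again a Tucker-type matrix up to configuration — again to be verified), together with the finitely many $a\miop\MIast{|a|}$ for $a\in\ARowCol$; every other $(a\miop\MIast k)\trans$ with $a\notin\{0\ldots0,1\ldots1\}\cup\ARowCol$ must be shown to properly contain one of the retained matrices, hence be non-minimal.

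The heart of the argument is therefore the following claim: for $k\geq 3$ and a bracelet $a\in A_k$ with $a$ not all-zero and not all-one, the transpose $(a\miop\MIast k)\trans$ either already equals (up to configuration) some $a'\miop\MIast{k'}$ with $a'\in\ARowCol$, or it properly contains such a matrix as a configuration; and the eight sequences in $\ARowCol$ are exactly those whose associated transposed matrix is minimal. To prove this I would analyze the structure of $a\miop\MIast k$ directly. The matrix $\MIast k$ has rows that are the $k$ ``consecutive pairs'' $e_i+e_{i+1}$ (indices mod $k$) together with a zero column $k+1$; complementing a set $S$ of rows turns those rows into their complements, which have $k-1$ ones among the first $k$ columns plus a $1$ in column $k+1$. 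So the columns of $a\miop\MIast k$ (restricted to the first $k$ of them) encode, for each column index $j\in[k]$, which rows contain $j$: namely rows $j-1$ and $j$ if neither is complemented, their complements otherwise. Transposing, the rows of $(a\miop\MIast k)\trans$ are indexed by $\{1,\ldots,k,k+1\}$ and I can read off their supports explicitly in terms of the runs of $0$'s and $1$'s in the bracelet $a$. The key point is that short bracelets give rise to matrices with a controlled number of distinct column-types, whereas a bracelet with a ``long'' run forces the transposed matrix to contain a large $\MIast{k'}$-configuration for $k'$ roughly the run length; so minimality of $(a\miop\MIast k)\trans$ constrains $a$ to have only short runs, and a finite case analysis pins down exactly the eight sequences in $\ARowCol$ (plus the all-zero and all-one cases, which give $\MIast k$ and $\overline{\MIast k}$). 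Lemmas~\ref{lem:a=a'-mod-rot&perm}, \ref{lem:MIast3}, and~\ref{lem:MIastgeq4} will be used to ensure I am not double-counting configurations and that the retained list has no repetitions; and Lemma~\ref{lem:MVast} handles the sporadic matrices and confirms $\MVast,\overline{\MVast}$ are genuinely distinct from everything in the $\MIast{}$ family (e.g. by counting ones).

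Having identified the forbidden set, the ``minimal forbidden submatrix'' assertions follow almost formally: each matrix in $\ForbRow\cup\ForbRow\trans$ is a minimal forbidden submatrix for the circular-ones property for rows (by Lemma~\ref{lem:lemmaA} and~\ref{lem:MVast}) or the transpose of such, and since the rows-and-columns property of any proper submatrix $N\neq M$ is inherited whenever both $N$ and $N\trans$ have the rows property, the retained matrices $\ForbRowCol\cup\ForbRowCol\trans$ are minimal forbidden submatrices for the rows-and-columns property. Minimality of the \emph{set} reduces, exactly as in the proof of Theorem~\ref{thm:circR}, to showing no two listed matrices represent the same configuration: within $\ForbRowCol$ this is Lemmas~\ref{lem:MIast3}, \ref{lem:MIastgeq4}, \ref{lem:MVast} plus a ones-count distinguishing $\MIast k$ from $\overline{\MIast k}$ and from the $a\miop\MIast{|a|}$; and a matrix of $\ForbRowCol$ coincides in configuration with one of $\ForbRowCol\trans$ only for those few self-transpose-type matrices, which are deliberately listed once. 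I expect the main obstacle to be the run-length case analysis for the transposed family $a\miop\MIast k$: correctly enumerating, for every bracelet $a$, the distinct row- and column-types of $(a\miop\MIast k)\trans$ and exhibiting the explicit row and column maps witnessing a proper $\MIast{k'}$- or $\overline{\MIast{k'}}$-configuration when $a$ has a long run — essentially a transposed analogue of Lemma~\ref{lem:lemmaB} — is where all the bookkeeping lives; once that structural reduction is in place, everything else is a finite verification in the spirit of the equalities~\eqref{eq:MVast}.
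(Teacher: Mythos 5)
Your overall architecture is the same as the paper's: the rows-and-columns property is the conjunction of the rows property for $M$ and for $M\trans$, so by Theorem~\ref{thm:circR} the minimal forbidden submatrices are exactly the minimal elements of $\ForbRow\cup\ForbRow\trans$ under containment as a configuration, and the work is to show that these are precisely the matrices in $\ForbRowCol\cup\ForbRowCol\trans$. The paper does this via a run (``block'') analysis of the bracelet $a$, exactly as you propose, concluding that a non-constant bracelet avoiding all the bad local patterns must have $k\leq 6$ and lie in $\ARowCol$ (this is Lemma~\ref{lem:circRC}, fed into the constructive Theorem~\ref{thm:algo-circRC}).

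However, two of the specific mechanisms you sketch are wrong and would need to be replaced. First, your claim that a long run in $a$ forces $(a\miop\MIast k)\trans$ to contain a large $\MIast{k'}$-configuration with $k'$ comparable to the run length fails: for example, with $a=0000001$ the complemented row of $a\miop\MIast 7$ has six ones, so no $6\times 7$ submatrix of the transpose can have at most two ones in every column, and hence no $\MIast 6$ occurs. The correct witnesses of non-minimality are always the two smallest transposed matrices $\MIast 3\trans$ and $\overline{\MIast 3\trans}$ (of size $4\times 3$), extracted from eleven explicit local patterns of length at most $7$ occurring cyclically in $a$ (the paper's set $\mathcal P$ and Table~\ref{tab:patterns}); the block analysis then shows that avoiding all eleven patterns forces $a$ to be constant or to lie in $\ARowCol$. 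Second, $\MIV$ is $4\times 6$ while $(\MVast)\trans$ is $6\times 4$, so they cannot represent the same configuration; the reason $\MIV$ and $\overline{\MIV}$ drop out of the minimal list is again that they contain $\overline{\MIast 3\trans}$ and $\MIast 3\trans$, respectively (take columns $6,2,4$). Likewise $\MIast k$ is not essentially its own transpose: $\MIast k\trans$ is a genuinely different configuration and is retained separately inside $\ForbRowCol\trans$. None of these corrections changes your plan's skeleton, but the first point is where the entire content of the theorem lives, so as written the proposal does not yet contain a proof of the key step.
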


The proof of Theorem~\ref{thm:circRC} will be given at the end of this subsections after some results. For that purpose, we need a few more definitions. Let $\mathcal P$ be the set of sequences of $x$'s and $y$'s listed in the first column of Table~\ref{tab:patterns}. Let $a=a_1a_2\ldots a_k$ be a binary sequence of length $k$. If $b\in\mathcal P$, we say that \emph{$b$ occurs in $a$ at position $i\in[k]$} if $k\geq\vert b\vert$ and $a_i,a_{i+1},\ldots,a_{i+\vert b\vert-1}=b'$ where subindices are modulo $k$ and $b'$ is a sequence that arises from $b$ by either (i) replacing $x$ by $1$ and $y$ by $0$ or (ii) replacing $x$ by $0$ and $y$ by $1$.

\begin{table}
\centering\begin{tabular}{cccc}
  Elements of $\mathcal P$ & $\rho$ & $\sigma$\\\hline
 $xyxy$   & $\langle i,i+1,i+2,i+3\rangle$   & $\langle i+2,i+1,k+1\rangle$\small\\
 $xyxxy$  & $\langle i,i+1,i+2,i+4\rangle$   & $\langle i+2,i+1,k+1\rangle$\\
 $yxxyx$  & $\langle i+4,i+3,i+2,i\rangle$   & $\langle i+3,i+4,k+1\rangle$\smallskip\\
 $yxxxxy$ & $\langle i+1,i+2,i+4,i+5\rangle$ & $\langle i+4,i+1,i+3\rangle$\\
 $xxxxxy$ & $\langle i+1,i+2,i+4,i+5\rangle$ & $\langle i+4,i+1,i+3\rangle$\smallskip\\
 $xyxxxy$ & $\langle i,i+1,i+2,i+5\rangle$ & $\langle i+2,i+1,i+4\rangle$\\
 $yxxxyx$ & $\langle i+5,i+4,i+3,i\rangle$ & $\langle i+4,i+5,i+2\rangle$\smallskip\\
 $xyyxxx$ & $\langle i,i+3,i+5,i+2\rangle$ & $\langle i+5,i+1,i+4\rangle$\\
 $xxxyyx$ & $\langle i+5,i+2,i,i+3\rangle$ & $\langle i+1,i+5,i+2\rangle$\smallskip\\
 $xyyxxyy$ & $\langle i,i+3,i+4,i+6\rangle$ & $\langle i+5,i+1,i+3\rangle$\smallskip\\
 $xyyyxxx$ & $\langle i,i+4,i+5,i+2\rangle$ & $\langle i+6,i+1,i+4\rangle$
\end{tabular}\caption{Row map $\rho$ and column map $\sigma$ producing $\MIast 3\trans$ or $\overline{\MIast 3\trans}$, where sums involving $i$ are modulo $k$}\label{tab:patterns}
\end{table}
\begin{lem}\label{lem:patterns} Let $k\geq 3$ and let $a$ be a binary sequence of length $k$. Suppose that some $b\in\mathcal P$ occurs in $a$ at position $i$ and that $\rho$ and $\sigma$ are the maps given by the row of Table~\ref{tab:patterns} whose first-column entry is $b$ and where sums involving $i$ are modulo $k$. Then, $(a\miop\MIast k)_{\rho,\sigma}$ equals $\MIast 3\trans$ or $\overline{\MIast 3\trans}$, depending on whether the occurrence of $b$ in $a$ at position $i$ is with $x$ replaced by $1$ and $y$ by $0$, or with $x$ replaced by $0$ and $y$ by $1$, respectively.\end{lem}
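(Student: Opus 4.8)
The plan is to establish the lemma by a direct finite verification for each of the eleven patterns in $\mathcal P$, after one reduction: to the case where the occurrence of $b$ in $a$ is of type (i), i.e.\ with $x$ replaced by $1$ and $y$ by $0$. For any binary sequence $a'$ and any matrix $M$ with $\vert a'\vert$ rows, complementing the rows of $M$ indexed by $\{r:a'_r=1\}$ and then complementing every entry is the same as complementing the rows indexed by $\{r:a'_r=0\}$; that is, $\overline{a'\miop M}=\overline{a'}\miop M$, where $\overline{a'}$ arises from $a'$ by interchanging $0$'s and $1$'s. Since taking a submatrix commutes with complementation, $(\overline{a'}\miop M)_{\rho,\sigma}=\overline{(a'\miop M)_{\rho,\sigma}}$. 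Now, if $b$ occurs in $a$ at position $i$ with type (ii), then $b$ occurs in $\overline a$ at position $i$ with type (i), with the same maps $\rho$ and $\sigma$ read off Table~\ref{tab:patterns}; hence, granting the type-(i) case, $(a\miop\MIast k)_{\rho,\sigma}=\overline{(\overline a\miop\MIast k)_{\rho,\sigma}}=\overline{\MIast 3\trans}$, exactly as claimed. So it suffices to treat type-(i) occurrences. (One could also cyclically shift $a$ so that $i=1$, by the shift part of Lemma~\ref{lem:a=a'-mod-rot&perm}, to avoid modular index reductions; this is cosmetic.)

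For the verification, fix $b\in\mathcal P$ and a type-(i) occurrence at position $i$, so $a_i,a_{i+1},\ldots,a_{i+\vert b\vert-1}$ are prescribed by $b$. The maps $\rho$ and $\sigma$ from the row of Table~\ref{tab:patterns} headed by $b$ select four rows and three columns of $a\miop\MIast k$, all with indices in $\{i,i+1,\ldots,i+\vert b\vert-1\}$ (modulo $k$) together with, possibly, $k+1$. Using the explicit description of $a\miop\MIast k$ — its $(r,c)$-entry equals $a_r$ when $c=k+1$, and for $c\in[k]$ it equals $a_r$ if neither $c\equiv r$ nor $c\equiv r+1\pmod k$ and $1-a_r$ otherwise — I would compute the selected $4\times3$ submatrix and check that it equals $\MIast 3\trans=\left(\begin{smallmatrix}1&0&1\\1&1&0\\0&1&1\\0&0&0\end{smallmatrix}\right)$. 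For example, for $b=xyxy$ one has $a_i=a_{i+2}=1$, $a_{i+1}=a_{i+3}=0$, $\rho=\langle i,i+1,i+2,i+3\rangle$, $\sigma=\langle i+2,i+1,k+1\rangle$, and the four rows come out $(1,0,1)$, $(1,1,0)$, $(0,1,1)$, $(0,0,0)$, as required; the remaining ten patterns are dispatched by equally short computations. The list can be shortened by observing that several patterns are reversals of others (e.g.\ $xyxxy$ and $yxxyx$, or $xyyxxx$ and $xxxyyx$), so that the reversal part of Lemma~\ref{lem:a=a'-mod-rot&perm} lets one member of such a pair settle the other.

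The point that needs care — more bookkeeping than ingenuity — is the cyclic arithmetic when $k$ equals $\vert b\vert$. In that case the index window $\{i,\ldots,i+\vert b\vert-1\}$ exhausts $[k]$, and besides the trivial coincidence $c\equiv r$ exactly one extra one appears: row $i+\vert b\vert-1$ of $\MIast k$ carries its second $1$ in column $i$. But a glance at Table~\ref{tab:patterns} shows that no column map $\sigma$ listed there ever selects column $i$ (the $0$-offset column), so this extra coincidence never enters the selected submatrix; hence the computation above is valid uniformly for every $k\geq\vert b\vert$, and the whole proof reduces to the eleven (or, after the reversal reduction, fewer) finite checks, which is where essentially all the work lies.
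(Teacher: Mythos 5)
Your proof is correct and takes essentially the same approach as the paper's: a direct finite verification of the selected $4\times 3$ submatrix for each row of Table~\ref{tab:patterns}, with one representative pattern worked out and the rest declared analogous. The only differences are presentational --- the paper handles both polarities at once by keeping $x$ and $y$ symbolic where you reduce type~(ii) to type~(i) by complementation, and you are more explicit than the paper about the wrap-around coincidence when $k=\vert b\vert$ --- so nothing further is needed.
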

\begin{proof} Suppose, for instance, that $b=yxxyx$ occurs in $a$ at position $i$, where $(x,y)=(1,0)$ or $(x,y)=(0,1)$. Thus, $a\miop\MIast k$ contains the following matrix as a configuration, where the labels indicate the rows and columns producing it and sums involving $i$ are modulo $k$:
\def\VR{\kern-\arraycolsep\strut\vrule &\kern-\arraycolsep}
\def\vr{\kern-\arraycolsep & \kern-\arraycolsep}
\[ \bordermatrix{
                ~   & i+1 & i+2 & i+3 & i+4 & \vr k+1 \cr
                i   & x & y & \boldsymbol{y} & \boldsymbol{y} & \VR \boldsymbol{y}\cr
                i+1 & y & y & x & x &\VR x\cr
                i+2 & x & y & \boldsymbol{y} & \boldsymbol{x} & \VR \boldsymbol{x}\cr
                i+3 & y & y & \boldsymbol{x} & \boldsymbol{x} & \VR \boldsymbol{y}\cr
                i+4 & x & x & \boldsymbol{x} & \boldsymbol{y} & \VR \boldsymbol{x}\cr} \]
Bold entries show that, if $\rho$ and $\sigma$ are the maps given by the third row of Table~\ref{tab:patterns}, then $(a\miop\MIast k)_{\rho,\sigma}$ equals $\MIast 3\trans$ or $\overline{\MIast 3\trans}$, depending on whether $(x,y)$ equals $(1,0)$ or $(0,1)$, respectively. For the remaining sequences $b$ in the set $\mathcal P$, the proof is analogous.\end{proof}

A partial converse of the above result is given by our next lemma.

\begin{lem}\label{lem:circRC} If $k\geq 3$ and $a\in A_k$, then the following assertions are equivalent:
\begin{enumerate}[(i)]
 \item\label{itlem:1} $a\miop\MIast k$ contains no $\MIast 3\trans$ and no $\overline{\MIast 3\trans}$ as configurations;

 \item\label{itlem:2} $b$ does not occur in $a$ for any $b\in\mathcal P$;
 
 \item\label{itlem:3} $a\miop\MIast k\in\ForbRowCol$.
\end{enumerate}
\end{lem}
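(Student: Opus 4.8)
The plan is to prove the three implications forming the cycle \ref{itlem:1}$\Rightarrow$\ref{itlem:2}$\Rightarrow$\ref{itlem:3}$\Rightarrow$\ref{itlem:1}. The first is free: it is exactly the contrapositive of Lemma~\ref{lem:patterns}, which, from an occurrence of any $b\in\mathcal P$ in $a$, produces maps (those listed in Table~\ref{tab:patterns}) realizing $\MIast 3\trans$ or $\overline{\MIast 3\trans}$ as a configuration in $a\miop\MIast k$.

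The substantive step is \ref{itlem:2}$\Rightarrow$\ref{itlem:3}. Suppose $a\in A_k$ contains no pattern of $\mathcal P$; I will show that $a$ is $0^k$, $1^k$, or one of the eight sequences of $\ARowCol$, which by the definition of $\ForbRowCol$ yields $a\miop\MIast k\in\ForbRowCol$. If $a$ is constant then $a\miop\MIast k\in\{\MIast k,\overline{\MIast k}\}$ — this also disposes of $k=3$, where $A_3=\{000,111\}$ — so assume $k\geq4$ and $a$ nonconstant, and write $a$, read cyclically, as an alternating concatenation of $t\geq1$ maximal blocks of $0$'s and $t$ maximal blocks of $1$'s. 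A short chain of deductions off the patterns, using that $b\in\mathcal P$ occurs in $a$ iff one of its two symbol substitutions is a factor of the cyclic word $a$, pins down the block lengths: $xxxxxy$ (which reads $111110$ or $000001$) forbids blocks of length $\geq5$; $xyxy$ forbids two cyclically consecutive blocks both of length $1$; applying $xyxxy$ (reading $10110$ or $01001$) to a putative length-$1$ block and its successor then rules out length-$1$ blocks altogether, so every block has length in $\{2,3,4\}$; $xyyxxx$ and $xxxyyx$ (reading $100111/011000$ and $111001/000110$) force every neighbour of a length-$2$ block to have length $2$; and, once every block has length $\geq3$, $xyyyxxx$ (reading $1000111$ or $0111000$) forces every block to have length $4$. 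Consequently, if $t\geq2$ then all blocks share one common length $\ell\in\{2,4\}$, so $a=(0^\ell1^\ell)^t$ with $k=4t\geq8$, and $a$ then contains $xyyxxyy$ (reading $0110011$) if $\ell=2$ and $yxxxxy$ (reading $011110$) if $\ell=4$ — a contradiction. Hence $t=1$ and $a=0^p1^q$; from $xxxxxy$, $yxxxxy$ and $xyyyxxx$, together with direct verification in the finitely many cases $k\in\{4,5,6\}$ (where the longer members of $\mathcal P$ have length $\geq k$ and so constrain $0^p1^q$ only through its rotations), one obtains that $0^p1^q$ is pattern-free precisely when $a\in\ARowCol$.

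For \ref{itlem:3}$\Rightarrow$\ref{itlem:1}, note first that $a\miop\MIast k$ has $k$ rows and $k+1$ columns, hence can never be $\MVast$ or $\overline{\MVast}$, so $a\miop\MIast k\in\ForbRowCol$ forces it to be $\MIast k$, $\overline{\MIast k}$, or $a'\miop\MIast{|a'|}$ with $a'\in\ARowCol$. For $\MIast k$: if $k=3$ it has only three rows and contains no $4\times3$ configuration; if $k\geq4$, then $\MIast k$ has exactly two $1$'s per row and an all-zero last column, so in a $4\times3$ submatrix whose three columns all have sum $2$ the last column is unused and, for each chosen column $j\in[k]$, rows $j-1$ and $j$ are among the four chosen rows; since $k\geq4$, the three chosen columns contain at most two pairs of cyclically consecutive indices of $[k]$, so at most two of the four rows have submatrix-sum $2$ — whereas $\MIast 3\trans$ needs three rows of sum $2$ and $\overline{\MIast 3\trans}$ needs a row of sum $3$, impossible since every row of $\MIast k$ has only two $1$'s. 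The case $\overline{\MIast k}$ follows by complementation (complementing all entries interchanges copies of $\MIast 3\trans$ and of $\overline{\MIast 3\trans}$), and each of the eight matrices $a'\miop\MIast{|a'|}$, $a'\in\ARowCol$, of size at most $6\times7$, is handled by the same row-sum bookkeeping done by inspection.

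The main obstacle is the block-structure classification in \ref{itlem:2}$\Rightarrow$\ref{itlem:3}: one must confirm that these eleven patterns are exactly enough to eliminate every bracelet except the constant ones and the eight in $\ARowCol$, and in particular the borderline cases $k\in\{4,5,6\}$ — where the longer patterns of $\mathcal P$ bite only after passing to a rotation of $a$ — have to be checked by hand. An alternative route for \ref{itlem:3}$\Rightarrow$\ref{itlem:1}, and more uniformly for \ref{itlem:2}$\Rightarrow$\ref{itlem:1}, would be to prove the full converse of Lemma~\ref{lem:patterns} directly, by enumerating all row and column maps that can embed a $4\times3$ configuration $\MIast 3\trans$ or $\overline{\MIast 3\trans}$ into $a\miop\MIast k$ and reading off a pattern in each case; this is essentially the same bookkeeping organized differently.
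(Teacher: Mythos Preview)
Your overall architecture matches the paper's: the same cycle \ref{itlem:1}$\Rightarrow$\ref{itlem:2}$\Rightarrow$\ref{itlem:3}$\Rightarrow$\ref{itlem:1}, the block decomposition of the bracelet for \ref{itlem:2}$\Rightarrow$\ref{itlem:3}, and inspection for \ref{itlem:3}$\Rightarrow$\ref{itlem:1}. The gap is in the block analysis. Your claim that ``applying $xyxxy$ \ldots\ then rules out length-$1$ blocks altogether'' is not correct: the pattern $xyxxy$ (equivalently $yxxyx$) only witnesses a $1$-block adjacent to a $2$-block. If a $1$-block of value $v$ sits next to a $3$-block of value $w$, the relevant cyclic factor is $w\,v\,w\,w\,w\,v$, which is detected by $xyxxxy$ (or $yxxxyx$), not by $xyxxy$; if the neighbour has length $4$, the factor is $v\,w\,w\,w\,w\,v$, detected by $yxxxxy$. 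Since you have only bounded block lengths by $4$ (via $xxxxxy$), all three neighbour lengths $2,3,4$ must be excluded separately, and the latter two are simply not covered by the patterns you invoke at that point.

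The fix is cheap and is exactly what the paper does: first use \emph{both} $yxxxxy$ and $xxxxxy$ to cap block lengths at $3$ (a length-$4$ block gives $y\,xxxx\,y$, hence $yxxxxy$; a length $\geq 5$ block gives $xxxxxy$), and then eliminate $1$-blocks using $xyxy$, $xyxxy/yxxyx$, and $xyxxxy/yxxxyx$ for neighbours of length $1$, $2$, $3$ respectively. With blocks confined to lengths $2$ and $3$, your remaining deductions (now needing only $xyyxxx/xxxyyx$, $xyyxxyy$, and $xyyyxxx$) go through for $k\geq 7$ and yield the desired contradiction; the cases $k\in\{4,5,6\}$ are then the finite check you already planned. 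Your treatment of \ref{itlem:3}$\Rightarrow$\ref{itlem:1} is more explicit than the paper's bare ``by inspection'' and is fine.
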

\begin{proof} \eqref{itlem:1}${}\Rightarrow{}$\eqref{itlem:2} Follows from Lemma~\ref{lem:patterns}.

\eqref{itlem:2}${}\Rightarrow{}$\eqref{itlem:3} Suppose no $b\in\mathcal P$ occurs in $a$. If $a$ is constant (i.e., consists entirely of zeros or entirely of ones), then $a\miop\MIast k$ equals $\MIast k$ or $\overline{\MIast k}$, both of which belong $\ForbRowCol$. Thus, we assume, without loss of generality, that $a$ is not constant.

Let $a=a_1a_2\ldots a_k$. Since $a$ is a bracelet which is not constant, $a_1=0$ and $a_k=1$. If $i,j\in[k]$ and $i\leq j$ we say that the ordered pair $(i,j)$ is a \emph{block} if $a_{i-1}\neq a_i=a_{i+1}=\cdots=a_j\neq a_{j+1}$, where $a_0$ stands for $1$ and $a_{k+1}$ stands for $0$. Roughly speaking, a block is a maximal constant consecutive subsequence of $a$. The \emph{length} of a block $(i,j)$ is $j-i+1$. If the length of a block $(i,j)$ is $\ell$, we say that $(i,j)$ is an \emph{$\ell$-block}. We say that block $(i_2,j_2)$ \emph{follows} block $(i_1,j_1)$ if $i_2=j_1+1\pmod k$. A block is \emph{consecutive} to another block if one of them follows the other. By construction, the number $t$ of different blocks is even and, consequently, if $\ell_1,\ldots,\ell_t$ are their lengths, then $k=\ell_1+\cdots+\ell_t$ is a partition of $k$ into an even number of parts.

We have the following facts.
\begin{enumerate}[{Fact }1:] 
 \item\label{claim:1}\emph{If $k\geq 4$, then no two $1$-blocks are consecutive and, as a consequence, at most half of the blocks are $1$-blocks.} In fact, if there were two consecutive $1$-blocks, then $xyxy$ would occur in $a$. Thus, if more than half of the blocks were $1$-blocks, two of them would be consecutive. 
 
 \item\label{claim:2}\emph{If $k\geq 5$, then no $1$-block is consecutive to a $2$-block.} Otherwise, $xyxxy$ or $yxxyx$ would occur in $a$.
 
 \item\label{claim:3}\emph{If $k\geq 6$, then each block has length at most $3$.} Otherwise, $yxxxxy$ or $xxxxxy$ would occur in $a$.

 \item\label{claim:4}\emph{If $k\geq 6$, then no $1$-block is consecutive to a $3$-block.} Otherwise, $xyxxxy$ or $yxxxyx$ would occur in $a$.

 \item\label{claim:5}\emph{If $k\geq 6$, then no $2$-block is consecutive to a $3$-block.} Otherwise, $xyyxxx$ or $xxxyyx$ would occur in $a$.

 \item\label{claim:6}\emph{If $k\geq 7$, then no two $2$-blocks are consecutive.} Otherwise, $xyyxxyy$ would occur in $a$ (because the occurrence of $xyyxxyx$ would contradict Fact~\ref{claim:2}).
 
 \item\label{claim:7}\emph{If $k\geq 7$, then no two $3$-blocks are consecutive.} Otherwise, $xyyyxxx$ would occur in $a$.
\end{enumerate}

If $k\geq 7$, we reach a contradiction because Fact~\ref{claim:3} implies each block has length $1$, $2$, or $3$, but no two such blocks can be consecutive without violating one of the above facts. Thus, we assume, without loss of generality, that $k\leq 6$. Suppose first that $k=6$. Hence, Facts~\ref{claim:1} to~\ref{claim:4} imply that there are neither $1$-blocks nor blocks of length greater than $3$ and, necessarily, $a$ equals $000111$ because $6=3+3$ is the only partition of $6$ into an even number of parts and such that each summand is either $2$ or $3$. Finally, if $k=4$ or $k=5$, then $a$ equals $0001$, $0011$, $0111$, $00001$, $00011$, $00111$, or $01111$ by virtue of Fact~\ref{claim:1} because the only partitions of $4$ and $5$ into an even number of parts and having at most half of the summands equal to $1$ are $4=2+2$, $4=3+1$, $5=4+1$, and $5=3+2$. In all cases, $a\in\ARowCol$ and, by definition, $a\miop\MIast k\in\ForbRowCol$.

\eqref{itlem:3}${}\Rightarrow{}$\eqref{itlem:1} It follows by inspection of the matrices in the set $\ForbRowCol$.\end{proof}

Below, we prove that some matrix in the set $\ForbRowCol\cup\ForbRowCol\trans$ can be found in any matrix not having the circular-ones property for rows and columns, in linear time. We first prove the following intermediate result.

\begin{algorithm2e}[t!]\DontPrintSemicolon
 \KwIn{A matrix $M$ not having the circular-ones property for rows}
 \KwOut{Maps $\rho_D$ and $\sigma_D$ such that $M_{\rho_D,\sigma_D}\in\ForbRowCol\cup\{\MIast 3\trans,\overline{\MIast 3\trans}\}$}

 Find maps $\rho_F$ and $\sigma_F$ such that $M_{\rho_F,\sigma_F}\in\ForbRow$ and the sequence $c$ of entries in the last column of $M_{\rho_F,\sigma_F}$\;\label{algo2:step1}
 
 Let $k$ and $\ell$ be the number of rows and columns, respectively, of $M_{\rho_F,\sigma_F}$\;\label{algo2:step1b}

 \If{$M_{\rho_F,\sigma_F}=c\miop\MIast k$ and some $b\in\mathcal P$ occurs in $c$ at some position $i\in[k]$}
   {\label{algo2:cond1}
       Let $\rho$ and $\sigma$ as specified in the row of Table~\ref{tab:patterns} with first-column entry $b$\;\label{algo2:step2}
       \Return $\rho_D:=\rho\circ\rho_F$ and $\sigma_D:=\sigma\circ\sigma_F$\;\label{algo2:step3}
   }
 \ElseIf{$M_{\rho_F,\sigma_F}\in\{\MIV,\overline\MIV\}$}{\label{algo2:cond2}
      \Return $\rho_D:=\rho_F$ and $\sigma_D:=\langle 6,2,4\rangle\circ\sigma_F$}\label{algo2:step5}
 \Else{\Return $\rho_D:=\rho_F$ and $\sigma_D:=\sigma_F$}\label{algo2:step6}
 \caption{Finds some matrix in the set $\ForbRowCol\cup\{\MIast 3\trans,\overline{\MIast 3\trans}\}$ contained in $M$ as a configuration}\label{algo:2}
\end{algorithm2e}

\begin{lem}\label{lem:algo-circRC} Given a matrix $M$ not having the circular-ones property for rows, Algorithm~\ref{algo:2} finds a matrix in the set $\ForbRowCol\cup\{\MIast 3\trans,\overline{\MIast 3\trans}\}$ contained in $M$ as a configuration. Moreover, Algorithm~\ref{algo:2} can be implemented to run in $O(\size(M))$ time.\end{lem}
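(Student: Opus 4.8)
The plan is to verify correctness and the running-time bound by tracing through the three branches of Algorithm~\ref{algo:2}, relying on Corollary~\ref{cor:circR} for the outer call and on Lemmas~\ref{lem:patterns} and~\ref{lem:circRC} for the case analysis. First I would invoke Corollary~\ref{cor:circR}: since $M$ does not have the circular-ones property for rows, Algorithm~\ref{algo:1} produces in line~\ref{algo2:step1} maps $\rho_F,\sigma_F$ with $M_{\rho_F,\sigma_F}\in\ForbRow$ together with the sequence $c$ of entries in its last column, in $O(\size(M))$ time. Recall $\ForbRow=\{a\miop\MIast k:k\geq 3,\ a\in A_k\}\cup\{\MIV,\overline{\MIV},\MVast,\overline{\MVast}\}$, so $M_{\rho_F,\sigma_F}$ is either of the form $c\miop\MIast k$ with $c\in A_k$ (this is the case where the matrix has a last all-zero column appended, so the recorded $c$ is indeed the defining bracelet) or one of the four sporadic matrices $\MIV,\overline{\MIV},\MVast,\overline{\MVast}$.

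Next I would dispatch on which case occurred. If $M_{\rho_F,\sigma_F}=c\miop\MIast k$ and some $b\in\mathcal P$ occurs in $c$ at a position $i$ (the condition of line~\ref{algo2:cond1}), then by Lemma~\ref{lem:patterns} the maps $\rho,\sigma$ read off from the corresponding row of Table~\ref{tab:patterns} satisfy $(c\miop\MIast k)_{\rho,\sigma}\in\{\MIast 3\trans,\overline{\MIast 3\trans}\}$; composing, $M_{\rho\circ\rho_F,\sigma\circ\sigma_F}=(M_{\rho_F,\sigma_F})_{\rho,\sigma}\in\{\MIast 3\trans,\overline{\MIast 3\trans}\}$, so the output of line~\ref{algo2:step3} is correct. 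If instead $M_{\rho_F,\sigma_F}=c\miop\MIast k$ but no $b\in\mathcal P$ occurs in $c$, then by the implication $\eqref{itlem:2}\Rightarrow\eqref{itlem:3}$ of Lemma~\ref{lem:circRC} we already have $M_{\rho_F,\sigma_F}\in\ForbRowCol$, so the identity-composition output of line~\ref{algo2:step6} is correct; the same conclusion holds trivially when $M_{\rho_F,\sigma_F}\in\{\MVast,\overline{\MVast}\}\subseteq\ForbRowCol$. The remaining subcase is $M_{\rho_F,\sigma_F}\in\{\MIV,\overline{\MIV}\}$ (line~\ref{algo2:cond2}); here one checks by direct inspection of the $4\times 6$ matrix $\MIV$ that restricting to columns $6,2,4$ (in that order) and keeping all four rows yields $\MIast 3\trans$, and correspondingly $\overline{\MIV}$ yields $\overline{\MIast 3\trans}$, so $M_{\rho_F,\langle 6,2,4\rangle\circ\sigma_F}\in\{\MIast 3\trans,\overline{\MIast 3\trans}\}$ and the output of line~\ref{algo2:step5} is correct. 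This exhausts all cases and establishes that the algorithm returns maps $\rho_D,\sigma_D$ with $M_{\rho_D,\sigma_D}\in\ForbRowCol\cup\{\MIast 3\trans,\overline{\MIast 3\trans}\}$.

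For the running time: line~\ref{algo2:step1} costs $O(\size(M))$ by Corollary~\ref{cor:circR}, and the returned $M_{\rho_F,\sigma_F}$ has size $O(\size(M))$, so in particular $k,\ell\in O(\size(M))$. Reading off $k$ and $\ell$ in line~\ref{algo2:step1b} and testing, for each of the (constantly many) patterns $b\in\mathcal P$ and each of the $k$ starting positions $i$, whether $b$ occurs in $c$ at position $i$ takes $O(k)=O(\size(M))$ time; the map lookups in Table~\ref{tab:patterns} and the constant-size branch of line~\ref{algo2:cond2} are $O(1)$; composing two maps of total size $O(\size(M))$ is $O(\size(M))$. Hence the algorithm runs in $O(\size(M))$ time overall.

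The main obstacle, and the one step I would be most careful about, is verifying the table-driven reductions of Lemma~\ref{lem:patterns} are applied correctly here — in particular that the sequence $c$ returned by Algorithm~\ref{algo:1} is genuinely the bracelet $a$ defining $a\miop\MIast k$ (so that ``$b$ occurs in $c$'' is the right condition to test), and that the index arithmetic for $\rho,\sigma$ modulo $k$ composes correctly with $\rho_F,\sigma_F$. The rest is a finite inspection (the $\MIV$/$\overline{\MIV}\to\MIast 3\trans$ check) together with bookkeeping that each branch of Lemma~\ref{lem:circRC} has been covered.
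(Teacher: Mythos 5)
Your proof is correct and follows essentially the same route as the paper's: Corollary~\ref{cor:circR} for line~1, Lemma~\ref{lem:patterns} for the pattern branch, Lemma~\ref{lem:circRC} together with $\{\MVast,\overline{\MVast}\}\subseteq\ForbRowCol$ for the default branch, direct inspection for the $\MIV$/$\overline{\MIV}$ branch, and the same accounting for the time bound. One cosmetic slip: columns $6,2,4$ of $\MIV$ actually yield $\overline{\MIast 3\trans}$ (and $\overline{\MIV}$ yields $\MIast 3\trans$), not the other way around as you state — but since both matrices lie in the target set this does not affect the argument.
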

\begin{proof} We first prove the correctness. Let $\rho_F$, $\sigma_F$, $c$, $k$, and $\ell$ be as specified in lines~\ref{algo2:step1} and~\ref{algo2:step1b}. If the condition of line~\ref{algo2:cond1} holds, then the output given in line~\ref{algo2:step3} is correct because $M_{\rho\circ\rho_F,\sigma\circ\sigma_F}=(M_{\rho_F,\sigma_F})_{\rho,\sigma}=(c\miop\MIast{k})_{\rho,\sigma}$, which equals $\MIast 3\trans$ or $\overline{\MIast 3\trans}$ by virtue of Lemma~\ref{lem:patterns}. If the condition of line~\ref{algo2:cond2} holds, then the output given in line~\ref{algo2:step5} is correct because $M_{\rho_F,\langle 6,2,4\rangle\circ\sigma_F}=(M_{\rho_F,\sigma_F})_{\id_{4},\langle 6,2,4\rangle}$, which equals $\overline{\MIast 3\trans}$ or $\MIast 3\trans$, depending on whether $M_{\rho_F,\sigma_F}$ equals $\MIV$ or $\overline{\MIV}$, respectively (as can be verified by inspection). It only remains to prove the correctness when the output is given in line~\ref{algo2:step6}. Since $M_{\rho_F,\sigma_F}\in\ForbRow$, either $M_{\rho_F,\sigma_F}=c\miop\MIast k$ (and thus $c\in A_k$) or $M_{\rho_F,\sigma_F}\in\{\MIV,\overline\MIV,\MVast,\overline\MVast\}$. On the one hand, if $M_{\rho_F,\sigma_F}=c\miop\MIast k$, then the output given in line~\ref{algo2:step6} is correct because Lemma~\ref{lem:circRC} implies $c\miop\MIast k\in\ForbRowCol$ under the assumption that the condition of line~\ref{algo2:cond1} does not hold. On the other hand, if $M_{\rho_F,\sigma_F}\in\{\MIV,\overline\MIV,\MVast,\overline\MVast\}$ holds but the condition of line~\ref{algo2:cond2} does not hold, then $M_{\rho_F,\sigma_F}\in\{\MVast,\overline{\MVast}\}\subseteq\ForbRowCol$. This completes the proof of the correctness.

We now prove the linear time bound. By Corollary~\ref{cor:circR}, line~\ref{algo2:step1} can be performed in $O(\size(M))$ time. As the number of elements of $\mathcal P$ is finite, using a linear-time pattern matching algorithm (e.g., Knuth--Morris--Pratt algorithm~\cite{MR0451916}) we can in $O(k)$ time decide whether some $b\in\mathcal P$ occurs in $c$ at some position $i\in[k]$ and, if affirmative, find such $i$. Thus, whether or not the condition of line~\ref{algo2:cond1} holds can be decided in $O(k)$ time because $M_{\rho_F,\sigma_F}=c\miop\MIast k$ holds if and only if $\ell=k+1$. Similarly, whether or not the condition of line~\ref{algo2:cond2} holds can be decided in $O(1)$ time because it is equivalent to deciding whether $(k,\ell)=(4,6)$ and $c\in\{0011,1100\}$ hold. Clearly, each of the remaining lines can be implemented to run in at most $O(k)$ time. As $k$ is at most the number of rows of $M$, the total running time of Algorithm~\ref{algo:2} is $O(\size(M))$.\end{proof}

\begin{thm}\label{thm:algo-circRC} Given a matrix $M$ not having the circular-ones property for rows and columns, a matrix in the set $\ForbRowCol\cup\ForbRowCol\trans$ contained in $M$ as a configuration can be found in $O(\size(M))$ time.\end{thm}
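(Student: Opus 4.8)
The plan is to reduce the statement to Lemma~\ref{lem:algo-circRC} by separating the two ways in which the circular-ones property for rows and columns can fail. Observe that $M$ has the circular-ones property for rows and columns if and only if both $M$ and $M\trans$ have the circular-ones property for rows. Hence, if $M$ does not have the circular-ones property for rows and columns, then either (a) $M$ does not have the circular-ones property for rows, or (b) $M$ has the circular-ones property for rows but $M\trans$ does not; these two cases are mutually exclusive. I would begin by running the recognition algorithm of Theorem~\ref{thm:booth-and-lueker} on $M$, in $O(\size(M))$ time, to decide which case applies.

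In case (a), I run Algorithm~\ref{algo:2} on $M$ and obtain, by Lemma~\ref{lem:algo-circRC} and in $O(\size(M))$ time, maps $\rho_D,\sigma_D$ with $M_{\rho_D,\sigma_D}\in\ForbRowCol\cup\{\MIast 3\trans,\overline{\MIast 3\trans}\}$. Since $\MIast 3,\overline{\MIast 3}\in\ForbRowCol$ and transposition commutes with complementation, both $\MIast 3\trans$ and $\overline{\MIast 3\trans}$ lie in $\ForbRowCol\trans$; thus $M_{\rho_D,\sigma_D}$ already belongs to $\ForbRowCol\cup\ForbRowCol\trans$, and I return $(\rho_D,\sigma_D)$.

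In case (b), I first compute the transpose $M\trans$. In the assumed list-of-rows representation this can be done in $O(\size(M))$ time by a single pass over the ones of $M$, and $\size(M\trans)=\size(M)$. Since $M\trans$ does not have the circular-ones property for rows, applying Algorithm~\ref{algo:2} to $M\trans$ yields, in $O(\size(M))$ time, maps $\rho,\sigma$ with $N:=(M\trans)_{\rho,\sigma}\in\ForbRowCol\cup\{\MIast 3\trans,\overline{\MIast 3\trans}\}$. Then $M_{\sigma,\rho}=N\trans$, and $N\trans\in\ForbRowCol\trans\cup\{\MIast 3,\overline{\MIast 3}\}\subseteq\ForbRowCol\cup\ForbRowCol\trans$, so I return $(\sigma,\rho)$. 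In either case the total running time is $O(\size(M))$.

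I do not expect a genuine obstacle: all of the combinatorial content has already been packaged into Lemma~\ref{lem:algo-circRC}, and what remains is the bookkeeping of (i) using the Booth--Lueker test to determine whether the failure lies on the row side or the column side, (ii) transposing the sparse representation of $M$ in linear time, and (iii) carrying the transpose through the output maps while checking that $\MIast 3\trans$, $\overline{\MIast 3\trans}$, and their transposes land in $\ForbRowCol\cup\ForbRowCol\trans$. The one point worth spelling out is that transposing $M$ is itself a linear-time operation, so that the detour through $M\trans$ keeps the procedure within the $O(\size(M))$ budget.
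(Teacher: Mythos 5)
Your proposal is correct and follows essentially the same route as the paper: split on whether the failure is on the row side or the column side (deciding via Booth--Lueker), apply Algorithm~\ref{algo:2} to $M$ or to $M\trans$ accordingly, and observe that transposing the output keeps it inside $\ForbRowCol\cup\ForbRowCol\trans$ because $\MIast 3,\overline{\MIast 3}\in\ForbRowCol$. The extra bookkeeping you spell out (linear-time transposition of the sparse representation, swapping the roles of the row and column maps) is exactly what the paper's proof leaves implicit.
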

\begin{proof} If $M$ does not have the circular-ones property for rows, then Algorithm~\ref{algo:2} applied to $M$ gives a row map $\rho_D$ and column map $\sigma_D$ of $M$ such that $M_{\rho_D,\sigma_D}$ belongs to $\ForbRowCol\cup\{\MIast 3\trans,\overline{\MIast 3\trans}\}\subseteq\ForbRowCol\cup\ForbRowCol\trans$. Thus, we assume, without loss of generality, that $M$ does not have the circular-ones property for columns; i.e., $M\trans$ does not have the circular-ones property for rows. Applying Algorithm~\ref{algo:2} to $M\trans$, we obtain a row map $\rho_D$ and column map $\sigma_D$ of $M\trans$ such that $D=(M\trans)_{\rho_D,\sigma_D}$ belongs to $\ForbRowCol\cup\{\MIast 3\trans,\overline{\MIast 3\trans}\}$. Hence, $\sigma_D$ is a row map of $M$, $\rho_D$ as a column map of $M$, and $M_{\sigma_D,\rho_D}=D\trans$ belongs to $\ForbRowCol\trans\cup\{\MIast 3,\overline{\MIast 3}\}\subseteq\ForbRowCol\cup\ForbRowCol\trans$.

The $O(\size(M))$ time bound follows from Theorem~\ref{thm:booth-and-lueker} and Lemma~\ref{lem:algo-circRC}.\end{proof}

We are now ready to prove Theorem~\ref{thm:circRC}.

\begin{proof}[Proof of Theorem~\ref{thm:circRC}] We begin by considering the first assertion of the theorem. The `only if' part is clear since $\ForbRowCol\subseteq\ForbRow$. The `if' part follows from Theorem~\ref{thm:algo-circRC}.

That each of the matrices in the set $\ForbRowCol\cup\ForbRowCol\trans$ is a minimal forbidden submatrix for the circular-ones property for rows and columns can be verified by inspection. Hence, in order to prove that $\ForbRowCol\cup\ForbRowCol\trans$ is a minimal set satisfying the first assertion of the theorem, it is enough to observe that any two different matrices in the set $\ForbRowCol\cup\ForbRowCol\trans$ represent different configurations, which follows easily by inspection since the only pairs of matrices in $\ForbRowCol\cup\ForbRowCol\trans$ having the same number of rows, columns, and ones are $\MIast 3$ and $\overline{\MIast 3}$, and $\MIast 3\trans$ and $\overline{\MIast 3\trans}$.\end{proof}

\section{Forbidden subgraphs for concave-round graphs}\label{sec:concave-round}

In this section, we will give a characterization of concave-round graphs by minimal forbidden induced subgraphs and show that one of these forbidden induced subgraphs can be found in linear time in any graph that is not concave-round. Some small graphs needed in what follows are depicted in Figure~\ref{fig:smallgraphs}. If $G$ is a graph, we will denote by $G^*$ the graph that arises from $G$ by adding a single vertex which is adjacent to no vertex of $G$.

\begin{figure}[t!]
\ffigbox[\textwidth]{%
\ffigbox[\FBwidth]{%
\begin{subfloatrow}
\ffigbox[0.18\textwidth]{\includegraphics{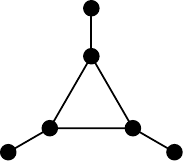}}{\caption{net}}
\ffigbox[0.18\textwidth]{\includegraphics{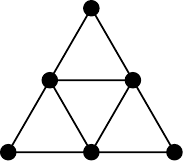}}{\caption{tent}}
\ffigbox[0.18\textwidth]{\includegraphics{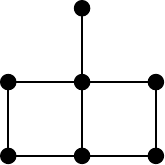}}{\caption{$H_2$}}
\ffigbox[0.18\textwidth]{\includegraphics{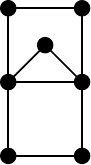}}{\caption{$H_3$}}
\ffigbox[0.18\textwidth]{\includegraphics{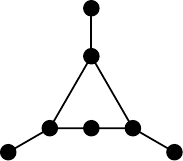}}{\caption{$H_4$}}
\end{subfloatrow}}{}
\bigskip
\ffigbox[\FBwidth]{%
\begin{subfloatrow}
\ffigbox[0.18\textwidth]{\includegraphics{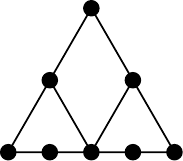}}{\caption{$\BII{2}$}}
\ffigbox[0.18\textwidth]{\includegraphics{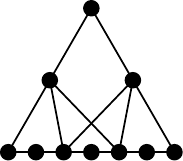}}{\caption{$\BII{3}$}}
\ffigbox[0.18\textwidth]{\includegraphics{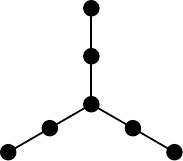}}{\caption{$\BIII{1}$}}
\ffigbox[0.18\textwidth]{\includegraphics{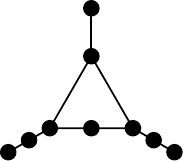}}{\caption{$\BIII{1}$}}
\ffigbox[0.18\textwidth]{\includegraphics{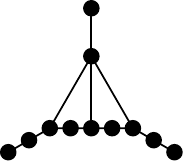}}{\caption{$\BIII{2}$}}
\end{subfloatrow}}{}
}{\caption{Some small graphs}\label{fig:smallgraphs}}
\end{figure}

A bipartite graph $G$ is \emph{biconvex}~\cite{MR1033536} (or \emph{doubly convex}~\cite{MR632418}) if there is a linear ordering of the vertices of $G$ such that the neighborhood of each vertex is an interval in the ordering. We say a graph is \emph{co-biconvex} if its complement is biconvex. Tucker~\cite{MR0379298} proved two characterizations of biconvex graphs, which can be stated in terms of co-biconvex graphs as follows.

\begin{thm}[\cite{MR0379298}]\label{thm:concave-cobip} For each co-bipartite graph $G$, the following assertions are equivalent:
\begin{enumerate}[(i)]
 \item\label{it:cobip1} $G$ is co-biconvex;
 
 \item\label{it:cobip2} $G$ is concave-round;
 
 \item\label{it:cobip3} $G$ contains no induced $\overline{\BII{1}}$, $\overline{\BII{2}}$, $\overline{\BIII{1}}$, $\overline{\BIII{2}}$, $\overline{\BIII{3}}$, or $\overline{C_{2k}}$ for any $k\geq 3$.
\end{enumerate}\end{thm}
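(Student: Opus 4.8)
The plan is to pass to augmented adjacency matrices and use that $G$ is concave-round if and only if its augmented adjacency matrix $M(G)$ has the circular-ones property for rows, together with the fact that the class of concave-round graphs is hereditary (if $H=G[S]$ then $M(H)$ is the submatrix of $M(G)$ induced by the rows and columns indexed by $S$, and the circular-ones property passes to submatrices). Fix a partition $V(G)=X\cup Y$ into cliques of $G$, let $B$ be the $X\times Y$ submatrix of an adjacency matrix of $G$, and order the rows and columns of $M(G)$ by listing $X$ first and $Y$ second, so that $M(G)=\bigl(\begin{smallmatrix}J&B\\B\trans&J\end{smallmatrix}\bigr)$ with all-ones blocks $J$ and with $\overline B$ equal to the bipartite adjacency matrix of $\overline G$ with parts $X$ and $Y$; I would also use the elementary equivalence that $\overline G$ is biconvex if and only if $\overline B$ has the consecutive-ones property for rows and for columns (restricting a single biconvex ordering yields orderings of the two parts, and conversely one concatenates them). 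With this in hand I would prove the cyclic chain (i)$\Rightarrow$(ii)$\Rightarrow$(iii)$\Rightarrow$(i). For (i)$\Rightarrow$(ii): from orderings $x_1<\dots<x_p$ of $X$ and $y_1<\dots<y_q$ of $Y$ witnessing biconvexity of $\overline G$, consider the circular order $x_1,\dots,x_p,y_1,\dots,y_q$ of $V(G)$; each closed neighbourhood is a circular interval, since $N_G[x_i]=X\cup(N_G(x_i)\cap Y)=X\cup(Y\setminus N_{\overline G}(x_i))$ and $Y\setminus N_{\overline G}(x_i)$ is a prefix together with a suffix of the $Y$-order, which merges with the consecutive block $X$ (flanked circularly by $y_q$ and $y_1$) into a circular interval, and symmetrically for $N_G[y_j]$. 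Hence $M(G)$ has the circular-ones property and $G$ is concave-round.

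For (ii)$\Rightarrow$(iii), by heredity it suffices to check that none of $\overline{\BII 1}$, $\overline{\BII 2}$, $\overline{\BIII 1}$, $\overline{\BIII 2}$, $\overline{\BIII 3}$, $\overline{C_{2k}}$ (for each $k\geq 3$) is concave-round. For the five sporadic graphs this is a finite computation, for instance by exhibiting a member of $\ForbRow$ as a configuration of the corresponding (small) augmented adjacency matrix and invoking Theorem~\ref{thm:circR}. For the family $\overline{C_{2k}}$ I would argue uniformly in $k$: complementing a suitable set of rows of $M(\overline{C_{2k}})$ produces a Tucker reduction whose submatrix on the rows indexed by $X$ and the columns indexed by $Y$ is exactly $\overline B$, which equals $\MI k$ up to permutations of rows and of columns (its associated bipartite graph being the cycle $C_{2k}$ itself); hence $M(\overline{C_{2k}})$ fails the circular-ones property by Theorems~\ref{thm:cons-circ} and~\ref{thm:Tucker-matrices}.

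For (iii)$\Rightarrow$(i) I would prove the contrapositive. If $\overline G$ is not biconvex, then $\overline B$ or $\overline B\trans$ fails the consecutive-ones property for rows and so, by Theorem~\ref{thm:Tucker-matrices}, contains a Tucker matrix $T$ as a configuration; consequently $\overline G$ contains, as an induced subgraph, the bipartite graph associated with $T$. It then remains to verify that the bipartite graph associated with every Tucker matrix contains an induced copy of one of $\BII 1$, $\BII 2$, $\BIII 1$, $\BIII 2$, $\BIII 3$, $C_{2k}$ ($k\geq 3$); for then $G$ contains the complement of that graph, that is, one of the six graphs listed in (iii), so (iii) fails. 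The bipartite graph associated with $\MI k$ is precisely $C_{2k}$; for $\MIV$ and $\MV$ the claim is a finite check; and for $\MII k$ and $\MIII k$ one reduces each sufficiently large index to a smaller one by locating inside the associated bipartite graph an induced copy of one of the listed graphs or of a shorter even cycle.

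The step I expect to be the main obstacle is precisely this last verification inside (iii)$\Rightarrow$(i): treating the two infinite families $\MII k$ and $\MIII k$ uniformly, that is, pinning down which rows and columns of each of these Tucker matrices realize an induced $\BII i$, $\BIII i$, or $C_{2j}$ in the associated bipartite graph, together with the companion finite verifications in (ii)$\Rightarrow$(iii) that the complements of the six obstructions are not concave-round. Everything else amounts to routine book-keeping with configurations and with the block form of $M(G)$.
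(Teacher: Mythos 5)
This theorem is imported from Tucker's 1974 paper: the present paper states it with the citation \cite{MR0379298} and gives no proof, so there is no in-paper argument to compare against; I can only assess your outline on its own terms. The reduction to the block matrix $M(G)=\bigl(\begin{smallmatrix}J&B\\B\trans&J\end{smallmatrix}\bigr)$, the implication (i)$\Rightarrow$(ii) via the concatenated circular order, and the implication (ii)$\Rightarrow$(iii) are sound. In particular your treatment of $\overline{C_{2k}}$ does work: complementing all rows of $X$ together with all rows $y_j$ with $j\neq i-1,i$ zeroes out column $x_i$, so this is a legitimate Tucker reduction whose $X\times Y$ block is the biadjacency matrix of $C_{2k}$, i.e.\ $\MI{k}$ up to permutations, and Theorems~\ref{thm:Tucker-matrices} and~\ref{thm:cons-circ} apply. (A shortcut: the submatrix of $M(\overline{C_{2k}})$ on rows $X$ and columns $Y\cup\{x_1\}$ is $\overline{\MIast{k}}\in\ForbRow$, so Theorem~\ref{thm:circR} applies directly.)

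The genuine gap is exactly where you locate it, and it is not a peripheral verification but the substance of the theorem. In (iii)$\Rightarrow$(i) you reduce everything to the claim that the bipartite graph associated with \emph{every} Tucker matrix contains an induced copy of one of $\BII{1}$, $\BII{2}$, $\BIII{1}$, $\BIII{2}$, $\BIII{3}$, or $C_{2j}$ ($j\geq 3$); only the case $\MI{k}$ (whose bipartite graph is precisely $C_{2k}$) is actually established, while the infinite families $\MII{k}$ and $\MIII{k}$ are deferred to an unspecified induction ``reducing each sufficiently large index to a smaller one.'' That claim is true, but it is precisely the completeness half of Tucker's forbidden-subgraph characterization of biconvex bipartite graphs: one must exhibit, inside the bipartite graph of $\MII{k}$ (resp.\ $\MIII{k}$) for every $k$, an induced member of the finite list or a shorter induced even cycle, and this requires a concrete case analysis on the rows and columns of those matrices (note that the one-sided obstruction families $\mathit{II}_j$ and $\mathit{III}_j$ do \emph{not} all collapse to smaller members --- only the doubly convex list is finite apart from the cycles, which is why the verification is delicate). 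Until that analysis, together with the finite checks for $\MIV$ and $\MV$ and for the non-concave-roundness of the five sporadic complements in (ii)$\Rightarrow$(iii), is actually carried out, the equivalence of (iii) with (i) is asserted rather than proved; as written the argument establishes only (i)$\Rightarrow$(ii)$\Rightarrow$(iii).
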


The following result reveals an important structural property for concave-round graphs that are not co-bipartite.

\begin{thm}[\cite{MR0309810}]\label{thm:cr-cob->pca} Every concave-round graph which is not co-bipartite is a proper circular-arc graph.\end{thm}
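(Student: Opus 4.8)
The statement to prove is Theorem~\ref{thm:cr-cob->pca}: every concave-round graph which is not co-bipartite is a proper circular-arc graph. Since this is attributed to Tucker~\cite{MR0309810}, the plan is to recover the argument from the circular-ones structure that a concave-round graph carries on its augmented adjacency matrix.

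The plan is as follows. Let $G$ be concave-round, so its augmented adjacency matrix $A$ has the circular-ones property; fix a circular column ordering $v_1,v_2,\ldots,v_n$ witnessing this, so that for each $i$ the set $N_G[v_i]$ is a circular interval $\{v_{i-\ell_i},\ldots,v_{i+r_i}\}$ (indices mod $n$), with $v_i$ itself always in this interval because of the augmented diagonal. First I would observe that, to build a circular-arc model, one wants to replace each vertex $v_i$ by an arc $\alpha_i$ on a circle (with $n$ ``slots'' $p_1,\ldots,p_n$ around the circle) so that $\alpha_i$ covers exactly the slots corresponding to $N_G[v_i]$; two arcs then intersect iff the corresponding closed neighbourhoods meet a common slot, and the key point to check is that $N_G[v_i]\cap N_G[v_j]\neq\emptyset$ is equivalent to $v_iv_j\in E(G)$ for $i\neq j$ — one direction is immediate (if $v_iv_j\in E$ then $v_j\in N_G[v_i]\cap N_G[v_j]$), and the other direction, plus the properness, is where the hypothesis that $G$ is not co-bipartite must enter. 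So the real content is: (a) this slot construction yields a circular-arc model at all, and (b) one can perturb the endpoints so that no arc properly contains another.

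The key steps, in order, would be: (1) From the circular-ones ordering, set up the $n$ slots and the candidate arcs $\alpha_i=$ the minimal closed arc covering the slots of $N_G[v_i]$, with endpoints placed strictly between consecutive slots so that containment of slot-sets corresponds to containment of arcs. (2) Show the arcs cannot cover the whole circle: here I would argue that if the arcs of $G$ (or of some component) covered the circle, then, tracing around, one extracts either a dominating structure or — when $G$ fails to be a proper circular-arc graph — a co-bipartite configuration, contradicting ``not co-bipartite.'' Concretely, Tucker's dichotomy is that a concave-round graph is either co-bipartite or the ``rounding'' is achieved with arcs none of which is a superset of another; I would invoke that a concave-round graph whose augmented adjacency matrix actually has the \emph{consecutive}-ones property (equivalently admits a linear, not merely circular, interval ordering on some Tucker reduction) is a proper interval graph, and more generally reduce to analysing when the circular intervals $N_G[v_i]$ admit a proper representation. (3) Show $N_G[v_i]\subseteq N_G[v_j]$ with $i\neq j$ forces $v_i$ and $v_j$ to be ``twins'' contributing to a co-bipartite piece: if $N_G[v_i]\subseteq N_G[v_j]$ then $v_i\sim v_j$ and every neighbour of $v_i$ is a neighbour of $v_j$; iterating this comparability among the circular intervals, a maximal chain of nested closed neighbourhoods around the circle yields two cliques covering $V(G)$ unless it is trivial, i.e.\ $G$ co-bipartite. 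Hence for non-co-bipartite $G$ the slot-arcs are already pairwise incomparable, giving a proper circular-arc model after the endpoint perturbation of step~(1); combined with step~(2) the arcs live on a genuine circle, so $G$ is a proper circular-arc graph.

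The main obstacle I expect is step~(2)–(3): precisely pinning down why failure of the proper/circular-arc conclusion must manifest as the graph being co-bipartite. The clean way is probably not to argue from scratch but to use the structure already exposed in the paper: a concave-round $G$ whose augmented adjacency matrix has the circular-ones property either has a Tucker reduction with the consecutive-ones property — in which case $G$ is a proper interval graph, hence certainly a proper circular-arc graph — or it does not, and in the latter case one shows the ``wrap-around'' of the circular ordering is forced, and any two vertices whose closed neighbourhoods are comparable generate, together with the wrap-around, an induced co-bipartite subgraph spanning $G$; then non-co-bipartiteness rules this out. I would also double-check the degenerate small cases (e.g.\ complete graphs, which are co-bipartite, and short cycles) separately to make sure the slot construction does not collapse arcs to points. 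The bookkeeping of endpoint placement in step~(1) is routine and I would not grind through it; the conceptual crux is the co-bipartite dichotomy in steps~(2)–(3).
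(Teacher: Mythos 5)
This statement is imported by the paper from Tucker's work \cite{MR0309810}; the paper gives no proof of it, so your proposal has to stand on its own, and as written it does not. The central construction in your step~(1) is broken: if each vertex $v_i$ is assigned the arc spanning the slots of $N_G[v_i]$, then two arcs intersect whenever the two closed neighbourhoods share a slot, and you yourself identify that the proof needs ``$N_G[v_i]\cap N_G[v_j]\neq\emptyset$ iff $v_iv_j\in E(G)$.'' That equivalence is simply false for non-co-bipartite concave-round graphs, so the missing direction cannot be supplied by the hypothesis. Take $G=C_5$ with its natural circular ordering: it is concave-round and not co-bipartite, yet $N_G[v_1]\cap N_G[v_3]=\{v_2\}$ while $v_1$ and $v_3$ are nonadjacent. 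Your arcs therefore model (roughly) the square of $G$, not $G$. Tucker's construction necessarily does something different -- it does not use the full closed-neighbourhood interval as the arc -- and this is precisely the part of the argument your sketch treats as routine bookkeeping.

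The dichotomy you lean on in step~(3) is also wrong as stated: comparability of closed neighbourhoods does not force co-bipartiteness. For example, $C_6$ with one vertex duplicated as a true twin is concave-round (the twin sits next to its original in the circular order), is not co-bipartite (since $C_6$ already contains the triangle $\{v_1,v_3,v_5\}$ in its complement), and has two equal, hence comparable, closed neighbourhoods. So ``nested neighbourhoods $\Rightarrow$ two cliques cover $V(G)$'' fails, and with it the claim that non-co-bipartiteness makes the slot-arcs pairwise incomparable. Beyond these two concrete errors, the steps where the actual content of the theorem lives -- why failure of properness must produce a co-bipartite structure, and how to reduce the circular case to the consecutive/proper-interval case -- are stated as intentions (``I would argue,'' ``I would invoke'') rather than carried out. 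The proposal is therefore not a proof; it needs a correct arc construction from the circular-ones ordering (one that realizes adjacency, not neighbourhood intersection) and a genuine argument for the co-bipartite alternative.
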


Actually, the connection between proper circular-arc graphs and concave-round graphs is even stronger, as the following result shows.

\begin{thm}[\cite{MR0309810}]\label{thm:pca->cr} Every proper circular-arc graph is concave-round.\end{thm}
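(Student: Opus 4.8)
Every proper circular-arc graph is concave-round.

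The plan is to argue directly from the definitions, using a proper circular-arc model of the given graph $G$ and showing that the natural circular enumeration of the vertices coming from the endpoints of the arcs witnesses that $G$ is concave-round. First I would fix a proper circular-arc model $\{A_v : v \in V(G)\}$ of $G$, where no arc properly contains another. By a standard perturbation argument I may assume the model is in general position: all $2n$ endpoints of the arcs are distinct points on the circle. For each vertex $v$, write $A_v = (s_v, t_v)$ reading the arc clockwise, so $s_v$ is the counterclockwise (``left'') endpoint and $t_v$ the clockwise (``right'') endpoint. I would then enumerate the vertices $v_1, v_2, \ldots, v_n$ in the clockwise cyclic order of their left endpoints $s_{v_1}, s_{v_2}, \ldots, s_{v_n}$ around the circle.

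The key step is to prove that, with this enumeration, the closed neighbourhood $N_G[v_i]$ is a cyclic interval $\{v_{i-\ell}, \ldots, v_{i+r}\}$ of the enumeration for suitable $\ell, r \geq 0$. The properness of the model is exactly what makes this work: since no arc properly contains another, whenever two arcs $A_u$ and $A_w$ both intersect $A_{v_i}$, their left endpoints cannot ``straddle'' the arc $A_{v_i}$ in a way that would break the interval property. Concretely, I would show that the set of left endpoints $s_u$ with $u \in N_G[v_i]$ forms a contiguous clockwise arc of left endpoints: on one side of $s_{v_i}$ it consists of the vertices $u$ whose arc $A_u$ reaches clockwise far enough to touch $s_{v_i}$ (these are forced to have $s_u$ in a bounded counterclockwise window, because properness prevents $A_u$ from also extending clockwise past $t_{v_i}$), and on the other side it consists of the vertices $u$ with $s_u$ lying inside $A_{v_i}$ (again an interval of left endpoints, since an arc with left endpoint inside $A_{v_i}$ automatically intersects it). Checking that these two contiguous blocks of left endpoints abut, and that no vertex outside them lies in $N_G[v_i]$, is the routine-but-central verification; here one repeatedly invokes that $A_u \not\supsetneq A_{v_i}$ and $A_{v_i} \not\supsetneq A_u$.

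An alternative, shorter route I would consider is to quote the literature more heavily: proper circular-arc graphs are precisely the graphs admitting a \emph{round enumeration of the closed neighbourhoods in one direction}, which is essentially the definition of concave-round once one observes that ``round'' models can be taken with the left-endpoint order; this is the content of Tucker's work \cite{MR0309810,MR0379298}. In the write-up I would likely present the direct argument for self-containedness but keep it brief, since the statement is classical.

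I expect the main obstacle to be the bookkeeping in the key step: carefully handling the modular/cyclic indexing and the degenerate cases (e.g.\ an arc covering almost the whole circle, or $G$ disconnected so that several arcs are pairwise disjoint), and making precise the claim that properness forbids the ``straddling'' configuration that would otherwise split $N_G[v_i]$ into two cyclic pieces. None of this is deep, but it is the place where a proof can go wrong if the case analysis on endpoint positions is not organized cleanly.
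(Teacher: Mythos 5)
The paper does not prove this statement at all: Theorem~\ref{thm:pca->cr} is quoted from Tucker~\cite{MR0309810} with no argument given. Your direct proof from a proper model is the standard one (and is essentially Tucker's): order the vertices by the cyclic order of the counterclockwise endpoints $s_v$, and observe that $N_G[v_i]$ splits as $\{u : s_u \in A_{v_i}\}$ (a block starting at $v_i$ and running clockwise to $t_{v_i}$) together with $\{u : s_{v_i} \in A_u\}$ (a block ending at $v_i$), using the fact that two arcs intersect if and only if one contains the other's counterclockwise endpoint. This is a sound and self-contained route, and your attention to general position and degenerate cases is appropriate.

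One correction to the way you invoke properness, which is the only place your sketch could mislead you in the write-up. You say the contiguity of the ``backward'' block is enforced because $A_u \not\supsetneq A_{v_i}$ and $A_{v_i} \not\supsetneq A_u$, i.e.\ by comparing each neighbour's arc with $A_{v_i}$. That is not where properness is actually needed. The crucial comparison is between \emph{two} arcs other than $A_{v_i}$: if $A_u$ contains $s_{v_i}$ and $s_w$ lies strictly between $s_u$ and $s_{v_i}$ in clockwise order, then either $A_w$ also contains $s_{v_i}$ (so $w \in N_G[v_i]$ and the block has no gap at $w$), or else $A_w \subseteq [s_w, s_{v_i}) \subseteq [s_u, s_{v_i}] \subseteq A_u$, so $A_w$ is properly contained in $A_u$ --- contradicting properness. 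Comparisons with $A_{v_i}$ alone do not rule out the straddling configuration; it is the pairwise incomparability of the neighbours' arcs among themselves that does. With that fix the argument goes through exactly as you outline.
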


Moreover, Tucker also characterized proper circular-arc graphs by minimal forbidden induced subgraphs. 

\begin{thm}[\cite{MR0379298}]\label{thm:pca} A graph is a proper circular-arc graph if and only if it contains none of the following as an induced subgraph: net, tent$^\ast$, $\overline{H_2}$, $\overline{H_3}$, $\overline {H_4}$, $\overline{\BIII{1}}$, $C_k^\ast$ for each $k\geq 4$, $\overline{C_{2k}}$ for each $k\geq 3$, and $\overline{C_{2k+1}^\ast}$ for each $k\geq 1$.\end{thm}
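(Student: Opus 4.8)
The plan is to reduce everything to the concave-round machinery already available, using Theorems~\ref{thm:cr-cob->pca}, \ref{thm:pca->cr}, and~\ref{thm:concave-cobip}. The first ingredient is that proper circular-arc graphs form a hereditary class: restricting a proper circular-arc model to the arcs indexed by an induced subgraph is again a proper circular-arc model. Hence for the ``only if'' direction it suffices to show that none of the listed graphs is a proper circular-arc graph. If $H$ is one of the listed graphs and $H$ is \emph{not} co-bipartite, I would show directly that $H$ is not concave-round by exhibiting a configuration from $\ForbRow$ inside the augmented adjacency matrix of $H$; by Theorem~\ref{thm:circR} this means the augmented adjacency matrix fails the circular-ones property, so $H$ is not concave-round and therefore, by Theorem~\ref{thm:pca->cr}, not a proper circular-arc graph. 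If instead $H$ is co-bipartite --- this covers all the $\overline{C_{2k}}$ with $k\geq3$, the graph $\overline{\BIII 1}$ when it is co-bipartite, and possibly some of the $\overline{H_i}$ --- I would appeal to Theorem~\ref{thm:concave-cobip}: in each such case $H$ contains one of the subgraphs forbidden there, so $H$ is not concave-round and again not a proper circular-arc graph; the few co-bipartite members, if any, that happen to be concave-round would be dispatched by directly checking that they have no proper circular-arc model.

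For the ``if'' direction, assume $G$ has no induced subgraph from the list and split on whether $G$ is co-bipartite. Suppose first that $G$ is not co-bipartite. Then it is enough to prove that $G$ is concave-round, since Theorem~\ref{thm:cr-cob->pca} will then give that $G$ is a proper circular-arc graph. Concave-roundness of $G$ is equivalent to the augmented adjacency matrix $A$ of $G$ having the circular-ones property; if it did not, then by Theorem~\ref{thm:circR} the matrix $A$ would contain some $N\in\ForbRow$ as a configuration, i.e.\ $N=a\miop\MIast k$ for some $k$ and some bracelet $a$, or $N\in\{\MIV,\overline{\MIV},\MVast,\overline{\MVast}\}$. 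The heart of the proof is then a case analysis on $N$: using that $A$ is symmetric with an all-ones main diagonal, one recovers from the rows and columns of $A$ indexing $N$ a bounded set of vertices of $G$ whose induced subgraph is isomorphic to one of net, tent$^\ast$, $\overline{H_2}$, $\overline{H_3}$, $\overline{H_4}$, $\overline{\BIII 1}$, or a member of one of the families $C_k^\ast$ ($k\geq4$), $\overline{C_{2k}}$ ($k\geq3$), $\overline{C_{2k+1}^\ast}$ ($k\geq1$) --- contradicting the hypothesis. Now suppose $G$ is co-bipartite. Since $G$ avoids $\overline{\BIII 1}$ and $\overline{C_{2k}}$ for all $k\geq3$ (both on the list), and since each of $\overline{\BII 1}$, $\overline{\BII 2}$, $\overline{\BIII 2}$, $\overline{\BIII 3}$ is either on the list or contains, as an induced subgraph, some graph that is, Theorem~\ref{thm:concave-cobip} yields that $G$ is concave-round, hence co-biconvex. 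A final short argument, using once more the absence from $G$ of the relevant $\overline{H_i}$, upgrades a biconvex (doubly convex) vertex ordering of $\overline G$ into a proper circular-arc model of $G$.

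The step I expect to be the main obstacle is the translation, in the non-co-bipartite case, of a forbidden submatrix of the augmented adjacency matrix into a forbidden induced subgraph of $G$. A submatrix of an augmented adjacency matrix is not itself an augmented adjacency matrix, so one cannot simply read off an induced subgraph; instead one has to argue about which rows and columns are selected, exploit the symmetry and the all-ones diagonal of $A$ to pin down the ``missing'' entries (often enlarging the selected vertex set first), and only then identify a clean member of the list. Handling the family $\{a\miop\MIast k\}$ uniformly in $k$ --- with the bracelet $a$ recording which rows of $\MIast k$ have been complemented --- is the delicate part; here I would lean on Lemmas~\ref{lem:a=a'-mod-rot&perm}--\ref{lem:MIastgeq4} to normalise $a$ up to shifts and reversals and cut the problem down to a few shapes, paralleling the bookkeeping already carried out for the circular-ones property in Subsection~\ref{ssec:circR}. (One may of course also simply invoke Tucker's original structural proof in~\cite{MR0379298}.)
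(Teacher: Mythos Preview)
This theorem is not proved in the paper; it is quoted from Tucker~\cite{MR0379298} and used as a black box (most notably in the proof of Theorem~\ref{thm:main}). So there is no ``paper's own proof'' to compare against. What you are proposing is, in effect, to reverse the paper's logical order: derive Tucker's characterisation of proper circular-arc graphs from the concave-round machinery, rather than the other way round. That is not circular in principle, since Theorems~\ref{thm:concave-cobip}, \ref{thm:cr-cob->pca}, \ref{thm:pca->cr}, and~\ref{thm:circR} are established independently of Theorem~\ref{thm:pca}. But several of your steps are genuine gaps, not just bookkeeping.

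First, in the ``only if'' direction: $\overline{H_2}$ and $\overline{H_4}$ \emph{are} concave-round --- this is exactly why they appear in the list of Theorem~\ref{thm:pca} but not in that of Theorem~\ref{thm:main} --- so your strategy of showing each listed graph is not concave-round cannot treat them. Your hedge (``directly checking that they have no proper circular-arc model'') is of course possible, but it is precisely the kind of ad hoc structural argument that Tucker supplies and the concave-round machinery does not.

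Second, the co-bipartite ``if'' case ends with the claim that a biconvex ordering of $\overline G$ can be ``upgraded'' to a proper circular-arc model of $G$ once the relevant $\overline{H_i}$ are excluded. This is the substantive part of Tucker's analysis for co-bipartite graphs, and nothing in the paper's toolkit gives it to you; you would essentially be redoing Tucker here. You also need the fact that each of $\overline{\BII_1}$, $\overline{\BII_2}$, $\overline{\BIII_2}$, $\overline{\BIII_3}$ contains an induced $\overline{H_2}$ or $\overline{H_4}$, which is true but must be checked.

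Third, the non-co-bipartite ``if'' case --- turning a configuration $N\in\ForbRow$ inside the augmented adjacency matrix into an induced subgraph from the list --- is, as you yourself say, the crux. The paper does a fragment of this in Lemma~\ref{lem:co--C_2k} and Theorem~\ref{thm:forb-or-cycle}, but only for $N=\overline{\MIast{k'}}$, and even there it sometimes only produces an odd cycle in $\overline G$ and then falls back on Theorem~\ref{thm:pca} via Theorem~\ref{thm:algo-pca} and Lemma~\ref{lem:main}. Carrying this out over all of $\ForbRow$ without invoking Theorem~\ref{thm:pca} would amount to a new, matrix-based proof of Theorem~\ref{thm:main}; it is a real project, not a routine case check.
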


Recently, a linear-time algorithm for finding one of the forbidden induced subgraphs in the above theorem, in any graph that is not a proper circular-arc graph, was devised in~\cite{MR3318808}.

\begin{thm}[\cite{MR3318808}]\label{thm:algo-pca} There is an algorithm that, given any graph $G$  which is not a proper circular-arc graph, finds in linear time a minimal forbidden induced subgraph for the class of proper circular-arc graphs contained in $G$ as an induced subgraph.\end{thm}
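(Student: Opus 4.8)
The plan is to reduce the statement to a \emph{certifying} linear-time recognition of proper circular-arc graphs: run the recognition, and when it establishes that $G$ has no proper circular-arc model, read a minimal forbidden induced subgraph off the structure at the point where that failure first becomes manifest. By Theorem~\ref{thm:pca} the only possible targets are a finite list of small graphs (the \emph{net}, $\mathrm{tent}^{\ast}$, $\overline{H_2}$, $\overline{H_3}$, $\overline{H_4}$, $\overline{\BIII 1}$, and the claw $\overline{C_3^\ast}$) together with three one-parameter families obtained from cycles by complementation and the addition of at most one extra vertex: the $C_k^\ast$ $(k\geq 4)$, the even anti-holes $\overline{C_{2k}}$ $(k\geq 3)$, and the $\overline{C_{2k+1}^\ast}$ $(k\geq 1)$. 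Accordingly the extraction step must either print a fixed small graph or produce, and trim to a minimal one, a member of one of the three families.

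The recognition I would organize around a would-be \emph{round enumeration} $v_1,v_2,\dots,v_n$ of $V(G)$ --- the circular analogue of the straight enumeration that characterizes proper interval graphs --- built by a single LexBFS-type sweep that appends vertices while keeping every already-fixed closed neighbourhood a circular interval of the current prefix. This can be arranged to run in $O(n+m)$ time, and by the round-enumeration characterization of proper circular-arc graphs it succeeds if and only if $G$ is a proper circular-arc graph. When the sweep jams, the cause is a \emph{local} conflict: a bounded set of frontier vertices whose mutual adjacencies, together with a bounded amount of attached path, cycle, or ``cone'' material traced along the current BFS layering, already make a round interval structure impossible. Reading those bounded adjacencies off yields the \emph{core} of an obstruction.

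From the core I would finish in one of two ways. If the core, after a bounded completion and clean-up, is one of the finitely many small graphs of Theorem~\ref{thm:pca}, I return it in $O(1)$ additional time. Otherwise the core is a short \emph{seed} --- an induced $C_4$ or $\overline{C_4}$ or a short path-like pattern --- that forces one of the three parametric families; I then \emph{grow} the seed into a genuine induced (coned) hole or (coned) anti-hole by following BFS layers of $G$, respectively of $\overline G$, between the seed's endpoints, and \emph{trim} the result to a minimal forbidden subgraph by bounded local shortcuts, arranging that each vertex and each edge is touched $O(1)$ times so that the whole procedure remains within $O(n+m)$. In the anti-hole sub-case there is a shortcut that reuses Section~\ref{sec:circ1p}: the local conflict forces $G$ to contain an induced $\overline{C_{2k}}$, so the augmented adjacency matrix of $G$ lacks the circular-ones property, Corollary~\ref{cor:circR} produces a forbidden submatrix of it in linear time, and --- as in the proof of Theorem~\ref{thm:algo-circRC}, now read back on the vertex set indexed by the submatrix's rows and columns --- this delivers an even anti-hole $\overline{C_{2k}}$ or one of the co-bipartite sporadic obstructions directly.

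The hard part will be exactly this last phase: converting the local ``the sweep cannot continue'' certificate into a \emph{minimal} forbidden induced subgraph without exceeding linear time. Two issues stand out. First, two of the three families live in the dense complement $\overline G$, so the hole-growing and the trimming must navigate $\overline G$ \emph{implicitly} --- through the non-neighbourhood and co-component data that the recognition already maintains --- rather than ever materializing $\overline G$. Second, minimality (no chord, smallest possible length, the extra vertex present and not redundant) cannot be verified by a global recheck; each reduction step --- deleting the endpoint of a chord, contracting a detour, discarding a superfluous vertex --- has to be \emph{forced} by the local structure and its cost charged against that structure. Deciding, at the jamming point, which of $C_k^\ast$, $\overline{C_{2k}}$, $\overline{C_{2k+1}^\ast}$ is in play, and separately disposing of $\overline{H_2},\overline{H_3},\overline{H_4}$, is the most delicate part of the case analysis; it is in the same spirit as the bracelet- and pattern-bookkeeping carried out for the circular-ones property in Subsections~\ref{ssec:circR} and~\ref{ssec:circRC}.
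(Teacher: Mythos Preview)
This theorem is not proved in the paper; it is quoted from \cite{MR3318808} and used as a black box in the proof of Corollary~\ref{cor:algo-concave-round}. So there is no ``paper's own proof'' to compare against. What you have written is a high-level outline of how one might design such a certifying algorithm from scratch, not a proof, and it is quite far from the approach actually taken in \cite{MR3318808}, which builds on a fully dynamic data structure for proper circular-arc models rather than on a single LexBFS-type sweep.

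As a sketch, your plan is plausible in spirit but contains substantial gaps. The heart of the matter---turning ``the sweep jams'' into a bounded local certificate that can then be grown and trimmed into one specific member of the Tucker list, all within $O(n+m)$---is precisely the content of the theorem, and you have described it only at the level of intentions (``a bounded set of frontier vertices'', ``bounded local shortcuts'', ``each vertex and each edge is touched $O(1)$ times''). The anti-hole shortcut you suggest via Corollary~\ref{cor:circR} is not quite right either: if $G$ is not a proper circular-arc graph it need not contain any $\overline{C_{2k}}$, so the augmented adjacency matrix may well have the circular-ones property (indeed, $G$ could be concave-round, e.g.\ $\overline{H_2}$ or $\overline{H_4}$), and then the machinery of Section~\ref{sec:circ1p} returns nothing. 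In short, your proposal is a reasonable statement of what a certifying recognizer must accomplish, but it does not supply the case analysis or the linear-time bookkeeping that constitute the actual work, and for the purposes of this paper the result should simply be cited.
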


The theorem below is our main result and answers the problem posed in~\cite{MR1760336} mentioned in the introduction.

\begin{thm}\label{thm:main} A graph is concave-round if and only if it contains none of the following as an induced subgraph: net, tent$^\ast$, $\overline{H_3}$, $\overline{\BII{1}}$, $\overline{\BII{2}}$, $\overline{\BIII{1}}$, $\overline{\BIII{2}}$, and $\overline{\BIII{3}}$, $C_k^\ast$ for each $k\geq 4$, $\overline{C_{2k}}$ for each $k\geq 3$, and $\overline{C_{2k+1}^\ast}$ for each $k\geq 1$.\end{thm}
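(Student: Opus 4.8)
The plan is to reduce the forbidden induced subgraph characterization of concave-round graphs to the two cases that the structural theorems already carve out: the co-bipartite case and the non-co-bipartite case. First I would prove the ``only if'' direction by verifying that none of the listed graphs is concave-round; for the finite sporadic graphs this is a direct check (or follows from Theorems~\ref{thm:concave-cobip} and~\ref{thm:pca} together with the observation that concave-round graphs are a subclass of circular-arc graphs), and for the infinite families $C_k^\ast$, $\overline{C_{2k}}$, and $\overline{C_{2k+1}^\ast}$ it follows from Theorem~\ref{thm:concave-cobip}\eqref{it:cobip3} in the co-bipartite cases and from Theorem~\ref{thm:pca} (via Theorem~\ref{thm:cr-cob->pca}) otherwise. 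The key point here is that $\overline{C_{2k}}$ is co-bipartite while $C_k^\ast$ and $\overline{C_{2k+1}^\ast}$ are not, so each family is handled by exactly one of the two structural theorems.

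For the ``if'' direction, suppose $G$ contains none of the listed graphs as an induced subgraph; I must show $G$ is concave-round. The plan is to split on whether $G$ is co-bipartite. If $G$ is co-bipartite, I would invoke Theorem~\ref{thm:concave-cobip}: it suffices to exhibit an induced $\overline{\BII 1}$, $\overline{\BII 2}$, $\overline{\BIII 1}$, $\overline{\BIII 2}$, $\overline{\BIII 3}$, or $\overline{C_{2k}}$ whenever $G$ is not concave-round. All of these except $\overline{\BII 1}$ are already on our forbidden list, so the only gap to close is: a co-bipartite graph containing an induced $\overline{\BII 1}$ but none of the graphs on our list must still contain one of the listed obstructions. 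I expect this to be handled by a finite case analysis — $\overline{\BII 1}$ is a small graph, so one examines the (finitely many) ways it can sit inside a co-bipartite graph and shows that avoiding all the listed obstructions forces a contradiction, presumably by producing $\overline{H_3}$ or one of the $\overline{\BIII i}$ or $\overline{\BII 2}$. This replacement of $\overline{\BII 1}$ by the smaller/cleaner obstruction $\overline{H_3}$ (note $\overline{H_3}$ appears in Theorem~\ref{thm:main} but $\overline{H_2}$ and $\overline{H_4}$ from Theorem~\ref{thm:pca} do not) is the delicate bookkeeping step.

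If $G$ is not co-bipartite, then by Theorem~\ref{thm:cr-cob->pca} every concave-round supergraph structure is governed by proper circular-arc graphs, and by Theorem~\ref{thm:pca->cr} being a proper circular-arc graph is equivalent to being concave-round in this case; hence $G$ is concave-round if and only if $G$ is a proper circular-arc graph. So I would apply Theorem~\ref{thm:pca}: if $G$ is not concave-round it contains one of net, tent$^\ast$, $\overline{H_2}$, $\overline{H_3}$, $\overline{H_4}$, $\overline{\BIII 1}$, $C_k^\ast$, $\overline{C_{2k}}$, or $\overline{C_{2k+1}^\ast}$. The families and net, tent$^\ast$, $\overline{H_3}$, $\overline{\BIII 1}$ are already on our list; the remaining cases are an induced $\overline{H_2}$ or an induced $\overline{H_4}$. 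For these I would argue that, in a graph that is \emph{not} co-bipartite and that avoids all the other listed obstructions, an induced $\overline{H_2}$ (resp.\ $\overline{H_4}$) cannot occur in isolation: using the extra vertices available (because $G$ is not co-bipartite one has an induced $\overline{C_{2k+1}}$ somewhere, or at least a triangle's worth of independent vertices) one extends the copy to produce one of net, tent$^\ast$, $\overline{H_3}$, or a member of one of the three infinite families.

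The main obstacle, as I see it, is precisely this reconciliation step: showing that the obstructions $\overline{\BII 1}$ (in the co-bipartite case) and $\overline{H_2}, \overline{H_4}$ (in the non-co-bipartite case), which appear in Tucker's theorems but are absent from Theorem~\ref{thm:main}, are redundant given the rest of the list. One expects this to be a somewhat lengthy but elementary case analysis on small graphs, perhaps organized by how a fixed small obstruction embeds and which additional vertices/non-edges are forced. A secondary subtlety is making sure the two cases (co-bipartite vs.\ not) together cover all non-concave-round graphs with no overlap-induced inconsistency in the output list — in particular that $\overline{C_{2k}}$ is correctly attributed to the co-bipartite side and the starred families to the other side — which is immediate from parity and the definition of $G^\ast$ but should be stated explicitly.
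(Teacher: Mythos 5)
Your overall architecture is the same as the paper's: handle the co-bipartite case with Theorem~\ref{thm:concave-cobip}, reduce the non-co-bipartite case to proper circular-arc graphs via Theorems~\ref{thm:cr-cob->pca} and~\ref{thm:pca->cr}, and then reconcile Tucker's list from Theorem~\ref{thm:pca} with the list in Theorem~\ref{thm:main}. Two remarks before the main point. First, you have misread the statement: $\overline{\BII{1}}$ \emph{is} on the list of Theorem~\ref{thm:main}, so the entire set $\{\overline{\BII{1}},\overline{\BII{2}},\overline{\BIII{1}},\overline{\BIII{2}},\overline{\BIII{3}}\}\cup\{\overline{C_{2k}}:k\geq 3\}$ from Theorem~\ref{thm:concave-cobip}\eqref{it:cobip3} is contained in the list; the ``delicate bookkeeping step'' you describe for the co-bipartite case does not exist, and that case is immediate. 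Second, in the non-co-bipartite case your target list for the extension argument (``net, tent$^\ast$, $\overline{H_3}$, or a member of one of the three infinite families'') omits $\overline{\BIII{1}}$, which the actual argument does need as one of its possible outputs.

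The genuine gap is the step you describe in one sentence as ``using the extra vertices available \ldots one extends the copy to produce one of \ldots''. This is not an argument; it is the entire technical content of the theorem. What must be proved is: if $G$ contains an induced $\overline{H_2}$ or $\overline{H_4}$ and $\overline G$ contains a chordless odd cycle $C$, then $G$ contains an induced $C_4^*$, $\overline{C_6}$, $\overline{H_3}$, $\overline{\BIII{1}}$, or $\overline{C_{2k+1}^*}$. This is Lemma~\ref{lem:main} of the paper, and its proof is a multi-page analysis: one works in $\overline G$, shows that either some vertex of the $H_2$/$H_4$ copy sees no vertex of $C$ (yielding $C_{2k+1}^*$ in the complement), or else one can extract, by an alternating two-coloring walk along $C$, a chordless subpath $P$ of $C$ whose endpoints attach to the two sides of the copy in a controlled way; one then runs through $18$ cases according to the parity of $|V(P)|$ and which vertices of the copy the endpoints of $P$ hit, in each case either exhibiting an odd cycle avoiding some closed neighbourhood (hence a $C_{2k+1}^*$) or exhibiting one of $\overline{C_4^*}$, $C_6$, $H_3$, $\BIII{1}$ directly. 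Nothing in your sketch indicates how the odd cycle interacts with the copy, why the interaction can be reduced to a single chordless path, or why the case analysis closes; ``perhaps organized by how a fixed small obstruction embeds'' concedes that the argument has not been found. Until this lemma is proved, the proposal establishes only the easy direction and the easy half of the hard direction.
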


We will also prove that one of the minimal forbidden induced subgraphs for the class of concave-round graphs can be found in linear time in any graph that is not concave-round.

\begin{cor}\label{cor:algo-concave-round} There is an algorithm that, given a graph $G$ which is not concave-round, finds in linear time a minimal forbidden induced subgraph for the class of concave-round graphs contained in $G$ as an induced subgraph.\end{cor}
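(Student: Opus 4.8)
The plan is to assemble the algorithm from three ingredients that are already available in linear time: detection of a minimal forbidden induced subgraph for proper circular-arc graphs (Theorem~\ref{thm:algo-pca}), detection of a forbidden submatrix for the circular-ones property for rows and columns (Theorem~\ref{thm:algo-circRC}), and the basic equivalence that $G$ is concave-round if and only if its augmented adjacency matrix $A^\ast$ has the circular-ones property. The structural backbone is that, by Theorems~\ref{thm:cr-cob->pca}, \ref{thm:pca->cr} and~\ref{thm:concave-cobip}, a graph is concave-round exactly when it is a proper circular-arc graph or a co-bipartite co-biconvex graph. Comparing the forbidden list of Theorem~\ref{thm:main} with that of Theorem~\ref{thm:pca} shows that the only minimal forbidden induced subgraphs for proper circular-arc graphs absent from the concave-round list are $\overline{H_2}$ and $\overline{H_4}$, both co-bipartite, whereas the extra concave-round obstructions $\overline{\BII{1}}$, $\overline{\BII{2}}$, $\overline{\BIII{2}}$, $\overline{\BIII{3}}$ are all among the graphs appearing in the characterization of Theorem~\ref{thm:concave-cobip}. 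This is what makes the two-pronged approach below work.

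Given $G$ not concave-round, it is not a proper circular-arc graph by Theorem~\ref{thm:pca->cr}, so applying the algorithm of Theorem~\ref{thm:algo-pca} yields, in linear time, a minimal forbidden induced subgraph $H$ for the class of proper circular-arc graphs. If $H$ is one of net, tent$^\ast$, $\overline{H_3}$, $\overline{\BIII{1}}$, $C_k^\ast$, $\overline{C_{2k}}$ or $\overline{C_{2k+1}^\ast}$, then $H$ belongs to the list of Theorem~\ref{thm:main} and we output $H$. Otherwise $H\in\{\overline{H_2},\overline{H_4}\}$, and we fall back on the matrix machinery: we form $A^\ast$, which lacks the circular-ones property for rows and hence also for rows and columns, and apply Theorem~\ref{thm:algo-circRC} to obtain maps $\rho,\sigma$ with $A^\ast_{\rho,\sigma}\in\ForbRowCol\cup\ForbRowCol\trans$. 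Since $A^\ast$ is symmetric, $A^\ast_{\sigma,\rho}=(A^\ast_{\rho,\sigma})\trans$, so after possibly exchanging $\rho$ and $\sigma$ we may assume that $N:=A^\ast_{\rho,\sigma}$ lies in $\ForbRowCol$; thus $N$ is $\MIast k$ or $\overline{\MIast k}$ for some $k\geq 3$, or $a\miop\MIast{\vert a\vert}$ for some $a\in\ARowCol$, or $\MVast$, or $\overline{\MVast}$. The remaining, and new, ingredient is a \emph{translation lemma}: because $A^\ast$ is symmetric with an all-ones diagonal, the set $W$ of vertices indexing the rows and columns of $N$ induces in $G$, after a bounded amount of local inspection around $W$ (to separate coincident row- and column-indices and to read off the few adjacencies within the row-index set and within the column-index set that $N$ alone does not record, calling Theorem~\ref{thm:algo-pca} once more on $G$ to extract a chordless certificate in the cycle cases), one of the graphs in the list of Theorem~\ref{thm:main}; we output that induced subgraph.

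Correctness is then immediate from Theorem~\ref{thm:main}: every graph the algorithm returns is an induced subgraph of $G$ and occurs in the forbidden list, and the algorithm never fails, since $G$ is not a proper circular-arc graph (so the first call succeeds) and $A^\ast$ lacks the circular-ones property (so the fallback succeeds). The running time is $O(n+m)$: the call to Theorem~\ref{thm:algo-pca} is linear, forming $A^\ast$ and invoking Theorem~\ref{thm:algo-circRC} costs $O(\size(A^\ast))=O(n+m)$, and the translation lemma requires only $O(1)$ additional work for the sporadic matrices $a\miop\MIast{\vert a\vert}$ ($a\in\ARowCol$), $\MVast$ and $\overline{\MVast}$, and time linear in $k$ for $\MIast k$ and $\overline{\MIast k}$ (to trace out the corresponding chordless or near-chordless cycle).

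I expect the translation lemma, and in particular the fallback triggered by $\overline{H_2}$ or $\overline{H_4}$, to be the main obstacle. The subtlety is that a submatrix $A^\ast_{\rho,\sigma}$ records only the entries between the row-index set and the column-index set, and therefore determines neither the adjacencies inside the row-index set nor those inside the column-index set; recovering exactly this information — so as to pin down $G[W]$ as one specific graph from Theorem~\ref{thm:main} rather than an unspecified supergraph — must exploit the symmetry of $A^\ast$, the all-ones diagonal (so that a $1$-entry means ``equal or adjacent''), and the minimality of the submatrix. Concretely, I anticipate that for $\MIast k$ and $\overline{\MIast k}$ the pattern of coincidences between row- and column-indices forces $G[W]$ to be a graph of type $C_k^\ast$, $\overline{C_{2k}}$, or $\overline{C_{2k+1}^\ast}$ (whence a clean chordless copy can be recovered via Theorem~\ref{thm:algo-pca}); that the sporadic matrices yield, by a finite case check, the sporadic graphs $\overline{\BII{1}}$, $\overline{\BII{2}}$, $\overline{\BIII{1}}$, $\overline{\BIII{2}}$, $\overline{\BIII{3}}$, net, tent$^\ast$, $\overline{H_3}$; and that, when $W$ induces a co-bipartite graph (the situation one is led to precisely by $\overline{H_2}$ or $\overline{H_4}$), one may instead route through the co-biconvex characterization of Theorem~\ref{thm:concave-cobip} and Tucker's consecutive-ones forbidden submatrices, for which the bipartite adjacency matrix does pin down the induced subgraph. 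A secondary technical point is to check that all of this — forming $A^\ast$, passing to a transpose, and the local searches — stays within the linear-time budget, which should follow from the guarantees cited above.
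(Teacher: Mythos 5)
Your overall architecture is close to the paper's: run the proper-circular-arc detector of Theorem~\ref{thm:algo-pca}, output its answer unless it is $\overline{H_2}$ or $\overline{H_4}$, and in that fallback case use the circular-ones machinery on the augmented adjacency matrix. However, the step you yourself flag as the main obstacle --- the ``translation lemma'' --- is a genuine gap, and it is not merely unproven: as stated it is false for the infinite families. If $A^\ast_{\rho,\sigma}=\MIast{k}$ or $\overline{\MIast{k}}$, the set $W$ of row- and column-indices induces $\overline{C_{2k}}$ \emph{only when $G[W]$ is co-bipartite} (this is exactly the paper's Lemma~\ref{lem:co--C_2k}, whose hypothesis is that $G'$ is co-bipartite). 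Your parenthetical claim that the fallback triggered by $\overline{H_2}$ or $\overline{H_4}$ leads ``precisely'' to a co-bipartite $W$ is wrong: nothing prevents $\overline{G[W]}$ from containing a chordless odd cycle, and indeed if $G$ itself were co-bipartite one would never reach the hard case at all, since Theorem~\ref{thm:concave-cobip} would already supply a co-bipartite obstruction. No amount of $O(1)$ ``local inspection around $W$'' resolves this, and re-invoking Theorem~\ref{thm:algo-pca} on $G$ just returns $\overline{H_2}$ or $\overline{H_4}$ again.

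What the paper does at this juncture is the real content of the result. Its Algorithm~\ref{algo:3} is designed to output \emph{either} a forbidden induced subgraph \emph{or} a chordless odd cycle $C$ in $\overline{G}$ (the non-co-bipartite escape route), and the paper's Lemma~\ref{lem:main} --- an eighteen-case structural analysis occupying several pages --- shows how to combine such a cycle $C$ with an induced $\overline{H_2}$ or $\overline{H_4}$ to extract, in linear time, an induced $C_4^\ast$, $\overline{C_6}$, $\overline{H_3}$, $\overline{\BIII{1}}$, or $\overline{C_{2k+1}^\ast}$. Your proposal has no substitute for this lemma; without it the hard case does not close, so the proof is incomplete at its decisive point.
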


The following lemma is key in our proof of the above two results.

\begin{lem}\label{lem:main} If $H$ is a graph containing an induced subgraph $J$ isomorphic to $H_2$ or $H_4$ and $H$ has a chordless odd cycle $C$, then $H$ contains an induced $\overline{C_4^\ast}$, $C_6$, $H_3$, $\BIII{1}$, or $C_{2k+1}^\ast$ for some $k\geq 1$. 
Moreover, there is an algorithm that, given a graph $G$, an induced subgraph $F$ of $G$ isomorphic to $\overline{H_2}$ or $\overline{H_4}$, and a chordless odd cycle $C$ in $\overline G$, finds an induced subgraph of $G$ isomorphic to $C_4^*$, $\overline{C_6}$, $\overline{H_3}$, $\overline{\BIII{1}}$, or $\overline{C_{2k+1}^*}$ for some $k\geq 1$ in $O(n+m)$ time, where $n$ and $m$ denote the number of vertices and edges of $G$.
\end{lem}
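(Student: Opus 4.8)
The two assertions are equivalent under complementation: writing $G=\overline H$ and $F=\overline J$ and noting that $C$ is then a chordless odd cycle of $\overline G=H$, an induced subgraph of $G$ isomorphic to one of $C_4^*$, $\overline{C_6}$, $\overline{H_3}$, $\overline{\BIII{1}}$, $\overline{C_{2k+1}^*}$ is exactly the complement of an induced subgraph of $H$ isomorphic to one of $\overline{C_4^*}$, $C_6$, $H_3$, $\BIII{1}$, $C_{2k+1}^*$, respectively. The plan is therefore to prove the first, purely structural, assertion by analysing how the cycle $C$ meets the induced copy $J$ of $H_2$ or $H_4$, and then to obtain the algorithm by following the same case analysis; the extra care needed there is only that, having access to $G$ rather than to $\overline G$, adjacencies of $\overline G$ between a bounded set of vertices and the rest are read off the adjacency lists of $G$ in time proportional to their total length.

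First I would dispose of the case responsible for the unbounded obstruction: if some vertex $v\in V(J)\setminus V(C)$ has no neighbour on $C$ in $H$, then $H[V(C)\cup\{v\}]$ is isomorphic to $C_{2k+1}^*$ with $2k+1=|V(C)|$, and we are done. Algorithmically this is the test ``$v$ is adjacent in $G$ to every vertex of $V(C)$'', run over the $O(1)$ vertices of $F$ in total $O(n+m)$ time, with output $V(C)\cup\{v\}$, a set of size $O(n)$. So from this point on every vertex of $V(J)\setminus V(C)$ has a neighbour on $C$.

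The heart of the proof is this remaining, ``dominating'', case, and it is there that I expect the real work --- and the main risk of error --- to lie. I would combine three ingredients: (a) $J$ is triangle-free, so $V(J)\cap V(C)$ is small (at most two vertices when $|V(C)|=3$, and in general it induces a common induced subgraph of the chordless cycle $C$ and of $H_2$ or $H_4$); (b) the explicit adjacency structure of the two bounded graphs $H_2$ and $H_4$; and (c) re-routing along $C$: if a vertex $v\in V(J)\setminus V(C)$ is adjacent to two vertices of $C$ at distance at least two along $C$, then replacing an arc of $C$ by $v$ together with its two edges to the arc's endpoints yields either a strictly shorter chordless odd cycle or --- when the removed arc is even and $v$ misses its interior --- a vertex anticomplete to the new cycle, hence again a $C_{2k+1}^*$. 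Pushing (c) to completion, or equivalently taking $C$ to be a shortest odd cycle in the structural argument, forces the neighbourhood on $C$ of each vertex of $V(J)\setminus V(C)$ to be a very restricted set (essentially a single vertex, two consecutive vertices, or three consecutive vertices of $C$); combined with (a) and (b) this confines us to finitely many ``attachment patterns'' of the at most six vertices of $J$ onto $C$, to be enumerated separately for $H_2$ and $H_4$ and with the split $|V(C)|=3$ versus $|V(C)|\ge 5$. For each pattern I would exhibit an explicit set of at most six vertices inducing one of $\overline{C_4^*}$, $C_6$, $H_3$, $\BIII{1}$. The obstacle is genuinely the bookkeeping: there are many patterns, and one must check that none of them forces an obstruction outside the claimed five-element list.

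For the $O(n+m)$ bound the delicate point is that recomputing a shortest odd cycle would be too slow, so the algorithm should not iterate the re-routing (c) blindly; instead it applies re-routings directly, charging the $O(|V(C)|)$ cost of each to the vertices it removes from the current cycle (which never re-enter it), so that all re-routings together cost $O(n+m)$. Every remaining test in the dominating case involves only $C$, the $O(1)$ vertices of $F$, and their $G$-neighbourhoods, and deciding whether a fixed bounded-size vertex set of $G$ induces a prescribed graph costs $O(n+m)$; since the only output of unbounded size is the set $V(C)\cup\{v\}$ of the preceding paragraph, the procedure runs in $O(n+m)$ time. Together with the correctness supplied by the case analysis, this establishes both assertions of the lemma.
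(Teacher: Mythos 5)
Your reduction of the second assertion to the first via complementation, and your opening step (a vertex of $J$ with no neighbour on $C$ yields $C_{2k+1}^\ast$), match the paper. But the heart of your argument --- the ``dominating'' case --- has a genuine gap, and the claim on which it rests is false. You assert that after re-routing (equivalently, taking $C$ to be a shortest odd cycle) the configuration reduces to \emph{finitely many attachment patterns}, each certified by a set of \emph{at most six vertices} inducing $\overline{C_4^\ast}$, $C_6$, $H_3$, or $\BIII 1$. This cannot work: even when each vertex of $J$ attaches to a tiny set of consecutive vertices of $C$, the cyclic distances along $C$ between the attachment points of \emph{different} vertices of $J$ are unbounded parameters, so the patterns are not finite; and in the genuinely hard configurations the correct conclusion is an \emph{unbounded} obstruction, namely an odd cycle avoiding $N_H[v]$ for some $v\in V(J)$ that \emph{does} have a neighbour on $C$ elsewhere, giving a large $C_{2k+1}^\ast$. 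The paper's own case analysis produces such cycles throughout (e.g.\ ``$x_1,a_1,\ldots,a_s,y_1,x_1$ is an odd cycle in $H-N_H[x_2]$''), which is precisely why $C_{2k+1}^\ast$ for all $k$ must remain available as an output in the main case, not only in your opening step. Your single-vertex re-routing cannot eliminate these configurations, because the shorter odd cycles that arise there pass through \emph{two} vertices of $J$ (an adjacent pair $x_i,y_j$) and a long subpath of $C$; detecting and exploiting them is exactly the content of the paper's argument, which extracts one well-chosen chordless subpath $P=a_1,\ldots,a_s$ of $C$ between the attachment sets $W_X$ and $W_Y$ (via an alternating-colour traversal controlling the parity of $s$) and then runs an eighteen-case analysis on how $a_1$ and $a_s$ attach to $J$. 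Your re-routing step is also incorrect as stated: bypassing an even arc of length $2$ does not shorten the cycle, and the bypassed interior vertex is adjacent to both endpoints of the arc, which lie on the new cycle, so it is not anticomplete to it; moreover ``two neighbours at distance two on $C$'' survives the shortest-odd-cycle restriction and is missing from your list of admissible neighbourhoods.

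On the complexity side, your charging scheme for iterated re-routings is moot given the structural gap, but note that the paper obtains the $O(n+m)$ bound much more simply: since $C$ is chordless in $\overline G$, the set $V(C)$ is nearly a clique in $G$, so $\vert V(C)\vert^2\in O(m)$; one can therefore afford to build $H=\overline{G[V(F)\cup V(C)]}$ explicitly and run the entire constructive case analysis on it in $O(\vert V(H)\vert+\vert E(H)\vert)\subseteq O(n+m)$ time, with no need to simulate $\overline G$ through the adjacency lists of $G$.
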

\begin{proof} If a vertex $v$ of $H$ is such that $H-N_H[v]$ has an odd cycle, then $H-N_H[v]$ has some chordless odd cycle $C'$ and, consequently, $V(C')\cup\{v\}$ induces $C_{2k+1}^\ast$ in $H$ for some $k\geq 1$. Hence, in order to prove the first assertion of the lemma, it suffices to prove that either $H-N_H[v]$ has an odd cycle for some $v\in V(H)$ or $H$ contains an induced $\overline{C_4^\ast}$, $C_6$, $H_3$, or $\BIII{1}$ as an induced subgraph.

\begin{figure}
\ffigbox[\textwidth]{%
\begin{subfloatrow}
\ffigbox[0.3\textwidth]{\includegraphics{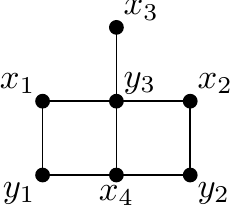}}{\caption{$H_2$}}
\ffigbox[0.3\textwidth]{\includegraphics{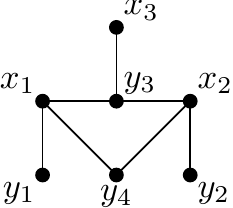}}{\caption{$H_4$}}
\end{subfloatrow}}
{\caption{Labeling of the vertices of $J$ for the proof of Lemma~\ref{lem:main}}\label{fig:labelJ}}
\end{figure}

If some vertex $v$ of $J$ has no neighbor in $V(C)$, then $C$ is an odd cycle in $H-N_H[v]$. Hence, we assume, without loss of generality, that every vertex of $J$ has some neighbor in $V(C)$. We label the vertices of $J$ as in Figure~\ref{fig:labelJ}, depending on whether $J$ is isomorphic to $H_2$ or $H_4$. Let $X=\{x_1,x_2,x_3,x_4\}\cap V(J)$ and $Y=\{y_1,y_2,y_3,y_4\}\cap V(J)$. Let $W_X$ be the set of neighbors of $X$ in $V(C)$, $W_Y$ the set of neighbors of $Y$ in $V(C)$, and $W=W_X\cup W_Y$.

We claim that either $H$ contains $H_3$ or $\BIII{1}$ as an induced subgraph or there is a chordless path $P=a_1,a_2,\ldots,a_s$ in $H$ for some $s\geq 1$ satisfying all the following assertions:
\begin{enumerate}[(i)]
 \item\label{it:P1} $V(P)\subseteq V(C)$;
 \item\label{it:P2} $V(P)\cap W=\{a_1,a_s\}$;
 \item\label{it:P3} $s$ is odd if and only if $\{a_1,a_s\}\cap W_X\neq\emptyset$ and $\{a_1,a_s\}\cap 
 W_Y\neq\emptyset$;
 \item\label{it:P4} if $W_X\cap W_Y\neq\emptyset$, then $s=1$.
\end{enumerate}
If there is some vertex $w\in W_X\cap W_Y$, then the claim holds by letting $s=1$ and $a_1=w$. Hence, we assume in the remaining of this paragraph, without loss of generality, that $W_X\cap W_Y=\emptyset$ (i.e., no vertex of $C$ is adjacent simultaneously to a vertex in $X$ and to a vertex in $Y$). We choose some $w_0\in W_X$ and traverse $C$ (in any of the two possible directions) starting at $w_0$, coloring the traversed vertices with alternating colors $0$ and $1$, starting with color $0$ for $w_0$. We stop whenever a vertex in $W_X$ gets color $1$ or a vertex in $W_Y$ gets color $0$. Since $C$ is odd, the process stops, at the latest, after traversing all of $C$ and recoloring $w_0$ with color $1$. Let $q$ be the last colored vertex (which necessarily belongs to $W$) and let $p$ be the last vertex of $W$ colored before $q$. Let $a_1,a_2,\ldots,a_s$ be the vertices of $C$ traversed from $a_1=p$ to $a_s=q$. By construction, $P=a_1,a_2,\ldots,a_s$ satisfies assertions~\eqref{it:P1} to~\eqref{it:P4} above. In order to complete the proof of the claim, we assume that $P$ is not a chordless path and we will prove that $H$ contains $H_3$ or $\BIII{1}$ as an induced subgraph. As $W_X\cap W_Y=\emptyset$, necessarily $\vert W\vert\geq 2$ and, by construction, $a_1\neq a_s$. As we are assuming that $P$ is not a chordless path in $H$, necessarily $s=\vert V(C)\vert$ and $W=\{a_1,a_s\}$. Since $s$ is odd, we assume, without loss of generality, that $a_1\in W_X$ and $a_s\in W_Y$. As every vertex of $J$ has a neighbor in $W$ and $W_X\cap W_Y=\emptyset$, every vertex of $X$ is adjacent to $a_1$ and every vertex of $Y$ is adjacent to $a_s$. Therefore, $\{a_1,a_2,a_3,x_1,y_1,x_2,y_2\}$ induces $H_3$ or $\BIII{1}$ in $H$ depending on whether $s=3$ or $s\geq 5$, respectively. This completes the proof of the claim.

If $H$ contains $H_3$ or $\BIII{1}$ as an induced subgraph, the first assertion of the lemma holds. Thus, from now on, we assume, without loss of generality, that there is a chordless path $P=a_1,a_2,\ldots,a_s$ in $H$ satisfying assertions \eqref{it:P1} to \eqref{it:P4} of the above paragraph.

We claim that $V(P)\cap V(J)=\emptyset$. Suppose, for a contradiction, that there is some $j\in V(P)\cap V(J)$. Since $V(P)\subseteq V(C)$ and every vertex of $J$ has some neighbor in $X\cup Y$, $j\in W$. Thus, $j\in W\cap V(P)=\{a_1,a_s\}$. If $s=1$, then $j=a_1\in W_X\cap W_Y$, which is a contradiction because no vertex of $J$ has neighbors in $X$ and $Y$ simultaneously. Thus, $s\geq 2$ and, consequently, $a_2\in W$ or $a_{s-1}\in W$ depending on whether $j=a_1$ or $j=a_s$, respectively. Since $W\cap V(P)=\{a_1,a_s\}$, necessarily $s=2$. As $s$ is even, either $a_1,a_2\in W_X$ or $a_1,a_2\in W_Y$. If $a_1,a_2\in W_X$, then $j\in Y$ and the vertex among $a_1$ and $a_2$ which is different from $j$ belongs to $W_X\cap W_Y$, which contradicts $s\neq 1$. Similarly, if $a_1,a_2\in W_Y$, then $j\in X$ and the vertex among $a_1$ and $a_2$ different from $j$ belongs to $W_X\cap W_Y$. These contradictions prove the claim.

The following facts will be useful in the remaining of the proof.
\begin{enumerate}[{Fact }1:]
 \item \emph{If $a_1\in W_X$ and $a_s\in W_Y$, then the only possible neighbor in $P$ of a vertex $x\in X$ is $a_1$.} Let $x\in X$ having some neighbor $w$ in $P$. Since $V(P)\subseteq V(C)$, $w\in W_X\cap V(P)\subseteq W\cap V(P)=\{a_1,a_s\}$. On the one hand, if $w=a_1$, there is nothing to prove. On the other hand, if $w=a_s$, then $a_s\in W_X\cap W_Y$, which implies $s=1$ and, consequently $w=a_1$.

 \item \emph{Symmetrically, if $a_1\in W_X$ and $a_s\in W_Y$, then the only possible neighbor in $P$ of a vertex $y\in Y$ is $a_s$.}
 
 \item \emph{If $a_1\in W_X$, $a_s\in W_X$, and $s\neq 1$, then no vertex in $Y$ has a neighbor in $P$.} Suppose, on the contrary, that some $y\in Y$ has a neighbor $w$ in $P$. Since $V(P)\subseteq V(C)$, $w\in W_Y\cap V(P)\subseteq W\cap V(P)=\{a_1,a_s\}\subseteq W_X$. Hence, $W_X\cap W_Y\neq\emptyset$, which implies $s=1$. This contradiction proves the claim.
 
 \item \emph{Symmetrically, if $a_1\in W_Y$, $a_s\in W_Y$, and $s\neq 1$, then no vertex in $X$ has a neighbor in $P$.}
\end{enumerate}
From this point on, we will repeatedly use the above facts with no further mention to them.
 
Below, we consider all possible cases up to symmetry. In Cases~\ref{case1} to \ref{case9}, $s$ is odd, $a_1\in W_X$, and $a_s\in W_Y$, whereas in Cases~\ref{case10} to \ref{case18}, $s$ is even and $a_1$ and $a_s$ belong both to $W_X$ or both to $W_Y$.
\begin{enumerate}[{Case }1:]
 \item\label{case1}\emph{$s$ is odd, $a_1$ is adjacent to $x_1$, and $a_s$ is adjacent to $y_1$}. If $a_1$ is nonadjacent to $x_2$ or $a_s$ is nonadjacent to $y_2$, then $x_1,a_1,\ldots,a_s,y_1,x_1$ is an odd cycle in $H-N_H[x_2]$ or $H-N_H[y_2]$, respectively. Hence, we assume, without loss of generality, that $x_2$ is adjacent to $a_1$ and $y_2$ is adjacent to $a_s$. If $s=1$, then $\{x_1,y_1,x_2,y_2,a_1\}$ induces $\overline{C_4^*}$ in $H$. Thus, we assume, without loss of generality, that $s\neq 1$. If $a_1$ is nonadjacent to $a_s$, then $\{a_1,x_1,y_1,a_s,y_2,x_2\}$ induces $C_6$ in $H$; otherwise, $\{a_1,x_1,y_1,x_2,y_2,a_2,a_3\}$ induces $H_3$ or $\BIII{1}$ in $H$, depending on whether $s=3$ or $s\geq 5$, respectively.
 
 \item\label{case2}\emph{$s$ is odd, $a_1$ is adjacent to $x_1$, and $a_s$ is adjacent to $y_2$}. If $a_1$ is nonadjacent to $x_3$, then $a_1,a_2,\ldots,a_s,y_2,x_4,y_1,x_1,a_1$ or $a_1,a_2,\ldots,a_s,y_2,x_2,y_4,x_1,a_1$ is an odd cycle in $H-N_H[x_3]$, depending on whether $J$ is isomorphic to $H_2$ or $H_4$, respectively. Hence, we assume, without loss of generality, that $a_1$ is adjacent to $x_3$. If $a_s$ is nonadjacent to $y_1$, then $a_1,a_2,\ldots,a_s,y_2,x_2,y_3,x_3,a_1$ is an odd cycle in $H-N_H[y_1]$; otherwise, we are in Case~\ref{case1}.
 
 \item\label{case3}\emph{$s$ is odd, $a_1$ is adjacent to $x_1$, and $a_s$ is adjacent to $y_3$}. If $a_s$ is nonadjacent to $y_2$, then $a_1,a_2,\ldots,a_s,y_3,x_1,a_1$ is an odd cycle in $H-N_H[y_2]$; otherwise, we are in Case~\ref{case2}.
 
 \item\label{case4}\emph{$s$ is odd, $a_1$ is adjacent to $x_1$, and $a_s$ is adjacent to $y_4$}. If $a_1$ is nonadjacent to $x_3$ or $a_s$ is nonadjacent to $y_2$, then $a_1,a_2,\ldots,a_s,y_4,x_1,a_1$ is an odd cycle in $H-N_H[x_3]$ or $H-N_H[y_2]$, respectively. Hence, we assume, without loss of generality, that $a_1$ is adjacent to $x_3$ and $a_s$ is adjacent to $y_2$. If $a_s$ is nonadjacent to $y_1$, then $a_1,a_2,\ldots,a_s,y_2,x_2,y_3,x_3,a_1$ is an odd cycle in $H-N_H[y_1]$; otherwise, we are in Case~\ref{case1}. 

 \item\label{case5}\emph{$s$ is odd, $a_1$ is adjacent to $x_3$, and $a_s$ is adjacent to $y_1$}. If $a_1$ is adjacent to $x_1$ or $x_2$, then we are in Case~\ref{case1} or in a case symmetric to Case~\ref{case2}, respectively; otherwise, $\{x_1,y_1,x_2,y_2,x_3,y_3,a_1\}$ induces $\BIII{1}$ in $H$.
 
 \item\label{case6}\emph{$s$ is odd, $a_1$ is adjacent to $x_3$, and $a_s$ is adjacent to $y_3$}. If $a_s$ is nonadjacent to $y_1$, then $a_1,a_2,\ldots,a_s,y_3,x_3,a_1$ is an odd cycle in $H-N_H[y_1]$; otherwise, we are Case~\ref{case5}.
 
 \item\label{case7}\emph{$s$ is odd, $a_1$ is adjacent to $x_3$, and $a_s$ is adjacent to $y_4$}. If $a_s$ is nonadjacent to $y_1$, then $a_1,a_2,\ldots,a_s,y_4,x_2,y_3,x_3,a_1$ is an odd cycle in $H-N_H[y_1]$; otherwise, we are in Case~\ref{case5}.

 \item\label{case8}\emph{$s$ is odd, $a_1$ is adjacent to $x_4$, and $a_s$ is adjacent to $y_1$}. If $a_1$ is nonadjacent to $x_2$, then $a_1,a_2,\ldots,a_s,y_1,x_4,a_1$ is an odd cycle in $H-N_H[x_2]$; otherwise, we are in a case symmetric to Case~\ref{case2}.
 
 \item\label{case9}\emph{$s$ is odd, $a_1$ is adjacent to $x_4$, and $a_s$ is adjacent to $y_3$}. If $a_1$ is adjacent to $x_1$ or $x_2$, then we are in Case~\ref{case3} or in a case symmetric to it, respectively. If $a_s$ is adjacent to $y_1$ or $y_2$, we are in Case~\ref{case7} or in a case symmetric to it. Thus, we assume, without loss of generality, that $a_1$ is nonadjacent $x_1$ and $x_2$ and $a_s$ is nonadjacent to $y_1$ and $y_2$. Hence, 
 either $\{x_1,y_1,x_2,y_2,y_3,x_4,a_1\}$ induces $H_3$ in $H$ or $\{x_1,y_1,x_2,y_2,y_4,a_1,a_2\}$ induces $\{x_1,y_1,x_2,y_2,x_4,a_1,a_2\}$ induces $\BIII{1}$ in $H$, depending on whether $s=1$ or $s\geq 3$.
 
 \item\label{case10}\emph{$s$ is even, $a_1$ is adjacent to $x_1$, and $a_s$ is adjacent to $x_3$}. The graph $H-N_H[y_2]$ has the odd cycle $a_1,a_2,\ldots,a_s,x_3,y_3,x_1,a_1$.
 
 \item\label{case11}\emph{$s$ is even, $a_1$ is adjacent to $x_1$, and $a_s$ is adjacent to $x_2$}. If $a_1$ and $a_s$ are nonadjacent to $x_3$, then $a_1,a_2,\ldots,a_s,x_2,y_2,x_4,y_1,x_1,a_1$ or $a_1,a_2,\ldots,a_s,x_2,y_4,x_1$ is an odd cycle in $H-N_H[x_3]$, depending on whether $J$ is isomorphic to $H_2$ or $H_4$, respectively; otherwise, we are in Case~\ref{case10} or in a case symmetric to it.
 
 \item\label{case12}\emph{$s$ is even, $a_1$ is adjacent to $x_1$, and $a_s$ is adjacent to $x_4$}. Without loss of generality, $a_s$ is nonadjacent to $x_2$, since otherwise we are in Case~\ref{case11}. Without loss of generality, $a_1$ is adjacent to $x_2$, since otherwise, $a_1,a_2,\ldots,a_s,x_4,y_1,x_1,a_1$ is an odd cycle in $H-N_H[x_2]$. Without loss of generality, $a_s$ is nonadjacent to $x_1$, since otherwise we are in a case symmetric to Case~\ref{case11}. If $s=2$, then either $\{x_1,x_2,y_1,y_2,x_4,a_1,a_2\}$ induces $H_3$ in $H$ or $\{a_1,x_2,y_2,x_4,y_1,x_1\}$ induces $C_6$ in $H$, depending on whether $a_1$ is adjacent or nonadjacent to $x_4$, respectively; otherwise, $s\geq 4$ and $\{x_1,y_1,x_2,y_2,a_1,a_2,a_3\}$ induces $\BIII{1}$ in $H$.
 
 \item\label{case13}\emph{$s$ is even, $a_1$ is adjacent to $x_3$, and $a_s$ is adjacent to $x_4$}. If $a_1$ is nonadjacent to $x_1$ and $x_2$, then $\{x_1,y_1,x_2,y_2,x_3,y_3,a_1\}$ induces $\BIII{1}$; otherwise, we are in Case~\ref{case12} or a case symmetric to it.
 
 \item\label{case14}\emph{$s$ is even, $a_1$ is adjacent to $y_1$, and $a_s$ is adjacent to $y_2$}. Either 
 $a_1,a_2,\ldots,a_s,y_2,x_4,y_1,a_1$ or $a_1,a_2,\ldots,a_s,y_2,x_2,y_4,x_1,y_1,a_1$ is an odd cycle in $H-N_H[x_3]$, depending on whether $J$ is isomorphic to $H_2$ or $H_4$, respectively.
 
 \item\label{case15}\emph{$s$ is even, $a_1$ is adjacent to $y_1$, and $a_s$ is adjacent to $y_3$}. Without loss of generality, $a_s$ is nonadjacent to $y_2$, since otherwise we are in Case~\ref{case14}. Without loss of generality, $a_1$ is adjacent to $y_2$, since otherwise $a_1,a_2,\ldots,a_s,y_3,x_1,y_1,a_1$ is an odd cycle in $H-N_H[y_2]$. Without loss of generality, $a_s$ is nonadjacent to $y_1$, since otherwise we are in a case symmetric to Case~\ref{case14}. If $s=2$, then either $\{x_1,y_1,x_2,y_2,y_3,a_1,a_2\}$ induces $H_3$ in $H$ or $\{x_1,y_1,x_2,y_2,y_3,a_1\}$ induces $C_6$ in $H$, depending on whether $a_4$ is adjacent or nonadjacent to $y_3$, respectively; otherwise, $s\geq 4$ and $\{x_1,y_1,x_2,y_2,a_1,a_2,a_3\}$ induces $\BIII{1}$ in $H$.
 
 \item\label{case16}\emph{$s$ is even, $a_1$ is adjacent to $y_1$, and $a_s$ is adjacent to $y_4$}. The graph $H-N_H[x_3]$ has the odd cycle $a_1,a_2,\ldots,a_s,y_4,x_1,y_1,a_1$.
 
 \item\label{case17}\emph{$s$ is even, $a_1$ is adjacent to $y_3$, and $a_s$ is adjacent to $y_4$}. If $a_1$ or $a_s$ is adjacent to $y_1$ or $y_2$, then, up to symmetry, we are in Case~\ref{case15} or Case~\ref{case16}; otherwise, $\{x_1,y_1,x_2,y_2,y_3,a_1,a_2\}$ induces $\BIII{1}$ in $H$.
 
 \item\label{case18}\emph{$s$ is even, $a_1$ and $a_s$ have a common neighbor $w$ in $J$}. If $a_1$ or $a_s$ is adjacent to some vertex of $J$ different from $w$, then we are in one of the preceding cases; otherwise, $w,a_1,a_2,\ldots,a_s,w$ is an odd cycle in $H-N_H[v]$, where $v$ is any vertex of $J$ different from $w$ and nonadjacent to $w$.
\end{enumerate}
We have verified that, in all possible cases, either $H-N_H[v]$ has an odd cycle for some $v\in V(H)$ or $H$ contains an induced $\overline{C_4^\ast}$, $C_6$, $\BIII{1}$, or $H_3$. As explained in the first paragraph of the proof, this suffices to prove the first assertion of the lemma.

Consider now the second assertion of the lemma. Suppose we are given a graph $G$, an induced subgraph $F$ of $G$ isomorphic to $\overline{H_2}$ or $\overline{H_4}$, and a chordless odd cycle $C$ in $\overline G$. Let $H=\overline{G[V(F)\cup V(C)]}$ and let $J=\overline F$. By construction, $J$ is an induced subgraph of $H$ isomorphic to $H_2$ or $H_4$ and $C$ is a chordless odd cycle in $H$. Let $n$ and $m$ denote the number of vertices of $G$, respectively. By hypothesis, $\vert V(C)\vert^2\in O(m)$ and, by construction, $\vert V(H)\vert\in O(\vert V(C)\vert)$. Hence, $\vert V(H)\vert^2\in O(m)$. Therefore, $O(n+m)$ time is enough to compute $G[V(F)\cup V(C)]$, compute $H$ as the complement of $G[V(F)\cup V(C)]$, and then apply the constructive proof of the first assertion of the lemma to produce a subset of $V(H)$ inducing in $G$ one of the following graphs: $C_4^*$, $\overline{C_6}$, $\overline{H_3}$, $\overline{\BIII{1}}$, or $\overline{C_{2k+1}}$ for some $k\geq 1$. (In fact, a direct implementation of the constructive proof of the first assertion produces, given $H$, $J$, and $C$, a vertex set inducing $\overline{C_4^*}$, $C_6$, $H_3$, or $\BIII{1}$, or $C_{2k+1}$ for some $k\geq 1$ in $H$ in $O(\vert V(H)\vert+\vert E(H)\vert)$ time.)
\end{proof} 

We are now ready to give the proof of our main result.

\begin{proof}[Proof of Theorem~\ref{thm:main}] For ease of exposition, we introduce $\mathcal F_0=\{\mbox{net},\mbox{tent}^\ast,\overline{H_3},\overline{\BIII{1}}\}\cup\{C_k^\ast\colon\,k\geq 4\}\cup\{\overline{C_{2k}}\colon\,k\geq 3\}\cup\{\overline{C_{2k+1}^\ast}\colon\,k\geq 1\}$, $\mathcal F_1=\{\overline{H_2},\overline{H_4}\}$, and $\mathcal F_2=\{\overline{\BII{1}},\overline{\BII{2}},\overline{\BIII{2}},\linebreak\overline{\BIII{3}}\}$. With this notation, Theorem~\ref{thm:pca} states that proper circular-arc graphs are precisely $(\mathcal F_0\cup\mathcal F_1)$-free graphs and we are to prove that concave-round graphs are precisely $(\mathcal F_0\cup\mathcal F_2)$-free graphs. In addition, let $\mathcal F_{\textrm{co-b}}=\{\overline{\BII{1}},\overline{\BII{2}},\overline{\BIII{1}},\overline{\BIII{2}},\overline{\BIII{3}}\}\cup\{\overline{C_{2k}}\colon\,k\geq 3\}$. With this notation, the equivalence between \eqref{it:cobip2} and \eqref{it:cobip3} in Theorem~\ref{thm:concave-cobip} states that a co-bipartite graph is concave-round if and only if it is $\mathcal F_{\textrm{co-b}}$-free.

As the class of concave-round graphs is hereditary, in order to prove the `only if' part, it suffices to observe that no graph in the set $\mathcal F_0\cup\mathcal F_2$ is concave-round. In fact: (i) since the graphs in $\mathcal F_0$ are not proper circular-arc graphs, those graphs in $\mathcal F_0$ that are not co-bipartite, are not concave-round by virtue of Theorem~\ref{thm:cr-cob->pca}; and (ii) the co-bipartite graphs in $\mathcal F_0$ (namely, $\overline{C_{2k}}$ for each $k\geq 3$ and $\overline{\BIII{1}}$) as well as the graphs in $\mathcal F_2$, all belong to $\mathcal F_{\textrm{co-b}}$ and thus are not concave-round by virtue of Theorem~\ref{thm:concave-cobip}.

In order to prove the `if' part, let $G$ be any minimally not concave-round graph (i.e., $G$ is not concave-round but each induced subgraph of $G$ different from $G$ is concave-round) and suppose, for a contradiction, that $G\notin\mathcal F_0\cup\mathcal F_2$. Because of the minimality of $G$ and the fact that no graph in the set $\mathcal F_0\cup\mathcal F_2$ is concave-round, $G$ is $(\mathcal F_0\cup\mathcal F_2)$-free. Since $G$ is not concave-round, Theorem~\ref{thm:pca->cr} implies that $G$ is not a proper circular-arc graph. Thus, by Theorem~\ref{thm:pca}, $G$ contains some induced subgraph in the set $\mathcal F_0\cup\mathcal F_1$. As $G$ is $\mathcal F_0$-free, $G$ contains $\overline{H_2}$ or $\overline{H_4}$ as an induced subgraph. Notice that $G$ is not co-bipartite, since otherwise the fact that $G$ is a minimal forbidden induced subgraph for the class of concave-round graphs and Theorem~\ref{thm:concave-cobip} would imply that $G\in\mathcal F_{\textrm{co-b}}$, which contradicts $G\notin\mathcal F_0\cup\mathcal F_2$. Hence, there is some chordless odd cycle $C$ in $\overline G$. 
By Lemma~\ref{lem:main}, $G$ contains an induced subgraph in the set $\mathcal F_0$, a contradiction. This contradiction arose from assuming that there was some minimal forbidden induced subgraph $G$ for the class of concave-round graphs such that $G\notin\mathcal F_0\cup\mathcal F_2$. This completes the proof of the `if' part and hence of the theorem.\end{proof}

For the proof of Corollary~\ref{cor:algo-concave-round}, we need a few more results. In the remaining of this section, we assume that the graph $G$ has vertex set $\{1,\ldots,n\}$ and we denote by $M(G)$ the augmented adjacency matrix $M(G)=(m_{ij})$ of $G$ such that $m_{ij}=1$ if and only if $i=j$ or $i$ is adjacent to $j$.

\begin{lem}\label{lem:co--C_2k} Let $G$ be a graph with vertex set $\{1,\ldots,n\}$ and let $\rho$ and $\sigma$ be a row map and a column map, respectively, of the augmented adjacency matrix $M(G)$ of $G$ such that $M(G)_{\rho,\sigma}=\overline{\MI{k'}}$ for some $k'\geq 3$. If the subgraph $G'$ of $G$ induced by $\{\rho(1),\ldots,\rho(k'),\linebreak\sigma(1),\ldots,\sigma(k')\}$ is co-bipartite, then $G'$ is isomorphic to $\overline{C_{2k'}}$.\end{lem}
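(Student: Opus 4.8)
The plan is to translate the hypothesis $M(G)_{\rho,\sigma}=\overline{\MI{k'}}$ into a list of non-adjacencies of $G$ among the vertices of $G'$, read off from it a spanning closed walk of length $2k'$ in $\overline{G'}$, and then use the assumption that $\overline{G'}$ is bipartite to conclude that $\overline{G'}$ is precisely the cycle $C_{2k'}$ on those $2k'$ vertices; taking complements then yields $G'\cong\overline{C_{2k'}}$.

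First I would spell out the matrix identity. Write $M(G)=(m_{ij})$ and recall that $\MI{k'}$ has its $1$'s exactly in positions $(i,i)$ for $i\in[k']$, in positions $(i,i+1)$ for $i\in[k'-1]$, and in position $(k',1)$; hence $\overline{\MI{k'}}$ has a $0$ in exactly those positions, and the identity says $m_{\rho(i),\sigma(j)}=0$ precisely when $(i,j)$ is one of those positions and $m_{\rho(i),\sigma(j)}=1$ otherwise. In particular, reading $\sigma$ cyclically with $\sigma(k'+1):=\sigma(1)$, we get $m_{\rho(i),\sigma(i)}=0$ and $m_{\rho(i),\sigma(i+1)}=0$ for every $i\in[k']$; since $m_{uu}=1$ for all $u$, this forces $\rho(i)\neq\sigma(i)$ and $\rho(i)\neq\sigma(i+1)$, and moreover neither of these pairs is an edge of $G$. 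Consequently $\sigma(i)\rho(i)$ and $\sigma(i+1)\rho(i)$ are edges of $\overline{G'}$ for every $i$, and these $2k'$ edges constitute the closed walk
\[ W\colon\quad \sigma(1),\,\rho(1),\,\sigma(2),\,\rho(2),\,\ldots,\,\sigma(k'),\,\rho(k'),\,\sigma(1) \]
of length $2k'$ in $\overline{G'}$. Since $W$ visits every vertex of $G'$, the graph $\overline{G'}$ is connected.

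Next I would invoke bipartiteness. Being connected and bipartite, $\overline{G'}$ has a unique bipartition $(A,B)$, and along the closed walk $W$ the vertices alternate between $A$ and $B$, consistently since $W$ has even length. Choosing the labels so that $\sigma(1)\in A$, it follows that $\sigma(j)\in A$ for all $j$ and $\rho(i)\in B$ for all $i$; since $A\cap B=\emptyset$, the sets $\{\sigma(1),\ldots,\sigma(k')\}$ and $\{\rho(1),\ldots,\rho(k')\}$ are disjoint. Combined with the injectivity of $\rho$ and $\sigma$, this gives $|V(G')|=2k'$, with $A=\{\sigma(1),\ldots,\sigma(k')\}$ and $B=\{\rho(1),\ldots,\rho(k')\}$. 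In particular the $2k'$ vertices listed in $W$ are pairwise distinct, so $W$ is a cycle $C_{2k'}$ in $\overline{G'}$.

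Finally I would check that $\overline{G'}$ has no further edge. There is none inside $A$ or inside $B$, by bipartiteness. An edge between $A$ and $B$, say $\sigma(j)\rho(i)$, means $m_{\rho(i),\sigma(j)}=0$ (and $\rho(i)\neq\sigma(j)$ automatically), which by the above description of $\overline{\MI{k'}}$ forces $(i,j)$ to be $(i,i)$ or $(i,i+1)$ read cyclically; that is, $\sigma(j)\rho(i)$ is one of the $2k'$ edges of $W$. Hence $E(\overline{G'})$ is exactly the edge set of the cycle $W$, so $\overline{G'}\cong C_{2k'}$ and therefore $G'\cong\overline{C_{2k'}}$. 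The argument is short, and its one load-bearing step is the appeal to bipartiteness: it is precisely what rules out the images of $\rho$ and $\sigma$ overlapping (so that $G'$ genuinely has $2k'$ vertices) and what rules out chords inside $A$ or $B$, and without it the statement fails in general. The only point that needs a little care is the observation that $\overline{G'}$ is connected, which holds because $W$ spans $V(G')$, so that its bipartition is unique and the alternation argument goes through.
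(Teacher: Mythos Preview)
Your proposal is correct and follows essentially the same approach as the paper's proof: extract the closed walk $W=\sigma(1),\rho(1),\ldots,\sigma(k'),\rho(k'),\sigma(1)$ in $\overline{G'}$ from the zero entries of $\overline{\MI{k'}}$, use bipartiteness to force the images of $\rho$ and $\sigma$ into opposite sides of a bipartition (hence disjoint and each independent), and then read the remaining $1$-entries of $\overline{\MI{k'}}$ as the absence of chords. Your treatment is slightly more explicit than the paper's on the bipartition step---you pass through connectedness and uniqueness of the bipartition, whereas the paper simply asserts that $X$ and $Y$ are independent and disjoint---but this is a presentational difference, not a mathematical one (in fact any bipartition of $\overline{G'}$ suffices for the alternation argument along $W$, so connectedness is not strictly needed).
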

\begin{proof} Let $x_i=\rho(i)$ and $y_i=\sigma(i)$ for each $i\in[k']$, $X=\{x_1,\ldots,x_{k'}\}$, and $Y=\{y_1,\ldots,y_{k'}\}$. As $\rho$ and $\sigma$ are injective, $\vert X\vert=\vert Y\vert=k'$. Let $G'=G[X\cup Y]$. We assume that $G'$ is co-bipartite and we will prove that $G'$ is isomorphic to $\overline{C_{2k'}}$. Since $M(G)_{\rho,\sigma}=\overline{\MI{k'}}$, the entries $(x_i,y_i)$ and $(x_i,y_{i+1})$ of $M(G)$ are $0$'s for each $i\in[k']$, where $y_{k'+1}$ stands for $y_1$. Thus, $W=y_1,x_1,y_2,x_2,\ldots,y_{k'},x_{k'},y_1$ is a closed walk of length $2k'$ on $\overline{G'}$. As $\overline{G'}$ is bipartite, $X$ and $Y$ are independent sets in $\overline{G'}$ and $X\cap Y=\emptyset$. Therefore, for each $i,j\in[k']$ such that $j\neq i,i+1\pmod{k'}$, the reason why the $(x_i,y_j)$-entry of $M(G)$ is $1$ is that $x_i$ and $y_j$ are different and nonadjacent in $\overline{G'}$. We have proved that $W$ is a chordless cycle on $2k'$ vertices in $\overline{G'}$ and, consequently, $G'$ is isomorphic to $\overline{C_{2k'}}$.
\end{proof}

The following result is a consequence of our findings about the consecutive-ones property for rows and columns in the preceding section.

\begin{algorithm2e}[t!]\DontPrintSemicolon
 \KwIn{A graph $G$ which is not concave-round with vertex set $\{1,\ldots,n\}$}
 \KwOut{Either an induced subgraph of $G$ which is a minimal forbidden induced subgraph for the class of concave-round graphs or a chordless odd cycle in $\overline G$}
 Let $M$ be the augmented adjacency matrix $M(G)$ of $G$\;\label{algo3:step1}
 Find maps $\rho_D$ and $\sigma_D$ such that $M_{\rho_D,\sigma_D}\in\ForbRowCol\cup\{\MIast 3\trans,\overline{\MIast 3\trans}\}$\;\label{algo3:step2}
 Let $k'$ and $\ell'$ be the number of rows and columns of $M_{\rho_D,\sigma_D}$\;\label{algo3:step2b}
 $x_1:=\rho_D(1)$, $x_2:=\rho_D(2)$, \ldots, $x_{k'}:=\rho_D(k')$\;\label{algo3:step3}
 $y_1:=\sigma_D(1)$, $y_2:=\sigma_D(2)$, \ldots, $y_{\ell'-1}:=\sigma_D(\ell'-1)$, $z=\sigma_D(\ell')$\;\label{algo3:step4}
 \If{$k'\leq 6$}{\label{algo3:cond1}
 {
  $G':=G[\{x_1,\ldots,x_{k'},y_1,\ldots,y_{\ell'},z\}]$\;\label{algo3:step5}
  \Return an induced subgraph of $G'$ which is a minimal forbidden induced subgraphs for the class of concave-round graphs\;\label{algo3:step6}}
 }
 \Else
 {\If{$M_{\rho_D,\sigma_D}=\MIast{k'}$}
   {\label{algo3:cond2}
     $G':=G[\{x_1,x_2,x_4,y_1,y_3,y_4\}]\;$\label{algo3:step7}
   }
  \Else{$G':=G[\{x_1,\ldots,x_{k'},y_1,\ldots,y_{k'}\}]$;\label{algo3:step8}}
  \If{$\overline{G'}$ is bipartite}{\label{algo3:cond3}\Return $G'$\;\label{algo3:step9}}
  \Else{\Return a chordless odd cycle in $\overline{G'}$\;\label{algo3:step10}}
 }
 \caption{Finds a minimal forbidden induced subgraph for the class of concave-round graphs or a chordless odd cycle in the complement}\label{algo:3}
\end{algorithm2e}

\begin{thm}\label{thm:forb-or-cycle} Given a graph $G$ which is not concave-round, Algorithm~\ref{algo:3} finds either an induced subgraph of $G$ which is a minimal forbidden induced subgraph for the class of concave-round graphs or a chordless odd cycle in $\overline G$. Moreover, Algorithm~\ref{algo:3} can be implemented to run in linear time.\end{thm}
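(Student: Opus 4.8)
The plan is to verify correctness and the linear-time bound of Algorithm~\ref{algo:3} case by case, following the branching structure of the algorithm. First I would observe that, since $G$ is not concave-round, Theorem~\ref{thm:cons-circ} (or rather the characterization of concave-round graphs via the circular-ones property of $M(G)$ recalled in the introduction) guarantees that $M=M(G)$ does not have the circular-ones property for rows, hence a fortiori not for rows and columns; therefore Theorem~\ref{thm:algo-circRC}, applied through Algorithm~\ref{algo:2}, produces the maps $\rho_D,\sigma_D$ of line~\ref{algo3:step2} with $M_{\rho_D,\sigma_D}\in\ForbRowCol\cup\{\MIast 3\trans,\overline{\MIast 3\trans}\}$. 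From here the argument splits according to whether $k'\le 6$ or $k'\ge 7$.

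In the branch $k'\le 6$ (line~\ref{algo3:cond1}), the submatrix $M_{\rho_D,\sigma_D}$ is one of finitely many matrices, so the induced subgraph $G'=G[\{x_1,\dots,x_{k'},y_1,\dots,y_{\ell'},z\}]$ has bounded size; I would argue that $G'$ is not concave-round because its augmented adjacency matrix contains $M_{\rho_D,\sigma_D}$ (a forbidden configuration for the circular-ones property) — here one must check that the rows and columns selected by $\rho_D$ and $\sigma_D$ together with the diagonal ones of $M(G)$ indeed force $G'$ not to have the property, which is a finite verification since $\ForbRowCol$ restricted to $k'\le 6$ is a finite set. Consequently $G'$ contains a minimal forbidden induced subgraph for concave-round graphs, which by Theorem~\ref{thm:main} is one of net, tent$^\ast$, $\overline{H_3}$, $\overline{\BII{1}}$, $\overline{\BII{2}}$, $\overline{\BIII{1}}$, $\overline{\BIII{2}}$, $\overline{\BIII{3}}$, or a small $C_k^\ast$, $\overline{C_{2k}}$, or $\overline{C_{2k+1}^\ast}$; since $|V(G')|$ is bounded, it can be extracted in constant time, giving the output of line~\ref{algo3:step6}.

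In the branch $k'\ge 7$, the matrix $M_{\rho_D,\sigma_D}$ must be $\MIast{k'}$ or $\overline{\MIast{k'}}$ (the only members of $\ForbRowCol$ with more than six rows, by Theorem~\ref{thm:circRC}). If it equals $\MIast{k'}$ (line~\ref{algo3:cond2}), then the $6$ rows/columns selected in line~\ref{algo3:step7} induce $\MIast 3$ in $M(G)$, hence $G'=G[\{x_1,x_2,x_4,y_1,y_3,y_4\}]$ is not concave-round and has only six vertices; if instead $M_{\rho_D,\sigma_D}=\overline{\MIast{k'}}$ then $G'=G[\{x_1,\dots,x_{k'},y_1,\dots,y_{k'}\}]$ is the relevant piece. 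In either case one tests whether $\overline{G'}$ is bipartite: if so, then in the first sub-case $G'$ is a bounded graph that is not concave-round and one returns it (after extracting a minimal forbidden subgraph inside it), while in the $\overline{\MIast{k'}}$ sub-case Lemma~\ref{lem:co--C_2k} shows $G'\cong\overline{C_{2k'}}$, a genuine minimal forbidden induced subgraph, so line~\ref{algo3:step9} is correct. If $\overline{G'}$ is not bipartite, one returns a chordless odd cycle of $\overline{G'}$, which is a chordless odd cycle in $\overline G$ as well, matching line~\ref{algo3:step10}.

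The hard part, and the place where care is needed, is the running time: $M(G)$ has $n$ rows and $O(n+m)$ ones, and $\size(M(G))\in O(n+m)$, so line~\ref{algo3:step2} runs in $O(n+m)$ time by Theorem~\ref{thm:algo-circRC} and Lemma~\ref{lem:algo-circRC}. Lines~\ref{algo3:step3}--\ref{algo3:step4} are $O(k')\subseteq O(n)$. The subtle point is bounding the cost of line~\ref{algo3:step8} and the bipartiteness test of line~\ref{algo3:cond3} in the $\overline{\MIast{k'}}$ case: here $G'$ has $2k'$ vertices but potentially $\Theta(k'^2)$ non-edges, so $\overline{G'}$ could be dense; however, one observes that $\MIast{k'}$ — and hence $\overline{\MIast{k'}}$ — has exactly two zeros per row, so each row $x_i$ of $M_{\rho_D,\sigma_D}=\overline{\MIast{k'}}$ has only two $1$'s, meaning $x_i$ is adjacent in $G$ (via the augmented matrix restricted to these columns) to all but two of $y_1,\dots,y_{k'}$; equivalently $\overline{G'}$ restricted to the bipartition $(X,Y)$ has bounded degree, and $X,Y$ are easily seen to be independent in $\overline{G'}$, so $\overline{G'}$ has $O(k')$ edges and can be built and tested for bipartiteness in $O(k')$ time, with a chordless odd cycle extracted in the same bound. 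Since $k'\le n$, the whole algorithm runs in $O(n+m)$ time, which completes the proof.
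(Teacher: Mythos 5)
There are two genuine gaps, both in the branch $k'\geq 7$. First, your running-time argument for the case $M_{\rho_D,\sigma_D}=\overline{\MIast{k'}}$ rests on the claim that $X=\{x_1,\dots,x_{k'}\}$ and $Y=\{y_1,\dots,y_{k'}\}$ are ``easily seen to be independent in $\overline{G'}$,'' so that $\overline{G'}$ has only $O(k')$ edges. This is false: the configuration $M(G)_{\rho,\sigma}=\overline{\MI{k'}}$ constrains only the entries indexed by $X\times Y$ and says nothing about adjacencies inside $X$ or inside $Y$. If $X$ and $Y$ were automatically cliques of $G$, then $\overline{G'}$ would always be bipartite and line~\ref{algo3:step10} would be dead code --- but it is not: that output is exactly what feeds the chordless odd cycle into Lemma~\ref{lem:main} in the proof of Corollary~\ref{cor:algo-concave-round}. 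The correct time bound goes the other way around: since $\overline{\MI{k'}}$ has $k'(k'-2)$ ones and each one witnesses either an edge of $G'$ or a coincidence $x_i=y_j$, the graph $G'$ is \emph{dense}, i.e.\ $(n')^2\in O(m')$ where $n',m'$ are its numbers of vertices and edges. Hence one may afford to build $\overline{G'}$ explicitly and test bipartiteness (and extract a chordless odd cycle) in $O((n')^2)\subseteq O(m')\subseteq O(m)$ time, even though $\overline{G'}$ may have $\Theta((k')^2)$ edges.

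Second, your treatment of the sub-case $M_{\rho_D,\sigma_D}=\MIast{k'}$ with $k'\geq 7$ is not sound. The rows $x_1,x_2,x_4$ and columns $y_3,y_4,y_1$ yield the configuration $\MIast{k'}_{\langle 1,2,4\rangle,\langle 3,4,1\rangle}=\overline{\MI 3}$ (not $\MIast 3$), and $\overline{\MI 3}$ is \emph{not} a forbidden submatrix for the circular-ones property --- it has one $1$ per row. Consequently your assertion that $G'=G[\{x_1,x_2,x_4,y_1,y_3,y_4\}]$ ``is not concave-round'' is unjustified and in fact can fail (e.g.\ if the only adjacencies among these six vertices form a perfect matching, $G'$ is a proper interval graph and the algorithm correctly falls through to line~\ref{algo3:step10}, since $\overline{G'}$ then contains triangles). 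The correctness of line~\ref{algo3:step9} in this sub-case requires Lemma~\ref{lem:co--C_2k} applied to the configuration $\overline{\MI 3}$ \emph{together with} the bipartiteness of $\overline{G'}$ verified at line~\ref{algo3:cond3}, which yields $G'\cong\overline{C_6}$ --- note also that line~\ref{algo3:step9} returns $G'$ itself, so you must show $G'$ \emph{is} a minimal forbidden induced subgraph, not merely that it contains one. The rest of your argument (the reduction to Algorithm~\ref{algo:2}, the finite check in the branch $k'\leq 6$, and the use of Lemma~\ref{lem:co--C_2k} in the $\overline{\MIast{k'}}$ sub-case) matches the paper's proof.
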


\begin{proof} We first prove the correctness. Outputs given in lines~\ref{algo3:step6} and \ref{algo3:step10} are correct because, in either case, $G'$ is an induced subgraph of $G$. In only remains to prove that the output given in line~\ref{algo3:step9} is also correct. Hence, we assume, without loss of generality, that the condition of line~\ref{algo3:cond1} does not hold (i.e., $k'\geq 7$ holds) and the condition of line~\ref{algo3:cond3} holds. Suppose first that the condition of line~\ref{algo3:cond2} holds and let $G'$ be defined as in line~\ref{algo3:step7}. In this case, the correctness of the output given in line~\ref{algo3:step9} follows from the fact that if $\rho=\langle x_1,x_2,x_4\rangle$ and $\sigma=\langle y_3,y_4,y_1\rangle$, then $M(G)_{\rho,\sigma}=M(G)_{\langle 1,2,4\rangle\circ\rho_D,\langle 3,4,1\rangle\circ\sigma_D}=\MIast{k'}_{\langle 1,2,4\rangle,\langle 3,4,1\rangle}=\overline{\MI 3}$ and, by virtue of Lemma~\ref{lem:co--C_2k}, $G'$ is isomorphic to $\overline{C_6}$. It only remains to consider the case where the condition of line~\ref{algo3:cond2} does not hold and, consequently, $G'$ is as specified in line~\ref{algo3:step8}. As $M_{\rho_D,\sigma_D}\in\ForbRowCol\cup\{\MIast 3\trans,\overline{\MIast 3\trans}\}$ and none of the conditions of lines~\ref{algo3:cond1} and~\ref{algo3:cond2} holds, necessarily $M_{\rho_D,\sigma_D}=\overline{\MIast{k'}}$. Hence, if $\rho=\langle x_1,\ldots,x_{k'}\rangle$ and $\sigma=\langle y_1,\ldots,y_{k'}\rangle$, then $M(G)_{\rho,\sigma}=\overline{\MI{k'}}$ and, by virtue of Lemma~\ref{lem:co--C_2k}, $G'$ is isomorphic to $\overline{C_{2k'}}$. This completes the proof of the correctness.

As we are working with sparse representations, line~\ref{algo3:step1} can be performed in $O(n+m)$ time. Since $\size(M)\in O(n+m)$, line~\ref{algo3:step2} can completed in $O(n+m)$ time by applying Algorithm~\ref{algo:2}. Lines~\ref{algo3:step2b} to~\ref{algo3:step4} can be performed in $O(n)$ time. If the condition of line~\ref{algo3:cond1} holds, then $\ell'\leq 7$ (because $M_{\rho_D,\sigma_D}\in\ForbRowCol\cup\{\MIast 3\trans,\overline{\MIast 3\trans}\}$) and, consequently, line~\ref{algo3:step6} can be carried out in $O(1)$ time because $G'$, as defined in line~\ref{algo3:step5}, has at most $13$ vertices. Hence, we assume, without loss of generality, that the condition of line~\ref{algo3:cond1} does not hold. Lines~\ref{algo3:cond2} to~\ref{algo3:step8} can be completed in $O(n+m)$ time. It only remains to show that lines~\ref{algo3:cond3} to \ref{algo3:step10} can be performed in $O(n+m)$ time. If the condition of line~\ref{algo3:cond2} holds, then $G'$ has six vertices and lines~\ref{algo3:cond3} to \ref{algo3:step10} can be completed in $O(1)$ time. Therefore, we assume further, without loss of generality, that the condition of line~\ref{algo3:cond2} does not hold. As seen in the preceding paragraph, if $\rho=\langle x_1,\ldots,x_{k'}\rangle$ and $\sigma=\langle y_1,\ldots,y_{k'}\rangle$, then $M(G)_{\rho,\sigma}=\overline{\MI{k'}}$. Thus, if $n'$ and $m'$ are the number of vertices and edges, respectively, of $G'$, then $n'\in O(k')$ and $(k')^2\in O(m')$. Therefore, $(n')^2\in O(m')$, which means that $O(m')$ time suffices to compute $\overline{G'}$, decide whether or not the condition of line~\ref{algo3:cond3} holds and, if not, find the chordless odd cycle required in line~\ref{algo3:step10}. Since $m'\leq m$, lines~\ref{algo3:cond3} and~\ref{algo3:step10} can be performed in $O(n+m)$ time. This completes the proof that Algorithm~\ref{algo:3} can be implemented to run in linear time.\end{proof}

We are now ready to prove Corollary~\ref{cor:algo-concave-round}.

\begin{proof}[Proof of Corollary~\ref{cor:algo-concave-round}] Let $G$ be a graph which is not concave-round. We apply Algorithm~\ref{algo:3} to $G$. If Algorithm~\ref{algo:3} produces an induced subgraph of $G$ which is a minimal forbidden induced subgraph for the class of concave-round graphs, we are done. Hence, we assume, without loss of generality, that Algorithm~\ref{algo:3} produces a chordless odd cycle $C$ in $\overline G$. We now apply the algorithm of Theorem~\ref{thm:algo-pca} to $G$ to produce an induced subgraph $F$ of $G$ which is a minimal forbidden induced subgraph for the class of proper circular-arc graphs. If $F\neq\overline{H_2}$ and $F\neq\overline{H_4}$, then $F$ is a minimal forbidden induced subgraph for the class of concave-round graphs (as seen in the proof of Theorem~\ref{thm:main}) and we are done. Therefore, we assume, without loss of generality, that $F=\overline{H_2}$ or $F=\overline{H_4}$. Applying Lemma~\ref{lem:main} to $G$, $F$, and $C$, an induced subgraph of $G$ which is a minimal forbidden induced subgraph of the class of concave-round graphs can be found in $O(n+m)$ time. This completes the proof of the corollary.\end{proof}

\section{Final remarks}\label{sec:final}

In this section, we point out some connections to other circular-arc graphs. A \emph{normal circular-arc graph}~\cite{MR2368825} is a circular-arc graph admitting a circular-arc model with no two arcs covering the circle (i.e., there are no two arcs whose union is the entire circle). Golumbic~\cite{MR562306} proved that every proper circular-arc graph admits such a model.

\begin{thm}[{\cite[p.~191]{MR562306}}]\label{thm:Golumbic} Every proper circular-arc graph is a normal circular-arc graph.\end{thm}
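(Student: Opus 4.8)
The plan is to build the required model by improving an arbitrary proper circular-arc model of $G$; I may first perturb it slightly so that all $2n$ endpoints are distinct, which changes neither the intersection graph nor properness. If the arcs do not together cover the whole circle, then no two of them can cover it either, so the model is already normal and we are done. Hence I may assume the arcs cover the circle. Among all proper circular-arc models of $G$, I fix one, say $\{A_v\}_{v\in V(G)}$, that minimizes the number of \emph{bad pairs} --- unordered pairs $\{u,v\}$ with $A_u\cup A_v$ equal to the whole circle --- and I assume for a contradiction that this minimum is positive. Fixing a bad pair $\{u,v\}$, the goal is to exhibit a proper model of $G$ with strictly fewer bad pairs.

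The second step is an analysis of the local geometry around $\{u,v\}$. Since $A_u\cup A_v$ is the whole circle, the open complements of $A_u$ and of $A_v$ are disjoint open arcs, so $A_u\cap A_v$ is the union of two disjoint closed arcs $O_1,O_2$, while $A_u\setminus A_v$ is an open arc lying in the interior of $A_u$ (its closure consists of this arc together with two endpoints of $A_v$, which are interior to $A_u$), and symmetrically for $A_v\setminus A_u$. The crucial consequence is that shrinking just one of $A_u,A_v$ from an endpoint can never uncover a point of its own ``exclusive'' arc, because that arc is interior; so any attempt to kill the bad pair $\{u,v\}$ must uncover a point of $O_1$ or of $O_2$, and to uncover a point of $O_1$ one must shrink \emph{both} $A_u$ and $A_v$, each from the endpoint it contributes to $O_1$. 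I would also record that, by properness, no arc is contained in $O_1$ or $O_2$, so the other arcs that ``hang'' into $O_1$ from each of its two sides are organised into a staircase controlling how far $A_u$ and $A_v$ may be shrunk.

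The heart of the argument is the surgery. Let $c$ be the farthest point of $O_1$ to which the $O_1$-endpoint of $A_u$ may be pushed without losing an edge and without creating an inclusion among arcs (equivalently, the nearest clockwise extent inside $O_1$ of an arc hanging into $O_1$ on the $A_u$-side, or the far endpoint of $O_1$ if there is none), and let $d$ be the analogous point for $A_v$ on the other side. Shrinking $A_u$ and $A_v$ to end at $c$ and $d$ respectively produces a proper model representing the same graph, and it creates no \emph{new} bad pair, since a union of two arcs that failed to cover the circle cannot come to cover it once one of the arcs is made smaller. If $d$ lies strictly on the $A_u$-side of $c$ within $O_1$, the new arcs leave the sub-arc of $O_1$ between them uncovered, so $\{u,v\}$ is no longer bad and the count strictly drops, contradicting minimality; if this fails on $O_1$, I run the symmetric surgery on $O_2$.

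The main obstacle is the ``locked'' case, in which the surgery fails on both $O_1$ and $O_2$: then one finds, on each overlap arc, a pair of further arcs pinning $A_u$ and $A_v$ from the two sides with interleaving extents. This is precisely where properness must be used. I would try to show that such a pinning arc, being forbidden to lie inside $A_u$ (or $A_v$), is forced either to contain an entire exclusive arc --- which produces a new bad pair whose two overlap arcs are strictly smaller, so that choosing $\{u,v\}$ with inclusion-minimal overlap arcs excludes the locked case --- or else, together with $A_u$, $A_v$ and the other pinning arcs, to realise a configuration already containing one of Tucker's forbidden induced subgraphs for proper circular-arc graphs (Theorem~\ref{thm:pca}), contradicting the hypothesis that $G$ is a proper circular-arc graph. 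Proving that one of these alternatives always holds, and checking in the first that re-choosing $\{u,v\}$ after the refined surgery terminates, is where the real work lies; everything else is the routine bookkeeping of the minimality argument, which stays clean because shrinking arcs only ever destroys bad pairs.
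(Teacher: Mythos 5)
The paper does not prove Theorem~\ref{thm:Golumbic} at all: it is quoted from Golumbic's book (p.~191), so there is no internal proof to compare against. Judged on its own terms, your proposal is an extremal/surgery argument of a reasonable and standard shape (minimize the number of pairs of arcs covering the circle, then shrink two arcs of a bad pair into one component of their intersection), and the bookkeeping you do carry out is sound: the complements of $A_u$ and $A_v$ are indeed disjoint open arcs, so $A_u\cap A_v$ has two components, and shrinking arcs can only destroy, never create, bad pairs or edges.

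The genuine gap is the one you yourself flag: the ``locked'' case, where on both components $O_1$ and $O_2$ the admissible shrinking ranges of $A_u$ and $A_v$ fail to overlap, is left entirely unresolved, and that case is the whole content of the theorem --- a priori it could be that \emph{every} proper model of some proper circular-arc graph has two arcs covering the circle, and ruling this out is exactly the page of combinatorial work in Golumbic's proof. You offer a dichotomy (either a pinning arc yields a ``smaller'' bad pair, enabling a descent, or the pinning configuration contains one of the obstructions of Theorem~\ref{thm:pca}), but you neither prove that one of the two alternatives must occur, nor identify which forbidden subgraph arises, nor verify that the proposed descent on bad pairs terminates (the overlap components of distinct bad pairs are not naturally comparable, so ``inclusion-minimal overlap arcs'' needs a well-founded ordering that you have not supplied). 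A secondary, fixable inaccuracy: it is not true that killing the bad pair forces uncovering a point of $O_1\cup O_2$; one could instead shrink $A_u$ alone past the whole of $O_1$ into its exclusive region, and this alternative move should either be incorporated or explicitly excluded. As it stands the proposal is a plausible plan, not a proof.
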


Recall that concave-round graphs form a superclass of the class of proper circular-arc graphs. We observe, by combining several results in the literature, that, in fact, all concave-round graphs are normal circular-arc graphs. We rely on the results below. For our purposes, it is not necessary to give the definition of interval bigraphs; the interested reader is referred to~\cite{MR1475826}.

\begin{thm}[\cite{MR1475826}]\label{thm:Muller} Every biconvex graph is an interval bigraph.\end{thm}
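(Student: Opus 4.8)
The plan is to extract from biconvexity a convex enumeration of one part of the bipartition and then write down an interval-bigraph model essentially for free; in fact I would prove the formally stronger assertion that a bipartite graph is an interval bigraph whenever one of its parts admits an ordering in which every vertex of the other part has a neighbourhood that is an interval (``convex on one side''), of which the biconvex case is a special instance.

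First I would fix a bipartition $(X,Y)$ of the biconvex graph $G$, with $q=\vert Y\vert$. By definition there is a linear ordering of $V(G)$ in which the neighbourhood of every vertex is an interval; restricting this ordering to $Y$ yields an enumeration $y_1,\ldots,y_q$ of $Y$ such that, for every $x\in X$, the set $S_x=\{j\in[q]\colon x\text{ is adjacent to }y_j\}$ consists of consecutive integers (possibly $S_x=\emptyset$), since contiguity of a block is preserved under restricting the ordering.

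Next I would produce the intervals. Recall that $G$ with bipartition $(X,Y)$ is an interval bigraph if each vertex $v$ can be assigned a closed real interval $I_v$ such that, for all $x\in X$ and $y\in Y$, $xy\in E(G)$ if and only if $I_x\cap I_y\neq\emptyset$, intersections among the intervals assigned to a single part being irrelevant. I would set $I_{y_j}:=[j,j]$ for each $j\in[q]$, $I_x:=[\min S_x,\max S_x]$ for each $x\in X$ with $S_x\neq\emptyset$, and $I_x:=[-1,-1]$ for each $x\in X$ with $S_x=\emptyset$. Then, for $x\in X$ and $j\in[q]$, the intervals $I_x$ and $I_{y_j}$ meet if and only if $\min S_x\leq j\leq\max S_x$, i.e.\ if and only if $j\in S_x$, i.e.\ if and only if $x$ is adjacent to $y_j$; and a vertex $x$ with $S_x=\emptyset$ meets no $I_{y_j}$, as required. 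Hence $\{I_v\colon v\in V(G)\}$ realizes $G$ as an interval bigraph.

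There is no real combinatorial obstacle here; the only thing needing care is reconciling the conventions in the two definitions. One should confirm that degenerate (single-point) intervals are admitted by the interval-bigraph model in use---if not, replace each $[j,j]$ by $[j-\tfrac14,\,j+\tfrac14]$, which changes no incidence---and that the model constrains only $X$--$Y$ intersections, so that the (irrelevant) overlaps among the $I_x$, or among the $I_{y_j}$, do no harm. In short, the whole content is the observation that a one-sided convex ordering already is an interval-bigraph model once the convexly-ordered part is shrunk to points, and that every biconvex graph is in particular convex on one side.
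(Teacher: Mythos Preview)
Your argument is correct. The construction---place each $y_j$ at the point $j$ and stretch each $x$ over the interval spanned by its (consecutive) neighbours---is the standard way to see that a bipartite graph which is convex on one side is already an interval bigraph, and biconvexity is indeed more than enough. The only cosmetic point is the one you already flag: if one insists on nondegenerate intervals, thicken each $[j,j]$ slightly.

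As for comparison with the paper: there is nothing to compare. The paper does not prove Theorem~\ref{thm:Muller}; it merely quotes it from M\"uller~\cite{MR1475826} and uses it as a black box in the proof of Corollary~\ref{cor:Golumbic}. Your self-contained argument is therefore a genuine addition rather than a reworking of anything in the text, and it has the bonus of making explicit that only one-sided convexity is needed.
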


\begin{thm}[\cite{MR2071482}]\label{thm:HellHuang} The complement of an interval bigraph is a normal circular-arc graph.\end{thm}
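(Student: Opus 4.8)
The plan is to prove the statement by an explicit construction. Given an interval bigraph $B$ with parts $X$ and $Y$ and an interval representation $\{I_v=[l_v,r_v]:v\in X\cup Y\}$ (so that, for $x\in X$ and $y\in Y$, $xy\in E(B)$ if and only if $I_x\cap I_y\neq\emptyset$), I would build a circular-arc model of $H=\overline B$ in which no two arcs cover the circle. First I would record the elementary translation: in $H$ the sets $X$ and $Y$ are cliques, while for $x\in X$, $y\in Y$ one has $xy\in E(H)$ if and only if $I_x\cap I_y=\emptyset$, that is, if and only if $r_x<l_y$ or $r_y<l_x$. After a harmless perturbation I may assume that all $2(|X|+|Y|)$ endpoints are distinct and lie in the open interval $(0,1)$, so that every comparison of endpoints is strict and no boundary case arises.

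The construction uses two distinct points $N$ and $S$ on the circle, which split it into two open \emph{sides}, each parametrized by $[0,1]$ running from $N$ (coordinate $0$) to $S$ (coordinate $1$); call them the right side $R$ and the left side $L$. Every arc coming from $X$ will contain $N$ and every arc coming from $Y$ will contain $S$, which immediately makes $X$ and $Y$ cliques in the model. For $x\in X$ I set $A_x$ to be the arc through $N$ that reaches down $R$ to coordinate $1-r_x$ and down $L$ to coordinate $l_x$; for $y\in Y$ I set $A_y$ to be the arc through $S$ that reaches up $R$ to coordinate $1-l_y$ and up $L$ to coordinate $r_y$. The point of this choice is that on the right side $A_x$ and $A_y$ meet exactly when $1-r_x\ge 1-l_y$, i.e.\ when $r_x<l_y$, while on the left side they meet exactly when $l_x\ge r_y$, i.e.\ when $r_y<l_x$. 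Hence $A_x\cap A_y\neq\emptyset$ if and only if $I_x\cap I_y=\emptyset$, and together with the clique conditions this shows that $\{A_v\}$ is a circular-arc model of $H$.

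It then remains to verify normality, i.e.\ that no two arcs cover the whole circle. Two arcs from $X$ both avoid $S$ (their reaches $1-r_x$ and $l_x$ are strictly below $1$), so their union misses $S$; symmetrically two arcs from $Y$ miss $N$. For a cross pair $A_x,A_y$ the poles are already covered ($N\in A_x$, $S\in A_y$), so the union covers the circle only if it covers both sides entirely; covering $R$ forces $1-l_y\le 1-r_x$, i.e.\ $r_x\le l_y$, and covering $L$ forces $r_y\le l_x$. But $r_x\le l_y$ and $r_y\le l_x$, together with $l_x\le r_x$ and $l_y\le r_y$, give the cyclic chain $r_x\le l_y\le r_y\le l_x\le r_x$, which is impossible once the endpoints are distinct. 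Thus no cross pair covers the circle either, and the model is normal.

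The creative step, and the only real obstacle, is arranging the two sides so that the two ways two intervals can be disjoint ($x$ entirely to the left of $y$, or $y$ entirely to the left of $x$) are recorded on the two separate sides with the correct polarity; once that is set up, both the correctness of the model and its normality reduce to the single elementary observation that the disjointness inequalities $r_x<l_y$ and $r_y<l_x$ are mutually exclusive. I would finally remark that an isolated vertex or a non-unique bipartition of $B$ causes no trouble, since the construction only uses one fixed bipartition $(X,Y)$.
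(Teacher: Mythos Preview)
Your construction is correct. The key verifications---intersection of $A_x$ and $A_y$ on each side, the clique behaviour on $X$ and $Y$, and the normality argument via the impossible chain $r_x\le l_y\le r_y\le l_x\le r_x$---all go through exactly as you describe once the endpoints are perturbed to be distinct.

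However, there is nothing to compare against: the paper does not prove this statement. Theorem~\ref{thm:HellHuang} is quoted from Hell and Huang~\cite{MR2071482} as an external result and used only as a black box in the proof of Corollary~\ref{cor:Golumbic}. Your argument is in fact essentially the construction from that original paper: place the $X$-arcs through one pole and the $Y$-arcs through the other, and encode the two disjointness inequalities $r_x<l_y$ and $r_y<l_x$ on the two sides separately. So your proof is correct and, unsurprisingly, matches the cited source rather than anything the present paper contributes.
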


By combining the above results with Theorems~\ref{thm:concave-cobip} and~\ref{thm:cr-cob->pca}, the desired conclusion follows.

\begin{cor}\label{cor:Golumbic} Every concave-round graph is a normal circular-arc graph.\end{cor}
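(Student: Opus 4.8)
The plan is to split on whether the given concave-round graph $G$ is co-bipartite, treating the two cases by two different chains of results already assembled in this section.

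First I would handle the case in which $G$ is \emph{not} co-bipartite. Then Theorem~\ref{thm:cr-cob->pca} immediately gives that $G$ is a proper circular-arc graph, and Theorem~\ref{thm:Golumbic} (Golumbic) then yields that $G$ is a normal circular-arc graph. So nothing further is needed in this case.

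Next I would handle the case in which $G$ \emph{is} co-bipartite. Here the key is to move to the complement: since $G$ is concave-round and co-bipartite, the equivalence between assertions~\eqref{it:cobip1} and~\eqref{it:cobip2} in Theorem~\ref{thm:concave-cobip} shows that $G$ is co-biconvex, i.e.\ $\overline G$ is biconvex. By Theorem~\ref{thm:Muller} (M\"uller), $\overline G$ is then an interval bigraph, and by Theorem~\ref{thm:HellHuang} (Hell--Huang) the complement of an interval bigraph is a normal circular-arc graph; since $G=\overline{\overline G}$, this gives that $G$ is a normal circular-arc graph. Combining the two cases completes the proof.

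I do not expect any real obstacle: the corollary is a short assembly of the cited theorems, and the only thing to be careful about is the case distinction itself — Theorem~\ref{thm:concave-cobip} is only available when $G$ is co-bipartite, while Theorem~\ref{thm:cr-cob->pca} is exactly what covers the complementary case, so the two hypotheses dovetail precisely. (Note that there is some overlap — e.g.\ complete graphs are both co-bipartite and proper circular-arc — but the cases are exhaustive, which is all that is required.)
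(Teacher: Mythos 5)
Your proof is correct and uses exactly the same chain of results as the paper (Theorems~\ref{thm:Golumbic}, \ref{thm:concave-cobip}, \ref{thm:cr-cob->pca}, \ref{thm:Muller}, and~\ref{thm:HellHuang}); the only cosmetic difference is that you split on whether $G$ is co-bipartite, whereas the paper splits on whether $G$ is a proper circular-arc graph and then invokes Theorem~\ref{thm:cr-cob->pca} in contrapositive form to reach the co-bipartite case.
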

\begin{proof} Let $G$ be a concave-round graph. If $G$ is a proper circular-arc graph, then $G$ is a normal circular-arc graph by virtue of Theorem~\ref{thm:Golumbic}. Hence, we assume, without loss of generality, that $G$ is not a proper circular-arc graph. Thus, Theorems~\ref{thm:concave-cobip} and~\ref{thm:cr-cob->pca} imply that $G$ is a co-biconvex graph. Therefore, $\overline G$ is the complement of an interval bigraph by Theorem~\ref{thm:Muller}. Finally, Theorem~\ref{thm:HellHuang} implies that $G$ is a normal circular-arc graph.\end{proof}

Therefore, combining the above result with our Theorem~\ref{thm:main}, we obtain a characterization by minimal forbidden induced subgraphs of those normal circular-arc graphs which are concave-round graphs. A graph is \emph{quasi-line}~\cite{benrebea} if it is $\{\overline{C_{2k+1}^*}:k\geq 1\}$-free.

\begin{cor} A graph is concave-round if and only if it is a quasi-line and $\{\mbox{net},\overline{H_3},\overline{\BII{1}},\overline{\BII{2}},\linebreak\overline{\BIII{1}},\overline{\BIII{2}},\overline{\BIII{3}}\}$-free normal circular-arc graph.\end{cor}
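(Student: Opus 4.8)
The plan is to deduce the equivalence directly from Theorem~\ref{thm:main} (the minimal forbidden induced subgraph characterization of concave-round graphs), Corollary~\ref{cor:Golumbic} (every concave-round graph is a normal circular-arc graph), and the definition of quasi-line graphs as those that are $\{\overline{C_{2k+1}^*}:k\ge1\}$-free. For the forward implication this requires no real work: if $G$ is concave-round, then Theorem~\ref{thm:main} shows at once that $G$ has no induced $\overline{C_{2k+1}^*}$ for any $k\ge1$ (so $G$ is quasi-line) and no induced net, $\overline{H_3}$, $\overline{\BII{1}}$, $\overline{\BII{2}}$, $\overline{\BIII{1}}$, $\overline{\BIII{2}}$, or $\overline{\BIII{3}}$, while Corollary~\ref{cor:Golumbic} gives that $G$ is a normal circular-arc graph.

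For the converse, suppose $G$ is a quasi-line, $\{\text{net},\overline{H_3},\overline{\BII{1}},\overline{\BII{2}},\overline{\BIII{1}},\overline{\BIII{2}},\overline{\BIII{3}}\}$-free normal circular-arc graph. By Theorem~\ref{thm:main} it suffices to show that $G$ contains no induced tent$^*$, no induced $C_k^*$ with $k\ge4$, and no induced $\overline{C_{2k}}$ with $k\ge3$; indeed, all the remaining forbidden induced subgraphs of Theorem~\ref{thm:main} are ruled out already, namely net, $\overline{H_3}$, $\overline{\BII{1}}$, $\overline{\BII{2}}$, $\overline{\BIII{1}}$, $\overline{\BIII{2}}$, $\overline{\BIII{3}}$ by hypothesis, and $\overline{C_{2k+1}^*}$ ($k\ge1$) because $G$ is quasi-line. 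Since the classes of circular-arc graphs and of normal circular-arc graphs are both hereditary, it is enough to verify that tent$^*$ and $C_k^*$ ($k\ge4$) are not circular-arc graphs and that $\overline{C_{2k}}$ ($k\ge3$) is not a normal circular-arc graph. For the first assertion I would use the elementary fact that, for any graph $H$, if $H^*$ (the graph obtained from $H$ by adding an isolated vertex) is a circular-arc graph then $H$ is an interval graph, because the arc representing the isolated vertex yields a point of the circle covered by no other arc, and cutting the circle there turns the remaining arcs into intervals; since neither the tent (the $3$-sun) nor $C_k$ with $k\ge4$ is an interval graph, neither tent$^*$ nor $C_k^*$ is a circular-arc graph.

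For $\overline{C_{2k}}$ with $k\ge3$ I would exploit that it is co-bipartite and invoke the Hell--Huang correspondence between interval bigraphs and co-bipartite circular-arc graphs in its sharper ``if and only if'' form (the converse of Theorem~\ref{thm:HellHuang}, established in~\cite{MR2071482}): a co-bipartite graph is a normal circular-arc graph if and only if its complement is an interval bigraph. Hence, were $\overline{C_{2k}}$ a normal circular-arc graph, its complement $C_{2k}$ would be an interval bigraph; but $C_{2k}$ with $k\ge3$ is a chordless cycle of length at least six, while interval bigraphs are chordal bipartite, so this is impossible. Assembling these observations, $G$ contains none of the forbidden induced subgraphs of Theorem~\ref{thm:main} and is therefore concave-round, which completes the proof.

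The only genuinely nontrivial step is the treatment of the infinite family $\overline{C_{2k}}$: the two remaining families tent$^*$ and $C_k^*$ collapse immediately under the ``non-interval graph plus an isolated vertex'' argument, whereas excluding $\overline{C_{2k}}$ uses the converse half of the interval-bigraph/normal-circular-arc-graph relationship, which is not among the results recalled in the excerpt (only the sufficiency direction is, namely Theorem~\ref{thm:HellHuang}). The plan would therefore cite that converse from~\cite{MR2071482}; alternatively, one could replace it by a self-contained argument that analyses a hypothetical circular-arc model of $\overline{C_{2k}}$ via the Helly property applied to the arcs of its two cliques $\{v_i:i\text{ odd}\}$ and $\{v_i:i\text{ even}\}$ and derives a contradiction with the required cross (non-)adjacencies. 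One should also double-check that the containment ``interval bigraphs $\subseteq$ chordal bipartite graphs'' (equivalently, that $C_{2k}$ with $k\ge3$ is not an interval bigraph) is invoked with the correct attribution.
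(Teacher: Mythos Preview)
Your proof is correct and follows essentially the same route as the paper's: both directions are read off from Theorem~\ref{thm:main} together with Corollary~\ref{cor:Golumbic}, the key observation for the `if' part being that among the minimal forbidden induced subgraphs for concave-round graphs, the ones listed in the corollary (plus the $\overline{C_{2k+1}^*}$ coming from the quasi-line hypothesis) are exactly those that are normal circular-arc graphs.

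The only difference is in how you dispose of $\overline{C_{2k}}$ for $k\geq 3$. You invoke the converse of Theorem~\ref{thm:HellHuang} (co-bipartite normal circular-arc graphs are exactly complements of interval bigraphs) together with the fact that long even cycles are not interval bigraphs. The paper instead asserts, without further argument, that $\overline{C_{2k}}$ is not a circular-arc graph at all---just like $\textup{tent}^*$ and $C_k^*$. This is a classical fact: for $k=3$ the graph $\overline{C_6}$ is the triangular prism, a standard minimal non-circular-arc graph, and the general case is covered by Tucker's structural results on co-bipartite circular-arc graphs in~\cite{MR0379298}. So your detour through the Hell--Huang converse and chordal bipartite graphs is unnecessary (though not wrong): you can simply treat all three remaining families uniformly as non-circular-arc graphs, and your own concern about importing an uncited result disappears.
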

\begin{proof} The `if' part follows from Theorem~\ref{thm:main} since net, $\overline{H_3}$, $\overline{\BII{1}}$, $\overline{\BII{2}}$, $\overline{\BIII{1}}$, $\overline{\BIII{2}}$, $\overline{\BIII{3}}$, and $\overline{C_{2k+1}^*}$ for each $k\geq 1$ are the minimal forbidden induced subgraphs for the class of concave-round graphs which are normal circular-arc graphs (while the remaining forbidden induced subgraphs are not circular-arc graphs). The `only if' part follows from Theorems~\ref{thm:concave-cobip} and~\ref{thm:cr-cob->pca} (as in the second paragraph of the proof of Theorem~\ref{thm:main}) and Corollary~\ref{cor:Golumbic}.\end{proof} 

A circular-arc model is \emph{Helly} if every nonempty subfamily of pairwise intersecting arcs has nonempty total intersection. A \emph{Helly circular-arc graph}~\cite{MR0376439} is a graph having a Helly circular-arc model. A \emph{clique-matrix} $Q(G)$ of a graph $G$ is the incidence matrix of the inclusion-wise maximal cliques versus vertices. The matrix $Q(G)$ is unique up to permutations of rows and of columns. Helly circular-arc graphs are characterized by the circular-ones property for columns of their clique-matrices.

\begin{thm}[\cite{MR0376439}]\label{thm:Gavril} A graph $G$ is a Helly circular-arc graph if and only if $Q(G)$ has the circular-ones property for columns.\end{thm}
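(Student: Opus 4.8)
The plan is to prove both implications by translating between Helly circular-arc models of $G$ and circular arrangements of the rows of the clique-matrix, following Gavril's original argument. Throughout, let $C_1,\dots,C_k$ be the inclusion-wise maximal cliques of $G$; these index the rows of $Q(G)$, the columns are indexed by $V(G)$, and every vertex lies in at least one $C_i$.

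For the `only if' part, I would start from a Helly circular-arc model $\{A_v : v\in V(G)\}$ of $G$ and assign to each maximal clique $C_i$ a point $p_i$ on the circle with $C_i=\{v : p_i\in A_v\}$: since $C_i$ is a clique, the arcs $\{A_v : v\in C_i\}$ pairwise intersect, so by the Helly condition they share a point $p_i$, and the set $\{v : p_i\in A_v\}$ is a clique containing $C_i$, hence equals $C_i$ by maximality. The points $p_1,\dots,p_k$ are pairwise distinct (if $p_i=p_j$ then $C_i=\{v : p_i\in A_v\}=\{v : p_j\in A_v\}=C_j$), so they inherit a cyclic order from the circle, and I would arrange the rows of $Q(G)$ in that cyclic order. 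For a fixed vertex $v$, the rows with a $1$ in column $v$ are exactly those $C_i$ with $p_i\in A_v$; since $A_v$ is an arc, the set $\{p_i : p_i\in A_v\}$ is a circularly consecutive subset of $\{p_1,\dots,p_k\}$, so the corresponding rows are circularly consecutive, and $Q(G)$ has the circular-ones property for columns.

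For the `if' part, suppose the rows of $Q(G)$ admit a circular arrangement so that for each $v$ the set $S_v$ of maximal cliques containing $v$ is circularly consecutive; reindex so the cyclic order is $C_1,\dots,C_k$ and fix distinct points $p_1,\dots,p_k$ in that cyclic order on a circle. I would build a model by letting $A_v$ be the closed arc containing exactly the clique points $\{p_i : C_i\in S_v\}$ together with a short stretch of each of the two neighbouring gaps, with both endpoints of $A_v$ placed strictly inside those gaps (never reaching the next clique point). The delicate point is how to interleave, within the gap between consecutive clique points $p_m$ and $p_{m+1}$, the right endpoints of arcs ending there (those $A_v$ with $v\in C_m$, $v\notin C_{m+1}$) and the left endpoints of arcs starting there (those $A_v$ with $v\in C_{m+1}$, $v\notin C_m$): I would place all such right endpoints before all such left endpoints. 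Then I would check that (a) $u\sim v$ implies $A_u\cap A_v\neq\emptyset$, since $u$ and $v$ lie in a common maximal clique $C_m$ and hence $p_m\in A_u\cap A_v$; (b) $u\not\sim v$ implies $A_u\cap A_v=\emptyset$, since then $S_u\cap S_v=\emptyset$, so $A_u$ and $A_v$ can overlap only in a gap that is a boundary gap of both, where the "right-endpoints-before-left-endpoints" rule forces the endpoint of the arc coming from the $S_u$-side to precede the one coming from the $S_v$-side (a short case split according to whether $S_u$ and $S_v$ are separated, or abut on one side, or abut on both sides and together exhaust the clique points); and (c) the model is Helly, because any pairwise-intersecting subfamily $\{A_v : v\in T\}$ has $T$ a clique by (b), hence $T\subseteq C_m$ for some $m$, and then $p_m\in\bigcap_{v\in T}A_v$. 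By (a) and (b) this model represents $G$, and by (c) it is Helly, so $G$ is a Helly circular-arc graph.

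The main obstacle is part (b) of the converse: making the endpoint placement precise enough that the constructed arcs have intersection graph exactly $G$, without accidentally creating intersections between arcs whose clique-point sets $S_u$, $S_v$ are disjoint but cyclically adjacent. Handling this cleanly is exactly what the uniform "right endpoints before left endpoints within each gap" convention is for, together with the small case analysis on the relative position of $S_u$ and $S_v$. The remaining edge cases (a vertex lying in every maximal clique, whose arc becomes the whole circle; an edgeless $G$, where $Q(G)$ is an identity matrix) are immediate and cause no difficulty.
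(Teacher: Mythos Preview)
The paper does not prove this statement: Theorem~\ref{thm:Gavril} is quoted from Gavril~\cite{MR0376439} as a known result and is used without proof, so there is no ``paper's own proof'' to compare against.

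Your outline is essentially Gavril's original argument and is sound. The forward direction is clean. In the converse, your ``right endpoints before left endpoints in each gap'' convention does exactly the right job, and your Helly verification via ``pairwise intersecting $\Rightarrow$ clique $\Rightarrow$ contained in some $C_m$ $\Rightarrow$ common point $p_m$'' is correct. Two small things worth tightening when you write it out in full: first, make explicit that among the right endpoints in a given gap (and likewise among the left endpoints) the relative order is irrelevant, since any two such arcs already meet at the adjacent clique point; second, in the edge case where $S_v$ consists of all maximal cliques (so $A_v$ is the full circle), note that then $v$ is universal in $G$, so the full-circle arc is consistent with the intersection graph, and the Helly argument still goes through because any clique containing $v$ sits inside some $C_m$ and $p_m\in A_v$.
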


Another consequence of our Theorem~\ref{thm:main} is the following.

\begin{cor} A Helly circular-arc graph is concave-round if and only if it is quasi-line.\end{cor}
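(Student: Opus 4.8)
The plan is to read off both directions from our characterization of concave-round graphs (Theorem~\ref{thm:main}), Gavril's characterization of Helly circular-arc graphs (Theorem~\ref{thm:Gavril}), and the fact that the class of Helly circular-arc graphs is hereditary. The `only if' direction is immediate and does not even use the Helly hypothesis: if $G$ is concave-round, then by Theorem~\ref{thm:main} it contains no induced $\overline{C_{2k+1}^\ast}$ with $k\geq1$; that is, $G$ is quasi-line.

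For the converse, let $G$ be a quasi-line Helly circular-arc graph; I would show that $G$ is concave-round by verifying that it contains none of the forbidden induced subgraphs listed in Theorem~\ref{thm:main}. The graphs $\overline{C_{2k+1}^\ast}$ ($k\geq1$) are excluded since $G$ is quasi-line. For every one of the remaining forbidden subgraphs --- net, $\mathrm{tent}^\ast$, $\overline{H_3}$, $\overline{\BII1}$, $\overline{\BII2}$, $\overline{\BIII1}$, $\overline{\BIII2}$, $\overline{\BIII3}$, $C_k^\ast$ with $k\geq4$, and $\overline{C_{2k}}$ with $k\geq3$ --- I would use that every subfamily of a Helly family of arcs is again Helly, so that an induced subgraph of a Helly circular-arc graph is again a Helly circular-arc graph; hence it suffices to check that none of these graphs is itself a Helly circular-arc graph, which then rules it out as an induced subgraph of $G$. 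For $\mathrm{tent}^\ast$, $C_k^\ast$ ($k\geq4$) and $\overline{C_{2k}}$ ($k\geq3$) this is immediate, because, as already observed in the proof of the preceding corollary, these graphs are not even circular-arc graphs.

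It then remains to handle the seven sporadic graphs net, $\overline{H_3}$, $\overline{\BII1}$, $\overline{\BII2}$, $\overline{\BIII1}$, $\overline{\BIII2}$, $\overline{\BIII3}$, each of which \emph{is} a circular-arc graph. For each of them I would invoke Theorem~\ref{thm:Gavril} and verify that its clique-matrix does not have the circular-ones property for columns (for instance through Theorem~\ref{thm:circR}, or by a direct argument on its maximal cliques), so the graph is not a Helly circular-arc graph. As an illustration, the net has exactly four inclusion-wise maximal cliques --- the central triangle $K_0$ and the three pendant edges $K_1$, $K_2$, $K_3$ --- and $K_0$ shares a vertex with each of $K_1$, $K_2$, $K_3$; hence any circular ordering of the rows of the clique-matrix witnessing the circular-ones property for columns would have to place $K_0$ consecutively with each of $K_1$, $K_2$ and $K_3$, which is impossible because $K_0$ has only two neighbours in a circular arrangement of four elements. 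The remaining six graphs are disposed of by the same type of short, finite check. I expect this last verification --- determining the maximal cliques of the seven sporadic graphs and confirming in each case that no circular ordering of the rows of the clique-matrix works --- to be the only point requiring genuine care; the rest of the argument is bookkeeping against the list in Theorem~\ref{thm:main}.
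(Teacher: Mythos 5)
Your proposal is correct and follows essentially the same route as the paper, whose proof is the single observation that among the minimal forbidden induced subgraphs of Theorem~\ref{thm:main} only the graphs $\overline{C_{2k+1}^\ast}$ are Helly circular-arc graphs; you simply make explicit the supporting verifications (hereditariness of the Helly circular-arc class, the non-circular-arc graphs already identified in the preceding corollary, and the clique-matrix check via Theorem~\ref{thm:Gavril} for the remaining sporadic graphs, illustrated correctly for the net). The only difference is one of detail, not of method.
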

\begin{proof} It follows from Theorem~\ref{thm:main} because the only minimal forbidden induced subgraphs for the class of concave-round graph which are Helly circular-arc graphs are $\overline{C_{2k+1}^\ast}$ for each $k\geq 1$.\end{proof}

The above corollary will be useful on a subsequent work where, with a different approach, we will give a characterization of the intersection of the classes of concave-round graphs and Helly circular-arc graphs by minimal forbidden induced subgraphs.

Graphs whose clique-matrices have the consecutive-ones property for rows, for columns, or for rows and columns were characterized in the literature as follows. An \emph{interval graph} is the intersection graph of a set of intervals on a line; the set of intervals is called an \emph{interval model} of the graph. These graphs are precisely those whose clique-matrices have the consecutive-ones property for columns.

\begin{thm}[\cite{MR0190028,MR0175811}]\label{thm:FulkersonGross} A graph $G$ is an interval graph if and only if $Q(G)$ has the consecutive-ones property for columns.\end{thm}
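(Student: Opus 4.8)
The plan is to prove both implications directly from the definition of an interval model, relying only on the Helly property of intervals on a line and on the maximality of the cliques indexing the rows of $Q(G)$ (whose rows are the inclusion-wise maximal cliques and whose columns are the vertices).

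For the `only if' direction, I would start from an interval model $\{I_v\}_{v\in V(G)}$ of $G$ and use the fact that every clique of $G$ has a point common to all of its intervals (the Helly property for intervals on a line). In particular, for each inclusion-wise maximal clique $C$ one can fix a \emph{clique point} $p_C$ with $p_C\in I_v$ for all $v\in C$. I would then order the rows of $Q(G)$ by the left-to-right order of their clique points and argue that, for every fixed vertex $v$, the rows containing $v$ occur consecutively in this order. Indeed, if $C_1,C_2,C_3$ are maximal cliques with $p_{C_1}<p_{C_2}<p_{C_3}$ and $v\in C_1\cap C_3$, then $p_{C_2}$ lies in $I_v$ (since $I_v$ is an interval containing both $p_{C_1}$ and $p_{C_3}$); hence $v$ is adjacent to, or equal to, every vertex of $C_2$, so $C_2\cup\{v\}$ is a clique, and the maximality of $C_2$ forces $v\in C_2$. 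This establishes the consecutive-ones property for the columns of $Q(G)$.

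For the `if' direction, suppose $Q(G)$ has the consecutive-ones property for columns, witnessed by an ordering $C_1,C_2,\ldots,C_t$ of the maximal cliques of $G$. For each vertex $v$, let $[\ell_v,r_v]$ be the block of indices $j\in[t]$ with $v\in C_j$, and set $I_v=[\ell_v,r_v]$. I would then verify that $\{I_v\}_{v\in V(G)}$ is an interval model of $G$. On the one hand, if $uv\in E(G)$, then $\{u,v\}$ is a clique and is therefore contained in some maximal clique $C_j$, so $j\in[\ell_u,r_u]\cap[\ell_v,r_v]$ and $I_u\cap I_v\neq\emptyset$. On the other hand, if $I_u\cap I_v\neq\emptyset$, pick $j\in[\ell_u,r_u]\cap[\ell_v,r_v]$; then $u,v\in C_j$, and since $C_j$ is a clique the two vertices are adjacent, so $uv\in E(G)$.

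The only delicate step is the `only if' direction, and more precisely the combination of the Helly property with maximality that shows the set of maximal cliques through a fixed vertex is an interval in the clique-point order; the rest of the argument is a routine unwinding of definitions.
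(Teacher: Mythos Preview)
Your argument is correct and is essentially the classical Fulkerson--Gross proof: pick a Helly point for each maximal clique, order the cliques by these points, and use maximality to force consecutiveness; conversely, read an interval model off a consecutive ordering of the cliques. Note, however, that the paper does not supply its own proof of this theorem---it is quoted as a known result with references to \cite{MR0190028,MR0175811}---so there is nothing in the paper to compare against beyond observing that your write-up matches the standard proof in those sources.
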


A \emph{proper interval graph}~\cite{MR0252267} is the intersection graph of a set of intervals in a line such that no two of them are one a proper subset of the other. Proper interval graphs are also characterized by the consecutive-ones property of their clique-matrices as follows.

\begin{thm}[\cite{MR776781,MR593976,MR0252267}]\label{thm:PIG} For each graph $G$, the following assertions are equivalent:
\begin{enumerate}[(i)]
\item\label{it1:PIG} $G$ is a proper interval graph;
\item\label{it2:PIG} $Q(G)$ has the consecutive-ones property for rows and columns;
\item\label{it3:PIG} $Q(G)$ has the consecutive-ones property for rows. 
\end{enumerate}\end{thm}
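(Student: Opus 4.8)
The plan is to prove the cyclic chain of implications (i)$\Rightarrow$(ii)$\Rightarrow$(iii)$\Rightarrow$(i), so that all three assertions become equivalent. The step (ii)$\Rightarrow$(iii) is immediate, since the consecutive-ones property for rows and columns entails the consecutive-ones property for rows.

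For (iii)$\Rightarrow$(i), I would take a linear ordering $v_1,\ldots,v_n$ of the columns of $Q(G)$ (that is, of $V(G)$) witnessing the consecutive-ones property for rows, so that for each maximal clique $K$ of $G$ (each row of $Q(G)$) the set $\{\,i : v_i\in K\,\}$ is an interval $[\ell_K,r_K]$ of $[n]$. The key observation is that each closed neighborhood $N_G[v_i]$ is precisely the union of the maximal cliques containing $v_i$; since each such clique is an interval of the ordering whose range contains the position of $v_i$, their union is again an interval of the ordering. Hence the augmented adjacency matrix of $G$, with both rows and columns indexed by $v_1,\ldots,v_n$, has all the $1$'s of each row consecutive, i.e., it has the consecutive-ones property; the characterization of proper interval graphs by augmented adjacency matrices recalled in the introduction (Roberts~\cite{MR0252267}) then yields that $G$ is a proper interval graph.

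For (i)$\Rightarrow$(ii), I would start from a proper interval model $\{[a_v,b_v]\}_{v\in V(G)}$ of $G$, which may be taken (by a routine normalization) to have its $2n$ endpoints pairwise distinct; then properness is equivalent to $a_u<a_v\iff b_u<b_v$ for all $u\neq v$, and I would order the vertices as $v_1,\ldots,v_n$ by increasing left endpoint, which is the same as ordering them by increasing right endpoint. Using that pairwise-intersecting intervals of the line share a common point, I would argue that every maximal clique $K$ equals the set $\{\,v : p\in[a_v,b_v]\,\}$ for every point $p$ of the nondegenerate interval $\bigl[\max_{v\in K}a_v,\min_{v\in K}b_v\bigr]$, that distinct maximal cliques correspond to disjoint such intervals, and hence that one can choose a point $p_K$ for each maximal clique $K$ so that the points $p_K$ are pairwise distinct and each $K$ is the clique of intervals through $p_K$ (a \emph{clique point} for $K$); list the maximal cliques as $K_1,\ldots,K_t$ so that $p_{K_1}<\cdots<p_{K_t}$. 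Then each maximal clique $K$, regarded as a set of vertices, is $\{\,i : a_{v_i}\leq p_K\,\}\cap\{\,i : b_{v_i}\geq p_K\,\}$, the intersection of an initial segment with a final segment of the vertex ordering, hence an interval of it; this gives the consecutive-ones property for rows. Dually, each vertex $v$ equals $\{\,s : a_v\leq p_{K_s}\leq b_v\,\}$, an interval of the ordering $K_1,\ldots,K_t$; this gives the consecutive-ones property for columns. Together these give the consecutive-ones property for rows and columns, as required.

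I expect the main obstacle to be the bookkeeping in (i)$\Rightarrow$(ii) that sets up the correspondence between the maximal cliques of $G$ and the clique points of the interval model; in particular, one must check that every maximal clique is realized at some clique point (here the Helly-type property for intervals is used) and that distinct maximal cliques are realized at distinct points (here the distinctness of endpoints is used). By contrast, (iii)$\Rightarrow$(i) is the conceptual core of the argument but is very short, resting only on the elementary fact that a union of intervals sharing a common point is again an interval.
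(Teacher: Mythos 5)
Your argument is correct. Note that the paper offers no proof of this statement at all---it is quoted as a known theorem of Roberts, Maehara, and Fishburn \cite{MR0252267,MR593976,MR776781}---so there is nothing internal to compare against; judged on its own, your cyclic chain (i)$\Rightarrow$(ii)$\Rightarrow$(iii)$\Rightarrow$(i) is sound. The two nontrivial steps check out: in (iii)$\Rightarrow$(i) the observation that $N_G[v]$ is a union of maximal cliques all containing the position of $v$, hence an interval, correctly reduces the claim to the characterization of proper interval graphs via augmented adjacency matrices that the paper itself recalls in the introduction; and in (i)$\Rightarrow$(ii) the clique points are well defined and pairwise distinct because the intervals $\bigl[\max_{v\in K}a_v,\min_{v\in K}b_v\bigr]$ are nonempty (Helly for intervals), nondegenerate (distinct endpoints), and pairwise disjoint (else two maximal cliques would coincide), while properness is exactly what makes the left- and right-endpoint orders agree so that each row is the intersection of an initial and a final segment. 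The only ingredient taken on faith is the normalization to distinct endpoints and the Roberts characterization, both standard and both consistent with how the paper treats this theorem as known background.
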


Characterizations of those graphs whose clique-matrices have the circular-ones property for columns, or for rows and columns are also known. Recall that, according to Theorem~\ref{thm:Gavril}, Helly circular-arc graphs are those graphs whose clique-matrices have the circular-ones property for columns. A \emph{proper Helly circular-arc graph}~\cite{MR2428582} is a graph admitting a circular-arc model which is simultaneously proper and Helly. The following analogue of the equivalence \eqref{it1:PIG}${}\Leftrightarrow{}$\eqref{it2:PIG} of Theorem~\ref{thm:PIG} was proved in~\cite{MR3030589}.

\begin{thm}[\cite{MR3030589}]\label{thm:PHCAcirc} A graph is a proper Helly circular-arc graph if and only if $Q(G)$ has the circular-ones property for rows and columns.\end{thm}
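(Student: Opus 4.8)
The plan is to prove both implications by translating between proper Helly circular-arc models of $G$ and pairs of mutually compatible circular orderings, one of the maximal cliques of $G$ and one of its vertices.

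For the ``only if'' part, fix a proper Helly circular-arc model $\{A_v:v\in V(G)\}$ of $G$; after a small perturbation we may assume all arc endpoints are pairwise distinct. Because the model is Helly, each maximal clique $K$ of $G$ is realized by a point of the circle: the arcs of $K$ pairwise intersect, so by the Helly property they share a point $p_K$, and the set of arcs through $p_K$ is a clique containing $K$, hence equals $K$ by maximality; distinct maximal cliques get distinct points. Ordering the rows of $Q(G)$ (the maximal cliques) by the cyclic position of the points $p_K$, the maximal cliques containing a fixed vertex $v$ are exactly those whose point lies in the arc $A_v$, so they form a circular interval; this is the circular-ones property for columns of $Q(G)$. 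For the row property I would invoke the standard fact that in a proper circular-arc model the arcs can be cyclically ordered so that their clockwise starting endpoints occur in that order and their ending endpoints also occur in that order; ordering the columns of $Q(G)$ (the vertices) by this cyclic order, the vertex set of a maximal clique $K$ is precisely the set of arcs through $p_K$, which is a consecutive block, giving the circular-ones property for rows of $Q(G)$.

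For the ``if'' part, suppose $Q(G)$ has the circular-ones property for rows and for columns, and fix a cyclic ordering $K_1,\dots,K_r$ of the maximal cliques witnessing the column property (each vertex's family of maximal cliques is a circular interval of indices) and a cyclic ordering $v_1,\dots,v_n$ of the vertices witnessing the row property (each maximal clique is a circular interval of vertices). By Theorem~\ref{thm:Gavril} the column property already gives that $G$ is a Helly circular-arc graph, so the task is to produce a model that is simultaneously proper. I would construct one directly: place points $p_1,\dots,p_r$ on a circle in the given cyclic order, and to each vertex $v$ assign an arc $A_v$ containing exactly the points $p_i$ with $v\in K_i$, putting the two endpoints of $A_v$ into the gaps flanking that block of points, but staggering the starting endpoints and the ending endpoints of the various arcs so that both follow the cyclic order $v_1,\dots,v_n$. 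Because the vertex set of each $K_j$ is a circular interval in $v_1,\dots,v_n$, the endpoints that must land in the gap next to $p_j$ can be placed in agreement with $v_1,\dots,v_n$, so the construction is consistent; taking all arcs sufficiently ``thin'' keeps each endpoint from crossing a point $p_i$ it should not cross.

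The remaining work is to check that the model so obtained is a proper Helly circular-arc model of $G$, and this is where I expect the main difficulty. Correctness reduces to showing $A_u\cap A_v\neq\emptyset$ exactly when $u$ and $v$ are adjacent: adjacency yields a common maximal clique $K_i$, whence $p_i\in A_u\cap A_v$, while non-adjacency means the clique-intervals of $u$ and $v$ are disjoint, and thinness keeps the arcs apart. Once correctness holds the Helly property is free, since pairwise intersecting arcs correspond to pairwise adjacent vertices, hence to a clique contained in some $K_i$, and all those arcs contain $p_i$. The delicate point is properness: one must forbid $A_u\subseteq A_v$ with $A_u\neq A_v$, and this genuinely fails for the naive choice of minimal arcs---already in $P_3$, where the set of maximal cliques containing an endpoint is properly contained in the set of maximal cliques containing the centre---so the staggering of endpoints along $v_1,\dots,v_n$ is essential and must be shown to cancel out exactly such containments of clique-intervals. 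An alternative would be to adapt the proof of the proper-interval analogue of Theorem~\ref{thm:PIG}, using a Tucker-reduction-style argument to pass to the consecutive-ones case and then wrapping a proper interval model around the circle, but the genuinely circular instances still reduce to the staggering argument above, which is the step I would expect to be the main obstacle.
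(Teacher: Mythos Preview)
The paper does not prove this theorem: it is quoted from \cite{MR3030589} and used as a black box (for example, in the proof of the final corollary of Section~\ref{sec:final}). So there is no proof in the paper to compare your argument against; what follows assesses your sketch on its own.

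Your ``only if'' direction is the standard construction and is essentially complete. The one step that deserves an extra line is the claim that, in the canonical cyclic order of a proper model, the arcs through a fixed point form a circular block; this is routine once the starts and the ends are both in that cyclic order.

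Your ``if'' direction is a reasonable plan, and you correctly locate the crux at the properness check, but as written there is a genuine gap exactly where you flag it. You want to place all left endpoints, and all right endpoints, in the cyclic order $v_1,\dots,v_n$ while keeping each arc over exactly its block of clique-points. In-gap staggering only helps when two arcs have an endpoint in the \emph{same} gap; it does nothing against the situation where the clique-interval of $u$ sits strictly inside that of $v$ with slack on both sides, for then the two left endpoints land in different gaps and so do the two right endpoints, and $A_u\subseteq A_v$ regardless of how you shuffle inside gaps. What is missing is a lemma excluding such strict nesting under the hypothesis. One route is to show that strict nesting with slack produces three pairwise nonadjacent neighbours $\alpha,u,\beta$ of $v$ (pick $\alpha$ in a clique just before $u$'s interval and $\beta$ in a clique just after, each nonadjacent to $u$); if additionally $\alpha\not\sim\beta$ this is an induced claw, whose three maximal cliques extend to three distinct maximal cliques of $G$, and the resulting $3\times 4$ submatrix of $Q(G)$ on columns $\alpha,u,\beta,v$ already fails the circular-ones property for rows. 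The case $\alpha\sim\beta$ requires a further (short) argument using the column property to push $\alpha$'s clique-interval outside $v$'s and obtain a contradiction. None of this is in your write-up; until that lemma is supplied and the endpoint placement is pinned down accordingly, the ``if'' direction is a plan rather than a proof.
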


We conclude this section observing that imposing the circular-ones property just for rows to the clique-matrix also leads to proper Helly circular-arc graphs; i.e., the analogy extends to assertion \eqref{it3:PIG} of Theorem~\ref{thm:PIG}. For that purpose, we rely on the result below. The \emph{claw} is the graph $\overline{C_3^*}$ and the \emph {$k$-wheel} is the graph that arises by adding a single vertex adjacent to every vertex of $C_k$.

\begin{thm}[\cite{MR2428582,MR0379298}]\label{thm:PHCAforb} A graph $G$ is a proper Helly circular-arc graph if and only if $G$ contains no induced claw, $4$-wheel, $5$-wheel, net, tent, $\overline{C_6}$, or $C_k^*$ for any $k\geq 4$.\end{thm}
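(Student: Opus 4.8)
The plan is to pass to the clique matrix. By Theorem~\ref{thm:PHCAcirc}, $G$ is a proper Helly circular-arc graph if and only if $Q(G)$ has the circular-ones property for rows and columns, and by Theorem~\ref{thm:circRC} this happens exactly when $Q(G)$ contains no matrix of $\mathcal S:=\ForbRowCol\cup\ForbRowCol\trans$ as a configuration. Writing $\mathcal H=\{\mathrm{claw},\,4\text{-wheel},\,5\text{-wheel},\,\mathrm{net},\,\mathrm{tent},\,\overline{C_6}\}\cup\{C_k^\ast:k\geq 4\}$ for the claimed family, it therefore suffices to prove that $Q(G)$ contains a configuration from $\mathcal S$ if and only if $G$ contains an induced subgraph isomorphic to some member of $\mathcal H$.

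For the ``if'' direction I would first isolate the elementary fact that if $H$ is an induced subgraph of $G$, then $Q(G)$ contains $Q(H)$ as a configuration: each maximal clique $C$ of $H$ extends to some maximal clique $C'$ of $G$, whence $C'\cap V(H)=C$ by the maximality of $C$ in $H$, so distinct maximal cliques of $H$ extend to distinct maximal cliques of $G$, and the rows of $Q(G)$ indexed by these cliques together with the columns indexed by $V(H)$ form $Q(H)$ up to permutations. Granting this, it remains to check, for each member of $\mathcal H$ (six sporadic graphs, plus the single family $C_k^\ast$ with $k\geq 4$, and up to transposition of the matrix), that its clique matrix already contains a configuration from $\mathcal S$: for instance $Q(C_k^\ast)$ contains $\MIast k$ for $k\geq 4$ and $Q(\mathrm{claw})$ contains $\overline{\MIast 3}$, while the $4$-wheel, $5$-wheel, net, tent, and $\overline{C_6}$ are disposed of by a bounded, routine verification. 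Thus an induced member of $\mathcal H$ in $G$ forces a configuration from $\mathcal S$ in $Q(G)$, so $G$ is not a proper Helly circular-arc graph.

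The substantive part --- and the main obstacle --- is the converse: from a configuration $F\in\mathcal S$ occurring in $Q(G)$ one must reconstruct an induced member of $\mathcal H$ inside $G$. One proceeds family by family, using that, up to transposition, $\mathcal S$ consists of the two infinite families $\{\MIast k\}$ and $\{\overline{\MIast k}\}$ together with ten sporadic matrices ($8$ of the form $a\miop\MIast{\vert a\vert}$ with $a\in\ARowCol$, plus $\MVast$ and $\overline{\MVast}$). When $F=\overline{\MIast k}$ occurs in $Q(G)$, its rows are maximal cliques and its columns are vertices, and an argument in the style of Lemma~\ref{lem:co--C_2k} shows that the vertices involved induce $\overline{C_{2k}}$ whenever that subgraph happens to be co-bipartite --- which gives $\overline{C_6}\in\mathcal H$ when $k=3$ and, for $k\geq 4$, an induced $4$-wheel (or $5$-wheel) extracted from $\overline{C_{2k}}$ by a short explicit computation --- while if it is not co-bipartite a case analysis in the spirit of the proof of Lemma~\ref{lem:main} produces one of net, tent, $\overline{C_6}$, claw, $4$-wheel, or $C_\ell^\ast$. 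The case $F=\MIast k$ is dual: the cyclic incidence pattern of $k$ maximal cliques on $k+1$ vertices forces an induced $C_k^\ast$ for large $k$ and collapses to net, tent, $\overline{C_6}$, the claw, or a small $C_\ell^\ast$ for small $k$. Finally, each of $\MVast$, $\overline{\MVast}$ and the eight matrices $a\miop\MIast{\vert a\vert}$ names at most seven vertices of $G$, so the required induced member of $\mathcal H$ is located by a direct, bounded case analysis of the adjacencies among them. The delicate points are that a configuration of $Q(G)$ records only incidences between certain maximal cliques and certain vertices --- so one must recover enough of the adjacency of $G$ on a bounded vertex set to actually name a graph in $\mathcal H$, possibly on a sub- or super-set of those vertices --- and that the two infinite families must be handled uniformly across the co-bipartite/non-co-bipartite dichotomy; here the fact that $Q(G)$ is a genuine clique matrix (no two rows comparable, no zero row or column) is what keeps the analysis finite.
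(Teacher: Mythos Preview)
This theorem is not proved in the paper; it is quoted from \cite{MR2428582,MR0379298} and used only as a black box in the proof of the final corollary. There is therefore no argument in the paper to compare your attempt against.

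As to the plan itself, the converse direction has a genuine gap. A configuration inside $Q(G)$ records only which of the selected vertices lie in which of the selected maximal cliques. Two columns that are both $1$ in some chosen row correspond to adjacent vertices, but two columns that are never simultaneously $1$ in the chosen rows may still be adjacent through an unchosen clique, so nonadjacency in $G$ cannot be read off. Concretely, if $\MIast k$ occurs in $Q(G)$ you obtain vertices $v_1,\ldots,v_k$ with each cyclically consecutive pair adjacent and a vertex $v_{k+1}$ outside the $k$ chosen cliques, but you learn nothing about chords among the $v_i$ or about edges from $v_{k+1}$; the claim that this ``forces an induced $C_k^\ast$'' is unsupported. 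Your appeal to Lemma~\ref{lem:co--C_2k} for $\overline{\MIast k}$ is also misplaced: that lemma concerns the augmented adjacency matrix $M(G)$, where both row and column indices are vertices of $G$, whereas rows of $Q(G)$ are maximal cliques. In fact, for $k\geq 5$ the $k+1$ column-vertices of an $\overline{\MIast k}$ configuration in $Q(G)$ are pairwise adjacent (any two of them share one of the $k$ chosen cliques), so no forbidden subgraph can live on that vertex set at all. Recovering a member of $\mathcal H$ from a clique-matrix configuration is the entire difficulty here, and the sketch does not supply it.
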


From the above results, the desired conclusion follows.

\begin{cor} A graph $G$ is a proper Helly circular-arc graph if and only if $Q(G)$ has the circular-ones property for rows.\end{cor}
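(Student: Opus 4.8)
The plan is to combine the forbidden induced subgraph characterization of proper Helly circular-arc graphs (Theorem~\ref{thm:PHCAforb}) with our characterization of the circular-ones property for rows by the submatrices in $\ForbRow$ (Theorem~\ref{thm:circR}). The \textquoteleft only if\textquoteright\ direction is immediate from Theorem~\ref{thm:PHCAcirc}: if $G$ is a proper Helly circular-arc graph, then $Q(G)$ has the circular-ones property for rows and columns, hence in particular for rows. All the content lies in the \textquoteleft if\textquoteright\ direction, so assume $Q(G)$ has the circular-ones property for rows; by Theorem~\ref{thm:PHCAforb} it suffices to prove that $G$ contains no induced claw, $4$-wheel, $5$-wheel, net, tent, $\overline{C_6}$, or $C_k^*$ with $k\geq 4$, and Theorem~\ref{thm:PHCAforb} will then yield that $G$ is a proper Helly circular-arc graph.

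The key step is the following observation about clique-matrices: if $H$ is an induced subgraph of $G$, then $Q(G)$ contains $Q(H)$ as a configuration. Indeed, every maximal clique $C$ of $H=G[V(H)]$ is a clique of $G$ that is maximal in $G[V(H)]$, so $C$ extends to a maximal clique $K_C$ of $G$ with $K_C\cap V(H)=C$ (every vertex of $V(H)\setminus C$ has a non-neighbour in $C$ and so cannot lie in the clique $K_C\supseteq C$). As $C$ ranges over the maximal cliques of $H$, the cliques $K_C$ are pairwise distinct and hence index pairwise distinct rows of $Q(G)$, and the submatrix of $Q(G)$ on these rows and the columns $V(H)$ is exactly $Q(H)$. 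Since containment as a configuration is transitive (compose the row and column maps), it follows that if $Q(H)$ contains a matrix of $\ForbRow$ as a configuration for some induced subgraph $H$ of $G$, then so does $Q(G)$, contradicting Theorem~\ref{thm:circR}. Hence it is enough to check, for each graph $H$ in the forbidden list of Theorem~\ref{thm:PHCAforb}, that $Q(H)$ does not have the circular-ones property for rows.

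For the infinite family $H=C_k^*$ with $k\geq 4$ this is uniform: $C_k$ is triangle-free, so its maximal cliques are its $k$ edges, and the rows of $Q(C_k^*)$ indexed by those edges form, on all $k+1$ columns, a copy of $\MI k$ together with one all-zero column, i.e.\ of $\MIast k=00\ldots0\miop\MIast k$, which belongs to $\ForbRow$ since $00\ldots0\in A_k$; thus $Q(C_k^*)$ lacks the circular-ones property for rows by Theorem~\ref{thm:circR}. For the six fixed graphs claw, $4$-wheel, $5$-wheel, net, tent, $\overline{C_6}$ one verifies this directly, each clique-matrix being tiny: for the claw the centre would have to be cyclically adjacent to all three leaves; for the $4$- and $5$-wheels the hub cannot be fitted into the essentially unique cyclic arrangement of the rim cliques; and for the net, the tent, and $\overline{C_6}$ a short case analysis rules out the required cyclic column order, since a triangle among the maximal cliques forces its three vertices into a cyclic block and this leaves no room for the remaining cliques. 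This gives the \textquoteleft if\textquoteright\ direction and finishes the proof.

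I expect the inspection of the six small clique-matrices to be the only tedious part. The one step that needs genuine care is the extension observation of the second paragraph: it uses crucially that $H$ is an \emph{induced} subgraph of $G$ and that the rows of a clique-matrix correspond to \emph{maximal} cliques; the analogous statement fails for arbitrary submatrices of arbitrary binary matrices, so one cannot simply transfer forbidden submatrices along induced subgraphs in general.
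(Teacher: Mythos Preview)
Your proof is correct and follows essentially the same route as the paper's: both argue the nontrivial direction by showing that each forbidden induced subgraph $H$ from Theorem~\ref{thm:PHCAforb} has $Q(H)$ failing the circular-ones property for rows, and then use the fact (which you spell out more carefully than the paper does) that $Q(H)$ sits in $Q(G)$ as a configuration whenever $H$ is induced in $G$. The paper's handling of the sporadic cases is slightly more concrete than your sketched case analyses: it cites~\cite{MR3030589} for the claw, $4$-wheel, $5$-wheel, and tent, and observes explicitly that $Q(\mbox{net})$ and $Q(\overline{C_6})$ each contain $\MIV$ as a configuration, which would tighten your argument for those two graphs.
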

\begin{proof} By Theorem~\ref{thm:PHCAcirc}, it suffices to prove that if $G$ is not a proper Helly circular-arc graph then $Q(G)$ does not have the circular-ones property for rows. Thus, we assume that $G$ is not a proper Helly circular-arc graph. By Theorem~\ref{thm:PHCAforb}, $G$ contains an induced subgraph $H$ which is isomorphic to claw, $4$-wheel, $5$-wheel, net, tent, $\overline{C_6}$, or $C_k^*$ for some $k\geq 4$. On the one hand, as observed in~\cite{MR3030589}, if $H$ is isomorphic to claw, $4$-wheel, $5$-wheel, or tent, then $Q(H)$ does not have the circular-ones property for rows. On the other hand, if $H$ is isomorphic to net, $\overline{C_6}$, or $C_k^*$ for some $k\geq 4$, then it is also the case that $Q(H)$ does not have the circular-ones property for rows because each of $Q(\mbox{net})$ and $Q(\overline{C_6})$ contains $\MIV$ as a configuration and $Q(C_k^*)$ contains $\MIast k$ as a configuration for each $k\geq 4$. Since $H$ is an induced subgraph of $G$, $Q(H)$ is contained in $Q(G)$ as a configuration. As $Q(H)$ does not have the circular-ones property for rows, $Q(G)$ does not have the circular-ones property for rows.\end{proof}

\section*{Acknowledgements}

This work was partially supported by ANPCyT PICT 2012-1324 and CONICET PIO 14420140100027CO.

\end{document}